\newcounter{dummy}
\newcommand\myitem[1][]{\item[#1]\refstepcounter{dummy}\def\@currentlabel{#1}}   
\def \<{\langle}
\def \>{\rangle}
\newcommand{\RR}{\mathbb{R}}
\newcommand{\NN}{\mathbb{N}}
\newcommand{\PP}{\mathbb{P}}
\newcommand{\calF}{\mathcal{F}}
\newcommand{\W}{\mathrm{W}}
\newcommand{\id}{\mathrm{id}}
\newcommand{\im}{\mathrm{im}}
\newcommand{\eps}{\varepsilon}
\newcommand{\dprod}{d_{H\times Z}}
\DeclareMathOperator{\Int}{Int}
\DeclareMathOperator{\diam}{diam}
\DeclareMathOperator{\Isom}{Isom}
\DeclareMathOperator{\Susp}{Susp}
\DeclareMathOperator{\argmin}{argmin}
\DeclareMathOperator{\supp}{supp}
\DeclareMathOperator{\Conv}{Conv}
\DeclareMathOperator{\Geo}{Geod}
\DeclareMathOperator{\GeodSel}{GeodSel}
\DeclareMathOperator{\Opt}{Opt}
\DeclareMathOperator{\Mid}{Mid}
\DeclareMathOperator{\proj}{proj}
\DeclareMathOperator{\argmax}{argmax}
\DeclareMathOperator{\Cut}{Cut}
\DeclareMathOperator{\RCD}{RCD}
\newcommand{\ppi}{\boldsymbol{\pi}}
\numberwithin{equation}{section}
\numberwithin{table}{section}
\newtheorem{thm}{Theorem}
\newtheorem{cond}{Condition}
\newtheorem{theorem}{Theorem}[section]
\newtheorem{lemma}[theorem]{Lemma}
\newtheorem{proposition}[theorem]{Proposition}
\newtheorem{claim}[theorem]{Claim}
\newtheorem{corollary}[theorem]{Corollary}
\newtheorem{cor}[thm]{Corollary}
\theoremstyle{definition}
\newtheorem{definition}[theorem]{Definition}
\newtheorem{remark}[theorem]{Remark}
\newtheorem*{ack}{Acknowledgements}
\def\@setthanks{\vspace{-\baselineskip}\def\thanks##1{\@par##1}\thankses}
\title{Isometric Rigidity of Metric Constructions with respect to Wasserstein Spaces}
\author[Che]{Mauricio Che}
\address[Che]{Department of Mathematical Sciences, Durham University, United Kingdom}
\email{mauricio.a.che-moguel@durham.ac.uk}
\author[Galaz-Garc\'ia]{Fernando Galaz-Garc\'ia}
\address[Galaz-Garc\'ia]{Department of Mathematical Sciences, Durham University, United Kingdom}
\email{fernando.galaz-garcia@durham.ac.uk}
\author[Kerin]{Martin Kerin}
\address[Kerin]{Department of Mathematical Sciences, Durham University, United Kingdom}
\email{martin.p.kerin@durham.ac.uk}
\author[Santos-Rodr\'iguez]{Jaime Santos-Rodr\'iguez}
\address[Santos-Rodr\'iguez]{Departamento de Matem\'atica Aplicada, Universidad Polit\'ecnica de Madrid, Spain}
\email{jaime.santos@upm.es}
\begin{document}

\begin{abstract}
    In this paper we study the isometric rigidity of certain classes of metric spaces with respect to the $p$-Wasserstein space. We prove that spaces that split a separable Hilbert space are not isometrically rigid with respect to $\PP_2$. We then prove that infinite rays are isometrically rigid with respect to $\PP_p$ for any $p\geq 1$, whereas taking infinite half-cylinders (i.e.\ product spaces of the form $X\times [0,\infty)$) over compact non-branching geodesic spaces preserves isometric rigidity with respect to $\PP_p$, for $p>1$. Finally, we prove that spherical suspensions over compact spaces with diameter less than $\pi/2$ are isometrically rigid with respect to $\PP_p$, for $p>1$.
\end{abstract}

\maketitle

\setcounter{tocdepth}{1}
\tableofcontents

\section{Introduction}
Given a metric space $X$ and a real parameter $p\geq 1$, the space $\PP_p(X)$ of Borel probability measures on $X$ with finite $p$-moment is equipped with the \emph{$p$-Wasserstein metric} $\W_p$. A natural question arises: \emph{How do the isometries of $\PP_p(X)$ relate to those of $X$?} This question has been addressed in different contexts (see, for example, \cite{BertrandKloeckner2012,BertrandKloeckner2016,GTV20, GeherTitkosVirosztek2022,GTV2023,kloeckner10,Santos2022}). While each isometry of $X$ canonically induces an isometry of $\PP_p(X)$, it is unclear whether $\PP_p(X)$ admits additional, \emph{exotic} isometries. In this paper, we provide both positive and negative answers to this question. We present examples of spaces where exotic isometries exist, as well as examples where $X$ and $\PP_p(X)$ have the same isometry group, a property we term \emph{isometric rigidity of $X$ with respect to $\PP_p$}.

First, we show the existence of exotic isometries in $\PP_2(X)$ for a broad class of non-compact metric measure spaces.

\begin{thm}
    \label{t:flexibility of spaces with lines}
    Let $(Y, d_Y)$ be a proper metric space and $(H,|\cdot|)$ be a separable Hilbert space with $\dim H\geq 1$. Let the product $X = H \times Y$ be equipped with the product metric 
    \[
    d((h_1,y_1), (h_2, y_2)) = \left(|h_1 - h_2|^2 + d_Y(y_1, y_2)^2\right)^\frac{1}{2}.
    \]
    Then $(\PP_2(X), \W_2)$ admits an isometric action of the orthogonal group of $H$ by exotic isometries.
\end{thm}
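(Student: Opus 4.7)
The plan is to lift Kloeckner's exotic action of $\OO(H)$ on $\PP_2(H)$ to the product $X = H \times Y$ via a fiberwise construction, exploiting the linear structure of $H$.

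First, I recall the action on $\PP_2(H)$. For $U \in \OO(H)$ and $\rho \in \PP_2(H)$ with barycenter $b(\rho) := \int_H h \, d\rho(h)$, one sets $\hat{U}(\rho) := (\Phi_{U,b(\rho)})_* \rho$, where $\Phi_{U,c}(h) := U(h-c) + c$ rotates $H$ by $U$ about $c$. A direct computation, using the linearity of $U$ together with the identity $\int(h_1-h_2)\,d\pi = b(\rho_1) - b(\rho_2)$ valid for any coupling $\pi$ of $\rho_1, \rho_2$, shows that pushing an optimal coupling forward via $\Phi_{U,b(\rho_1)} \times \Phi_{U,b(\rho_2)}$ produces a coupling of $\hat U(\rho_1), \hat U(\rho_2)$ with the same $L^2$ cost; hence $\W_2(\hat U \rho_1, \hat U \rho_2) = \W_2(\rho_1, \rho_2)$. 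Since every Dirac mass is fixed by $\hat U$, yet $\hat U \neq \id$ whenever $U \neq I$ (apply $\hat U$ to a two-atom measure not invariant under $U$), the action on $\PP_2(H)$ is exotic.

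To lift this action to $X$, I disintegrate each $\mu \in \PP_2(X)$ over its $Y$-marginal $\nu := (\pi_Y)_*\mu$ to obtain a family $\{\mu_y\}_{y \in Y}$ of probability measures on $H$. Properness of $Y$ and separability of $H$ ensure that $X$ is Polish, hence the disintegration exists and is essentially unique; a Fubini-type argument then gives $\mu_y \in \PP_2(H)$ for $\nu$-a.e.\ $y$. I define
\[
\Theta_U(\mu) := \int_Y \hat{U}(\mu_y) \otimes \delta_y \, d\nu(y);
\]
equivalently, $\Theta_U(\mu)$ is the pushforward of $\mu$ under the ($\mu$-dependent) map $(h,y) \mapsto (\Phi_{U, b_\mu(y)}(h), y)$, where $b_\mu(y) := b(\mu_y)$. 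The isometry property of $\Theta_U$ will then follow from the standard gluing identity
\[
\W_2^2(\mu^1,\mu^2) = \inf_{\sigma \in \Pi(\nu^1,\nu^2)} \int_{Y \times Y} \left[\W_2^2(\mu^1_{y_1}, \mu^2_{y_2}) + d_Y(y_1,y_2)^2\right] d\sigma(y_1,y_2),
\]
combined with the first step: $\Theta_U$ preserves the $Y$-marginal and replaces each conditional $\mu^i_y$ by $\hat U(\mu^i_y)$, leaving the right-hand side invariant.

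Finally, $\Theta_U$ will be exotic for every $U \neq I$: it fixes every Dirac mass $\delta_{(h,y)}$, so any isometry $\varphi$ of $X$ inducing $\Theta_U$ would have to fix every point of $X$, forcing $\varphi = \id_X$ and hence $\Theta_U = \id$, a contradiction (since $\Theta_U$ does not fix a generic two-atom measure supported in a single fiber $H \times \{y\}$). The principal technical obstacle is verifying the gluing identity and the measurability of $y \mapsto b_\mu(y)$ and $y \mapsto \hat U(\mu_y)$ needed to make sense of $\Theta_U(\mu)$; both rest on standard properties of disintegration on Polish spaces together with the existence of optimal $\W_2$-couplings.
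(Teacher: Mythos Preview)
Your fiberwise construction does not produce an isometry, because the ``gluing identity'' you rely on is false. The inequality $\W_2^2(\mu^1,\mu^2)\leq \inf_\sigma\int\bigl[\W_2^2(\mu^1_{y_1},\mu^2_{y_2})+d_Y(y_1,y_2)^2\bigr]\,d\sigma$ holds by gluing fiberwise couplings, but the reverse inequality fails: when you disintegrate an optimal plan $\pi$ over its $Y\times Y$-marginal, the first $H$-marginal of the conditional $\pi_{(y_1,y_2)}$ depends on $y_2$ as well and need not equal $\mu^1_{y_1}$. Concretely, take $Y=\{a,b\}$ with $d_Y(a,b)=1$, $H=\RR$, $U=-1\in\OO(1)$, and
\[
\mu^1=\tfrac{1}{3}\delta_{(0,a)}+\tfrac{2}{3}\delta_{(3,a)},\qquad \mu^2=\tfrac{1}{3}\delta_{(0,a)}+\tfrac{2}{3}\delta_{(3,b)}.
\]
Then $\W_2^2(\mu^1,\mu^2)=\tfrac{2}{3}$ (send $(0,a)\mapsto(0,a)$ and $(3,a)\mapsto(3,b)$). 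Your map fixes $\mu^2$ (each fiber is a Dirac) but sends $\mu^1$ to $\tfrac{1}{3}\delta_{(4,a)}+\tfrac{2}{3}\delta_{(1,a)}$ (reflection about the fiber barycenter $2$), and one computes $\W_2^2(\Theta_{-1}\mu^1,\Theta_{-1}\mu^2)=\tfrac{8}{3}$. So $\Theta_U$ is not distance-preserving.

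The fix, and what the paper does, is to rotate about a \emph{single} center: the barycenter of the global $H$-marginal $\mu_H=(p^H)_\#\mu$, not the fiberwise barycenters $b(\mu_y)$. One sets $\Psi_\psi(\mu)=(\tau_{\mu_H}\circ\psi\circ\tau_{\mu_H}^{-1}\circ p^H,\,p^Y)_\#\mu$. Because every fiber is rotated about the \emph{same} point, the $H$-cost term in the Pythagorean expansion of $d^2$ separates cleanly, and the identity $\int|h_1-h_2|^2\,d\pi^H=\int|\Phi_{\psi,b(\mu_H)}(h_1)-\Phi_{\psi,b(\nu_H)}(h_2)|^2\,d\pi^H$ (your ``first step'', applied to the $H$-marginal coupling $\pi^H$) gives the isometry directly---no disintegration or gluing lemma is needed.
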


Theorem~\ref{t:flexibility of spaces with lines} generalises earlier results about Euclidean and Hilbert spaces \cite{kloeckner10,GeherTitkosVirosztek2022}, and provides an infinite family of metric spaces that are not isometrically rigid with respect to $\PP_2$.

A prototypical example satisfying the hypotheses of theorem~\ref{t:flexibility of spaces with lines} is a finite-dimensional, non-negatively curved $\RCD$ space containing a line. The class of non-negatively curved $\RCD$ spaces contains finite-dimensional Alexandrov spaces with non-negative curvature and complete finite-dimensional Riemannian manifolds with non-negative Ricci curvature, with the volume measure as the reference measure. It is well known that such spaces split as a metric product with a Euclidean factor whenever they contain lines (see \cite{CheegerGromoll1971} for Riemannian manifolds,  \cite{milka67} or \cite[Section 10.5]{BBI} for Alexandrov spaces, and \cite{gigli2013,Gigli2018} for $\RCD$ spaces) and that this splitting induces a corresponding splitting of the $2$-Wasserstein space (see \cite{TakatsuYokota2012}, \cite{mitsuishi10} for the Alexandrov setting). 

\begin{cor}\label{c:flexibility of Alexandrov spaces with lines}
    Let $(X,d)$ be a finite-dimensional, non-negatively curved $\RCD$ space that contains a line.  Then $(\PP_2(X), \W_2)$ has an exotic isometry.
\end{cor}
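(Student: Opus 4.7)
The plan is to reduce Corollary~\ref{c:flexibility of Alexandrov spaces with lines} to Theorem~\ref{t:flexibility of spaces with lines} via the Cheeger--Gromoll-type splitting theorem in the $\RCD$ setting. The main input is Gigli's splitting theorem \cite{gigli2013, Gigli2018}, which asserts that any $\RCD(0,N)$ space containing a line splits isometrically and measure-theoretically as a metric product $\RR\times X'$, where $X'$ is again a finite-dimensional non-negatively curved $\RCD$ space. Since $X$ is finite-dimensional, one can iterate this splitting at most finitely many times until the remaining factor $Y$ contains no line, obtaining an isometric decomposition
\[
X \;\cong\; \RR^k \times Y
\]
with $k\geq 1$ and $Y$ a finite-dimensional $\RCD(0,N-k)$ space with no lines.

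Next, I would verify the hypotheses of Theorem~\ref{t:flexibility of spaces with lines}. The factor $H=\RR^k$ is a (finite-dimensional, hence separable) Hilbert space with $\dim H \geq 1$. The factor $Y$ is a finite-dimensional $\RCD$ space; in particular it is complete and locally compact and satisfies a Bishop--Gromov-type volume comparison, which together imply that $Y$ is a \emph{proper} metric space. Moreover, the product metric on $\RR^k\times Y$ arising from the splitting theorem coincides with the Pythagorean product metric appearing in the statement of Theorem~\ref{t:flexibility of spaces with lines}.

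With these hypotheses in place, Theorem~\ref{t:flexibility of spaces with lines} applied to $X=H\times Y = \RR^k\times Y$ yields an isometric action of $\OO(k)$ on $(\PP_2(X), \W_2)$ by exotic isometries. Since $k\geq 1$, the group $\OO(k)$ is non-trivial, so at least one of these isometries is a non-identity exotic isometry of $\PP_2(X)$.

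The argument is essentially a direct combination of two known facts, so no serious obstacle is expected. The only point requiring care is confirming that the splitting produced by Gigli's theorem is genuinely a metric product with the Pythagorean distance (so as to match the hypothesis of Theorem~\ref{t:flexibility of spaces with lines}) and that the non-line factor $Y$ is proper; both of these are standard consequences of the structure theory of finite-dimensional $\RCD$ spaces.
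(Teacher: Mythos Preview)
Your proposal is correct and follows essentially the same approach as the paper: the corollary is presented there as an immediate consequence of Theorem~\ref{t:flexibility of spaces with lines} together with the splitting theorem for non-negatively curved $\RCD$ spaces (Gigli), exactly as you outline. The only minor remark is that iterating the splitting is not strictly necessary---a single application already gives $X\cong \RR\times X'$ with $X'$ a finite-dimensional $\RCD(0,N-1)$ space (hence proper), which suffices to invoke Theorem~\ref{t:flexibility of spaces with lines} with $H=\RR$.
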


Our second main result complements results in \cite{kloeckner10} and \cite{GTV20}, which fully characterise the isometry groups of Wasserstein spaces over the real line. It also shows that the presence of a ray alone is  not enough  to guarantee the existence of exotic isometries.

\begin{thm}\label{t:rigidity of rays}
The ray $[0,\infty)$ is isometrically rigid with respect to $\PP_p$ for any $p\geq 1$.   
\end{thm}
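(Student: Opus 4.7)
The plan is to show that every isometry $\Phi$ of $\PP_p([0,\infty))$ is the identity, which proves isometric rigidity since $\Isom([0,\infty))=\{\id\}$.

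I would first establish $\Phi(\delta_0) = \delta_0$ by characterising $\delta_0$ as the unique measure in $\PP_p([0,\infty))$ that is not a non-trivial midpoint. If $\delta_0$ is the midpoint of $\nu_1 \neq \nu_2$, displacement interpolation forces the midpoints of an optimal coupling to lie at $0$, and the non-negativity of $[0,\infty)$ then gives $\nu_1 = \nu_2 = \delta_0$; conversely, any $\mu \neq \delta_0$ has positive mass on $(0,\infty)$ which can be shifted by $\pm\varepsilon$ to produce a non-trivial midpoint decomposition of $\mu$.

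Next, I would show that $\Phi$ preserves, and then fixes, the set of Dirac masses. Since the geodesic from $\delta_0$ to any $\mu$ is the scaling curve $t \mapsto (s_t)_*\mu$ with $s_t(y) = ty$ (and this geodesic is unique at least for $p>1$), $\Phi$ commutes with the scalings $s_t$. Combined with a metric characterisation of Dirac masses---for instance via the quantile embedding $\PP_p([0,\infty)) \hookrightarrow L^p([0,1])$, whose image is the cone $\calC = \{f \in L^p([0,1]) : f \geq 0,\ f \text{ non-decreasing}\}$ and in which Dirac masses correspond to constant functions, distinguished as the extremal rays along which the H\"older ratio $\int f / \|f\|_p$ attains its maximum---this forces $\Phi$ to preserve the Diracs. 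The induced isometry $f\colon [0,\infty)\to[0,\infty)$ fixes $0$, and hence $f = \id$.

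Finally, to deduce $\Phi = \id$, for $p=1$ the function $x \mapsto \W_1(\mu,\delta_x) = \int_0^\infty |y-x|\,d\mu(y)$ already determines $\mu$ among non-negative measures, so $\Phi$ fixing all Diracs suffices. For $p>1$, I would further show that $\Phi$ also fixes the two-point measures $c\delta_a + (1-c)\delta_b$, by an analogous tangent-cone-type analysis combined with matching of the $p$-th moment profile, and then use them as test measures: the identity
\[
\W_p(\mu, c\delta_a + (1-c)\delta_b)^p = \int_0^c |F_\mu^{-1}(s)-a|^p\,ds + \int_c^1 |F_\mu^{-1}(s)-b|^p\,ds,
\]
upon matching the $a$-derivative and varying $c \in (0,1)$, yields $F_\mu^{-1}(c) = F_{\Phi(\mu)}^{-1}(c)$ for almost every $c$, hence $\Phi = \id$.

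The main obstacle is the second step: producing a clean metric characterisation of Dirac masses uniformly in $p \geq 1$. This is particularly delicate for $p=2$ because $\PP_2([0,\infty))$ embeds isometrically in Hilbert space, so the standard parallelogram-type characterisations trivialise and one is forced to use finer structure of the quantile image cone $\calC$, for instance the H\"older-ratio characterisation of extremal rays sketched above.
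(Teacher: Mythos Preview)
Your outline correctly isolates the three stages (fix $\delta_0$; fix all Diracs; recover general $\mu$), and your endgame for $p=1$ via $x\mapsto\W_1(\mu,\delta_x)$ is essentially what the paper invokes as well. The genuine gap is exactly where you flag it: Step~2.

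The difficulty is that neither ingredient you propose for characterising Diracs is actually metric. Commutation with scalings $(s_t)_\#$ follows from uniqueness of the geodesic $\delta_0\to\mu$ for $p>1$, but it only tells you that $\Phi$ permutes the rays through $\delta_0$; it does not single out which rays are Dirac rays. Your H\"older-ratio criterion $\int f/\|f\|_p$ involves $\int f=\int x\,d\mu$, which is \emph{not} an a~priori metric quantity of $\PP_p$ (you cannot express the mean via Wasserstein distances to points before knowing that Diracs are preserved, which is circular). Likewise ``extremal ray'' is an affine notion; to use it you would need an affine extension of $\Phi$ to $L^p$, i.e.\ Mankiewicz, at which point you are already done for $p\neq2$ and the argument says nothing for $p=2$. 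So as written the second step does not close.

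The paper avoids this obstacle by a different, purely metric route: it first characterises the two-parameter family $\Sigma=\{(1-\lambda)\delta_0+\lambda\delta_x\}$ as exactly those $\mu$ for which every maximal ray through $\mu$ starts at $\delta_0$ (Lemma~\ref{l:sigma}). Restricting to the sphere $\Sigma_1=\{\mu\in\Sigma:\W_p(\mu,\delta_0)=1\}$ and computing distances explicitly, one shows $\delta_1$ is the unique element of $\Sigma_1$ for which every distance to another point of $\Sigma_1$ is realised twice, hence $\delta_1$ is fixed; non-branching then fixes all of $\Delta_1$. This replaces your missing metric characterisation of Diracs by a metric characterisation of the larger set $\Sigma$ together with a rigidity computation inside it. For the final step the paper does \emph{not} use your two-point test-measure identity uniformly: it treats $p=2$ via $\overline{\Conv(\Delta_2)}=\PP_2$, $p>1,\,p\neq2$ via Mankiewicz, and $p=1$ via an adjacency argument adapted from \cite{GTV20}. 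Your proposed unified Step~3 for $p>1$ is plausible but would still require an independent proof that $\Delta_2$ is fixed, which you leave as a vague ``tangent-cone-type analysis''.
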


We then prove that taking half-cylinders over compact, non-branching, geodesic spaces that satisfy a technical assumption (see Condition \ref{cond:manifold point}) preserves isometric rigidity with respect to $\PP_p$ for any $p>1$.

\begin{thm}\label{t:rigidity of cylinders}
Let $(X,d)$ be a compact non-branching geodesic metric space satisfying Condition \ref{cond:manifold point}, and $p,q>1$. Then, if $X$ is isometrically rigid with respect to $\PP_p$ then $X\times_q [0,\infty)=(X\times [0,\infty),d_q)$ is isometrically rigid with respect to $\PP_p$, where the metric $d_q$ is given by
\[
d_q((x_1,t_1),(x_2,t_2)) = (d(x_1,x_2)^q+|t_1-t_2|^q)^{1/q}.
\]
\end{thm}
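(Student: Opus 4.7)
The plan is to take an arbitrary $\Phi \in \Isom(\PP_p(X\times_q[0,\infty)))$ and show that $\Phi=\Phi_\phi$ for some $\phi \in \Isom(X\times_q[0,\infty))$. Following the standard two-step strategy for Wasserstein rigidity, I would first restrict $\Phi$ to Dirac masses to obtain a base isometry $\phi$, and then show that $\Phi$ is the canonical lift of $\phi$. By non-branching of $X$ and of $[0,\infty)$, the product $X\times_q[0,\infty)$ is non-branching, and displacement interpolation identifies $\PP_p$-geodesics with transports along base geodesics; this makes an intrinsic characterisation of Dirac measures possible (for instance as extremal endpoints of every $\PP_p$-geodesic on which they lie, or via minimality of support), so $\Phi$ preserves the set of Diracs and yields a map $\phi$ via $\Phi(\delta_z)=\delta_{\phi(z)}$.

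Once Diracs are preserved, the map $\phi$ is automatically an isometry of the base space, and the heart of the argument is showing that $\phi(x,t)=(\psi(x),t)$ for some $\psi\in\Isom(X)$. The slice $X\times\{0\}$ is intrinsically the topological boundary of $X\times_q[0,\infty)$ and is therefore preserved. Condition~\ref{cond:manifold point} then supplies a point in $X$ with manifold-like local structure; at such a point, one can distinguish the \emph{ray direction} from the \emph{$X$-directions} via infinitesimal behaviour, ruling out isometries that mix the two factors. Combined with the fact that $[0,\infty)$ admits only the identity as an isometry, this forces the claimed product splitting of $\phi$.

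Finally, setting $\Psi := \Phi_\phi^{-1}\circ \Phi$ yields an isometry of $\PP_p(X\times_q[0,\infty))$ fixing every Dirac. A naive density argument does not suffice here, since isometries of Wasserstein spaces may act non-trivially while fixing all Diracs (cf.\ Theorem~\ref{t:flexibility of spaces with lines}). Instead, I would use a fibration argument: for each $t\geq 0$, the $\PP_p$-geometry of measures supported in $X\times\{t\}$ captures that of $\PP_p(X)$, so the rigidity of $X$ with respect to $\PP_p$ rules out exotic horizontal behaviour, while Theorem~\ref{t:rigidity of rays} rules out exotic vertical behaviour along the ray. The main obstacle will be the splitting of $\phi$ in the second step: because $d_q$ is a genuine $q$-product rather than an $\ell^2$-sum, and since $p$ and $q$ may differ, one cannot directly factorise $\PP_p(X\times_q[0,\infty))\cong \PP_p(X)\times\PP_p([0,\infty))$, so the splitting must be recognised purely metrically, through a careful combination of non-branching geometry, boundary detection of $X\times\{0\}$, and the infinitesimal structure supplied by Condition~\ref{cond:manifold point}.
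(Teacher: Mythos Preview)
Your outline has the right overall shape (reduce to an isometry fixing Diracs, then show that isometry is the identity), but two of the key mechanisms are mis-identified.

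First, your reading of Condition~\ref{cond:manifold point} is off. It does not assert the existence of a ``manifold-like'' point where one can do infinitesimal analysis to separate the ray direction from the $X$-directions. Rather, it says that for any finite set $\{x_1,\dots,x_N\}\subset X$ there is a point $x_o$ such that every geodesic $[x_ix_o]$ extends \emph{past} $x_o$. In the paper this is used not to split $\phi$, but to prove invariance of atomic measures: given $\mu=\sum\lambda_i\delta_{(x_i,t_i)}$, one shows that $\mu$ lies on a geodesic through a measure in $I(\delta_{(x_o,0)})\cong\PP_p([0,\infty))$, and invariance of such measures (via Theorem~\ref{t:rigidity of rays}) propagates to $\mu$ by non-branching. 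Your proposed use of the condition---an infinitesimal splitting argument---has no obvious implementation, and in any case the splitting $\phi(x,t)=(\psi(x),t)$ is \emph{not} where the difficulty lies.

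Second, the characterisations you propose for Diracs and for the base slice are not the ones that work. ``Topological boundary'' is not a metric invariant visible from $\PP_p$; the paper instead characterises $\PP_p(X\times\{0\})$ as the set of measures not lying in the interior of any maximal ray in $\PP_p$. Diracs are not characterised by being ``extremal endpoints'' (every Dirac is an interior point of many $\PP_p$-geodesics), but by the fact that the fibre $I(\delta_{(x,0)})=\{\mu:(T_0)_\#\mu=\delta_{(x,0)}\}$ is \emph{geodesic} as a metric subspace precisely when the base measure is a Dirac. Once this is in place, the splitting of $\phi$ is immediate: $\Phi$ preserves each $I(\delta_{(x,0)})\cong\PP_p([0,\infty))$, and Theorem~\ref{t:rigidity of rays} forces $\Phi(\delta_{(x,t)})=\delta_{(\psi(x),t)}$.

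The actual hard step is the one you wave at in your last paragraph: showing that $\Psi\in K$ fixes every measure. Your ``fibration argument'' is too vague here, since generic measures are not supported on a single slice $X\times\{t\}$. The paper handles this by showing (using Condition~\ref{cond:manifold point} as above) that $\Delta_n$ is $\Psi$-invariant, and then, for atomic $\mu$ with distinct heights and weights, comparing the unique closest points to $\mu$ and to $\Psi(\mu)$ in each fibre $I(\delta_{(z,0)})$ and in $\PP_p(X\times\{0\})$ (the latter using rigidity of $X$) to pin down all atoms.
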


Riemannian manifolds satisfy Condition \ref{cond:manifold point} (see Remark \ref{rem:manifolds and condition a}). Moreover, as shown in \cite{Santos2022}, closed, positively curved Riemannian manifolds are isometrically rigid with respect to $\PP_2$, while compact rank one symmetric spaces (CROSSes) are isometrically rigid with respect to $\PP_p$ for any $p>1$. This leads to the following result.

\begin{cor}\label{c:cylinders over manifolds}
Let $X$ be closed, positively curved Riemannian manifold. Then $X\times_q [0,\infty)$ is isometrically rigid with respect to $\PP_2$ for any $q>1$. Further, if $X$ is a CROSS, then $X\times_q [0,\infty)$ is isometrically rigid with respect to $\PP_p$ for any $p>1$.
\end{cor}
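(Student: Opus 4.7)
The plan is to derive this statement as a direct application of Theorem~\ref{t:rigidity of cylinders}, combined with the isometric rigidity results for positively curved manifolds and CROSSes from \cite{Santos2022}. The work consists almost entirely of verifying that the hypotheses of Theorem~\ref{t:rigidity of cylinders} hold in these settings.

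First, I would check that every closed, positively curved Riemannian manifold $X$ meets the structural assumptions on the base factor. Compactness is immediate from the word ``closed'', the Hopf--Rinow theorem supplies a geodesic metric structure, and the non-branching property follows from the fact that a Riemannian geodesic is uniquely determined by any of its velocities, so two geodesics coinciding on a nontrivial interval must be identical. Finally, Condition~\ref{cond:manifold point} is satisfied by virtue of Remark~\ref{rem:manifolds and condition a}, which explicitly records that all Riemannian manifolds meet this technical requirement.

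Next, I would invoke the relevant rigidity input. For the first claim, it suffices to combine the hypothesis that $X$ is closed and positively curved with the result of \cite{Santos2022} that any such manifold is isometrically rigid with respect to $\PP_2$. Applying Theorem~\ref{t:rigidity of cylinders} with $p=2$ then yields the isometric rigidity of $X \times_q [0,\infty)$ with respect to $\PP_2$ for every $q>1$. For the second claim, one uses instead that \cite{Santos2022} establishes isometric rigidity of CROSSes with respect to $\PP_p$ for all $p>1$; since CROSSes are closed, positively curved Riemannian manifolds, the hypotheses of Theorem~\ref{t:rigidity of cylinders} are again satisfied, and one concludes the isometric rigidity of $X \times_q [0,\infty)$ with respect to $\PP_p$ for every $p,q>1$.

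There is no real obstacle here, since all the technical work is absorbed into Theorem~\ref{t:rigidity of cylinders} and into the cited rigidity results. The only point worth a brief comment in the write-up is the verification of Condition~\ref{cond:manifold point} for Riemannian manifolds, which the paper handles via Remark~\ref{rem:manifolds and condition a}, so the proof reduces to a short paragraph citing these two ingredients and applying the theorem.
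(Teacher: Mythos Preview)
Your proposal is correct and matches the paper's approach exactly: the corollary is stated immediately after Theorem~\ref{t:rigidity of cylinders} with the sentence ``This leads to the following result,'' preceded by the observation that Riemannian manifolds satisfy Condition~\ref{cond:manifold point} (via Remark~\ref{rem:manifolds and condition a}) and the citation of \cite{Santos2022} for the rigidity of positively curved manifolds with respect to $\PP_2$ and of CROSSes with respect to $\PP_p$ for $p>1$. No separate proof is given in the paper, and your write-up supplies precisely the routine verifications implicit in that sentence.
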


Finally, we show that taking the spherical suspension over compact, geodesic, non-branching spaces that satisfy a technical assumption (see Condition \ref{cond:general position}) preserves isometric rigidity with respect to $\PP_p$, for any $p>1$.

\begin{thm}\label{t:rigidity of compact spaces}
Let $(X,d)$ be a compact metric space with $\diam(X)<\pi/2$ that satisfies Condition \ref{cond:general position}, and let $p>1$. Then the spherical suspension of $X$ is isometrically rigid with respect to $\PP_p$.
\end{thm}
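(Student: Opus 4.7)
Let $\Phi\colon \PP_p(\Sigma X) \to \PP_p(\Sigma X)$ be an arbitrary isometry, write $N, S \in \Sigma X$ for the suspension poles, and let $P_{t_0} = X \times \{t_0\}$ denote the parallel at latitude $t_0 \in [0,\pi]$. The first step is to pin down the poles. Since $\diam(X) < \pi/2$, the pair $\{N, S\}$ is the unique pair in $\Sigma X$ at the maximal distance $\pi$; since the bound $\W_p(\mu, \nu) \leq \pi$ is saturated only when every optimal coupling is supported on pairs at distance $\pi$, it follows that $\{\delta_N, \delta_S\}$ is the unique pair in $\PP_p(\Sigma X)$ at $\W_p$-distance $\pi$. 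Thus $\Phi$ permutes $\{\delta_N, \delta_S\}$ and, composing with the pushforward of the involution $\sigma(x,t) = (x, \pi - t)$ if necessary, I may assume $\Phi(\delta_N) = \delta_N$ and $\Phi(\delta_S) = \delta_S$. Next, using that $d_\Sigma(N, z) + d_\Sigma(S, z) = \pi$ for every $z \in \Sigma X$ together with the strict convexity of $s \mapsto s^p$ for $p > 1$, Jensen's inequality characterises measures supported on $P_{t_0}$ as exactly those $\mu \in \PP_p(\Sigma X)$ satisfying $\W_p(\delta_N, \mu) = t_0$ and $\W_p(\delta_S, \mu) = \pi - t_0$. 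Hence $\Phi$ preserves $\PP_p(P_{t_0})$ for every $t_0$, and because $\diam(X) < \pi/2$ forces the induced metric on the equator $P_{\pi/2}$ to coincide with $d$, the restriction $\Phi_{\mathrm{eq}}$ is an isometry of $\PP_p(X, d)$.

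The main obstacle is to prove that $\Phi_{\mathrm{eq}}$ sends Dirac masses to Dirac masses; this is the step where Condition \ref{cond:general position} enters, and it is essential, since $X$ itself is \emph{not} assumed to be rigid with respect to $\PP_p$. The plan is to exploit the ambient structure of $\PP_p(\Sigma X)$ beyond the equator: for each $x \in X$ I would build, using Condition \ref{cond:general position}, a finite family of reference measures in $\PP_p(\Sigma X)$ — typically supported on non-equatorial parallels at points whose projections to $X$ are in general position relative to $x$ — whose joint $\W_p$-distance profile singles out $\delta_{(x, \pi/2)}$ among all elements of $\PP_p(P_{\pi/2})$. Applying $\Phi$ moves this profile onto $\Phi_{\mathrm{eq}}(\delta_{(x,\pi/2)})$, and the general position condition, combined with the preservation of poles and parallels already established, should force the image to be a Dirac mass. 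The resulting bijection $\phi\colon X \to X$ satisfying $\Phi_{\mathrm{eq}}(\delta_x) = \delta_{\phi(x)}$ is automatically an isometry of $(X, d)$, because $\Phi_{\mathrm{eq}}$ is an isometry and the $\W_p$-distance between two Dirac masses on the equator equals $d$.

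Once $\phi$ is available, I would extend $\Phi$ to all Dirac masses via the radial structure. Every $\W_p$-geodesic from $\delta_N$ to $\delta_S$ is a displacement interpolation along the family of radial lines $\{x\} \times [0, \pi]$ of $\Sigma X$, so it takes the form $t \mapsto (x \mapsto (x, t))_* \eta$ for some $\eta \in \PP_p(X)$. The Dirac radial $t \mapsto \delta_{(x, t)}$ is the unique such geodesic whose value at the equator is $\delta_{(x, \pi/2)}$, by non-branching of $\Sigma X$ (inherited from the underlying hypotheses on $X$ and the spherical cosine law). Applying $\Phi$, the image is the Dirac radial at $\phi(x)$, and hence $\Phi(\delta_{(x, t)}) = \delta_{(\phi(x), t)}$ for all $x \in X$ and $t \in [0, \pi]$. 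The map $\Phi_\Sigma(x, t) := (\phi(x), t)$ is then an isometry of $\Sigma X$ whose pushforward $(\Phi_\Sigma)_*$ agrees with $\Phi$ on the $\W_p$-dense set of finitely supported measures; by continuity, $\Phi = (\Phi_\Sigma)_*$, establishing isometric rigidity of $\Sigma X$ with respect to $\PP_p$.
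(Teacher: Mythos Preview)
Your argument has a genuine gap in the final paragraph, and it also misplaces where Condition~\ref{cond:general position} is actually needed.

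\textbf{The gap.} You assert that once $\Phi$ agrees with $(\Phi_\Sigma)_*$ on Dirac masses, it automatically agrees on the dense set of finitely supported measures. This does not follow. An isometry of $\PP_p$ that fixes every Dirac mass but is not the identity is precisely an \emph{exotic isometry}, and ruling these out is the entire content of isometric rigidity (cf.\ Proposition~\ref{lem:characterisation of isometric rigidity}). Your argument establishes that $\Psi := (\Phi_\Sigma)_*^{-1}\circ\Phi$ lies in the subgroup $K$ of isometries fixing $\Delta_1(\Susp(X))$ pointwise, but you then need a separate argument that $K$ is trivial. The paper devotes three subsections to this: it shows successively that elements of $K$ fix measures supported on meridians (via rigidity of closed intervals), then measures supported on parallels $X\times\{t\}$, and finally---using Condition~\ref{cond:general position}---arbitrary atomic measures in general position. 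Only then does density give the conclusion.

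\textbf{Where Condition~\ref{cond:general position} enters.} You claim the condition is needed to show that $\Phi$ preserves Dirac masses on the equator. In fact the paper obtains Dirac invariance \emph{without} Condition~\ref{cond:general position}: it characterises $\delta_{[x,\pi/2]}$ metrically as the unique $\nu\in\PP_p(X\times\{\pi/2\})$ for which $\Mid(\nu,(1-\lambda)\delta_{\mathbf{0}}+\lambda\delta_{\ppi})$ is a singleton for every $\lambda\in(0,1)$ (Lemma~\ref{prop.characterizationDiracdeltaspi/2}). Condition~\ref{cond:general position} is instead used in Proposition~\ref{prop:invariance of atomic measures in general position}, where one must distinguish a given atomic measure $\mu$ from other measures $\mu'\in\Int(\widetilde\mu,\nu)$ having the same parallel-by-parallel marginals; the condition guarantees a meridian $\gamma^o$ onto which the projections of $\mu$ and $\mu'$ differ. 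Your proposed use of the condition---building reference measures whose distance profile singles out a single Dirac on the equator---is left as a plan without a concrete mechanism, and even if it could be made to work it would not close the gap above.
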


Since finite-dimensional complete Riemannian manifolds and Alexandrov spaces satisfy  Condition \ref{cond:general position} (see Remark \ref{rem:manifolds and condition b}),  we obtain the following consequence of theorem~\ref{t:rigidity of compact spaces}.

\begin{cor}
If $X$ is a finite-dimensional complete Riemannian manifold or Alexandrov space with $\diam(X)<\pi/2$, then the spherical suspension of $X$ is isometrically rigid with respect to $\PP_p$ for any $p>1$.
\end{cor}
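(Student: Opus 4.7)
The plan is to deduce this corollary as an immediate application of Theorem \ref{t:rigidity of compact spaces}, so the task reduces to verifying each of its hypotheses for the space $X$.

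First I would check compactness. A complete, finite-dimensional Riemannian manifold of bounded diameter is compact by the Hopf--Rinow theorem; analogously, a finite-dimensional complete Alexandrov space of bounded diameter is compact, since such spaces are locally compact and admit a Hopf--Rinow type statement (see, e.g., \cite{BBI}). The diameter bound $\diam(X)<\pi/2$ is given, so compactness follows in either case. Second, both Riemannian manifolds (when complete) and finite-dimensional Alexandrov spaces are geodesic; since the spherical suspension construction is being applied, the underlying space is automatically a geodesic metric space.

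Next I would invoke Remark \ref{rem:manifolds and condition b}, which already records the fact that finite-dimensional complete Riemannian manifolds and Alexandrov spaces satisfy Condition \ref{cond:general position}. Together with the diameter assumption $\diam(X)<\pi/2$ and the compactness established above, all hypotheses of Theorem \ref{t:rigidity of compact spaces} are met.

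Applying Theorem \ref{t:rigidity of compact spaces} with the parameter $p>1$ then yields that the spherical suspension of $X$ is isometrically rigid with respect to $\PP_p$. Since the corollary is just a specialisation of the theorem to two classes of spaces where Condition \ref{cond:general position} is already verified, there is no genuine obstacle; the only mild subtlety is ensuring that the notion of ``finite-dimensional complete Alexandrov space'' used in Remark \ref{rem:manifolds and condition b} matches the one in the corollary, but this is a matter of convention rather than substance.
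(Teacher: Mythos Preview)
Your proposal is correct and follows essentially the same approach as the paper: the corollary is stated as an immediate consequence of Theorem~\ref{t:rigidity of compact spaces} once one invokes Remark~\ref{rem:manifolds and condition b} to verify Condition~\ref{cond:general position}. One minor point: your check that $X$ is geodesic is superfluous, since Theorem~\ref{t:rigidity of compact spaces} only requires $X$ to be compact with $\diam(X)<\pi/2$ and to satisfy Condition~\ref{cond:general position} (the paper explicitly notes that geodesics in $X$ are not needed, as the suspension geometry supplies the requisite structure).
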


\begin{ack}
This work originated from discussions held during a reading seminar in the Department of Mathematical Sciences at Durham University. The authors would like to thank Mohammad Alattar, Manuel Cuerno, and Kohei Suzuki for their valuable comments. M. C. was funded by CONAHCYT Doctoral Scholarship No. 769708. J. S.-R. was supported in part  by research grants MTM 2017-‐85934-‐C3-‐2-‐P, PID 2021-124195NB-C32 from the Ministerio de Econom\'ia y Competitividad de Espa\~{na} (MINECO) and by a Margarita Salas Fellowship from the Universidad Aut\'onoma de Madrid CA1/RSUE/2021-00625 which allowed him to spend a year as a Visiting Researcher at Durham University.
\end{ack}

\section{Preliminaries}
\label{s:preliminaries}

\subsection{Wasserstein spaces and optimal transport}
We now briefly review some background from the theory of optimal transport. We refer the reader to \cite{ambrosio-gigli,V09} for further details.

Let $(X,d)$ be a metric space and fix $p\in [1,\infty)$. The \emph{$p$-Wasserstein space} $\PP_p(X)$ is the set of Borel probability measures on $X$ with finite $p$-moments, i.e. probability measures $\mu$ on $X$ such that
\[
\int_X d(x,x_0)^p\ d\mu(x) < \infty
\]
for some (and thus for any) $x_0\in X$. The set $\PP_p(X)$ is endowed with the \emph{$p$-Wasserstein metric} $\W_p$ given by
\begin{equation}\label{eq:p-wasserstein}
\W_p(\mu,\nu) = \inf_{\pi} \left(\int_{X\times X} d(x,y)^p\ d\pi(x,y)\right)^{1/p},
\end{equation}
where $\pi$ runs over all possible \emph{transport plans} between $\mu$ and $\nu$, i.e. probability measures $\pi$ on $X\times X$ such that $p^1_\#\pi=\mu$ and $p^2_\#\pi=\nu$. Here, $p^1$ and $p^2$ denote, respectively, the projection maps onto the first and second factors of $X\times X$.

Whenever the infimum in \eqref{eq:p-wasserstein} is achieved, we say that $\pi$ is an {\em optimal plan} between $\mu$ and $\nu$. We denote by $\Opt(\mu,\nu)$ the set of optimal plans between $\mu$ and $\nu$. Observe that this set depends on the parameter $p$, but we omit this dependence for ease of notation and because it will always be clear from the context. Finally, we will some times denote by $\PP_p(Y)$ the set of measures $\mu\in \PP_p(X)$ such that $\supp(\mu)\subset Y$, whenever $Y\subset X$.

Due to general results in the theory of optimal transport, we know that, whenever $X$ is a Polish metric space, i.e. complete and separable, the set $\Opt(\mu,\nu)$ is non-empty for any $\mu,\nu\in\PP_p(X)$ (see, for example, \cite{ambrosio-gigli}).  Moreover, it is possible to characterise optimal plans in terms of \textit{cyclical monotonicity}. Namely, for any transport plan $\pi$ between $\mu,\nu\in\PP_p(X)$, $\pi\in\Opt(\mu,\nu)$ if and only if $\supp(\pi)$ is cyclically monotone, i.e.\ for any $N\in\NN$, $\{(x_1,y_1),\dots,(x_N,y_N)\}\subset \supp(\pi)$, and $\sigma$ a permutation of the set $\{1,\dots,N\}$, the following inequality holds:
\[
\sum_{i=1}^{N} d(x_i,y_i)^p\leq \sum_{i=1}^{N} d(x_i,y_{\sigma(i)})^p.
\]

It is particularly desirable to find optimal plans $\pi \in \Opt(\mu, \nu)$ which are induced by Borel maps $F \colon X \to X$, i.e.~optimal plans $\pi  \in \Opt(\mu, \nu)$ such that $\nu = F_\# \mu$ and $\pi = (\id_X, F)_\# \mu$.  In this case, the map $F \colon X \to X$ is called an \emph{optimal map}.  

In general, $\PP_p(X)$ reflects the geometry of the underlying space $X$. It is well known, for example, that $\PP_p(X)$ is a Polish space whenever $X$ is so, and that $\PP_p(X)$ is compact or geodesic if and only if $X$ has the same property (see, for example, \cite{ambrosio-gigli}). Furthermore, there is an isometric embedding $X\to \PP_p(X)$ given by $x\mapsto \delta_x$, where $\delta_x$ is the Dirac measure at $x$. We denote the image of this embedding by $\Delta_1(X)$. More generally, we define
\[
\Delta_n(X) = \left\{\sum_{i=1}^{n}\lambda_i\delta_{x_i}:n\in \NN,\ x_i\in X,\ \lambda_i\in[0,1],\ \sum_{i=1}^{n} \lambda_i =1 \right\}.
\]
An \emph{atomic measure} is a measure contained in some $\Delta_n(X)$ for some $n\in \NN$. Also, we will omit $X$  and simply write $\Delta_n$ whenever there is no risk of confusion.

An \emph{isometry} of a metric space $Y$ is a surjective, distance-preserving map $\varphi\colon Y\to Y$. We denote the group of isometries of $Y$ by $\Isom(Y)$. The \emph{push-forward map} $\#\colon\Isom(X)\to \Isom(\PP_p(X))$, given by
\[
\varphi \mapsto \varphi_\#,
\]
is an injective homomorphism. Here, $\varphi_\#$ denotes the push-forward by $\varphi$, i.e. $\varphi_\#(\mu)(A)=\mu(\varphi^{-1}(A))$ for any $\mu\in \PP_p(X)$ and any Borel set $A\subset X$.

\begin{definition}\label{d:rigidity and flexibility}
A metric space $X$ is \emph{isometrically rigid with respect to $\PP_p$} if the push-forward map induces an isomorphism of groups, i.e. $\Isom(\PP_p(X))=\#(\Isom(X))$. An isometry $\varphi\in \Isom(\PP_p(X))\setminus\#(\Isom(\PP_p(X))$ is an \emph{exotic isometry}.
\end{definition}

\begin{remark}
Concerning definition \ref{d:rigidity and flexibility}, it is common in the literature to define the \emph{isometric rigidity of $\PP_p(X)$}, rather than the \emph{isometric rigidity of $X$ with respect to $\PP_p$}, as we do. Although the former phrasing simplifies the terminology, the authors prefer the latter as it offers a broader perspective on the rigidity of geometric constructions. This perspective applies not only to the construction of the Wasserstein space but also to other metric spaces constructed functorially from a given metric space.
\end{remark}

To prove theorems~\ref{t:rigidity of rays}, \ref{t:rigidity of cylinders}, and \ref{t:rigidity of compact spaces}, we use the following general strategy. We first show that isometries of the corresponding Wasserstein space preserve the set of Dirac measures. Then, we prove, via different techniques depending on the situation, that the set of atomic measures is also invariant. We then conclude by a density argument and the following characterisation of isometric rigidity.
\begin{proposition}
\label{lem:characterisation of isometric rigidity}
Let $Y$ be a metric space and assume that $\Psi(\Delta_1(Y)) = \Delta_1(Y)$ for any $\Psi\in \Isom(\PP_p(Y))$. Then   $Y$ is isometrically rigid if and only if the subgroup
\[
K = \{\Phi\in \Isom(\PP_p(Y)):\Phi(\delta_{y})=\delta_{y}\ \text{for all}\  y\in Y\} \subset \Isom(\PP_p(Y))\}
\]
is trivial, i.e.\ if the action of $\Isom(\PP_p(Y))$ on $\Delta_1(Y)$ is effective.
\end{proposition}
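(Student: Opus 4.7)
The plan is to prove the equivalence directly, using only that the push-forward map $\#\colon \Isom(Y) \to \Isom(\PP_p(Y))$ is an injective homomorphism, that $y \mapsto \delta_y$ is an isometric embedding $Y \to \Delta_1(Y) \subset \PP_p(Y)$, and the standing hypothesis that every $\Psi \in \Isom(\PP_p(Y))$ satisfies $\Psi(\Delta_1(Y)) = \Delta_1(Y)$. No deep facts from optimal transport are needed; the argument is essentially bookkeeping.

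For the forward direction, I would assume $Y$ is isometrically rigid and take an arbitrary $\Phi \in K$. By rigidity there exists $\varphi \in \Isom(Y)$ with $\Phi = \varphi_\#$, so the defining property of $K$ gives $\delta_{\varphi(y)} = \varphi_\#(\delta_y) = \delta_y$ for every $y \in Y$. This forces $\varphi = \id_Y$, and because $\#$ is injective, $\Phi = \id_{\PP_p(Y)}$. Hence $K$ is trivial.

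For the converse, assume $K$ is trivial and take $\Phi \in \Isom(\PP_p(Y))$. By hypothesis, $\Phi$ preserves $\Delta_1(Y)$, and since $y \mapsto \delta_y$ is an isometric bijection onto $\Delta_1(Y)$, the restriction $\Phi|_{\Delta_1(Y)}$ corresponds to a well-defined isometry $\varphi \in \Isom(Y)$ characterised by $\Phi(\delta_y) = \delta_{\varphi(y)}$ for all $y \in Y$. Consider then $\Phi' := (\varphi_\#)^{-1} \circ \Phi \in \Isom(\PP_p(Y))$. By construction,
\[
\Phi'(\delta_y) = (\varphi_\#)^{-1}(\delta_{\varphi(y)}) = \delta_{\varphi^{-1}(\varphi(y))} = \delta_y,
\]
so $\Phi' \in K$ and hence $\Phi' = \id$. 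Therefore $\Phi = \varphi_\# \in \#(\Isom(Y))$, which, together with the injectivity of $\#$, yields $\Isom(\PP_p(Y)) = \#(\Isom(Y))$, i.e.\ isometric rigidity.

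There is no genuine obstacle: the only subtlety is to verify that the correspondence $\Phi|_{\Delta_1(Y)} \leftrightarrow \varphi$ really produces a surjective isometry of $Y$, which is immediate because $\Phi$ is a bijection onto $\PP_p(Y)$ that preserves the invariant set $\Delta_1(Y)$, and $y \mapsto \delta_y$ is an isometric identification. I would state the final line explicitly in terms of definition~\ref{d:rigidity and flexibility} to close the loop.
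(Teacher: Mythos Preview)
Your proof is correct and follows essentially the same approach as the paper's own proof; the only difference is that you present the two implications in the opposite order. Both arguments reduce to composing $\Phi$ with $(\varphi_\#)^{-1}$ to land in $K$, and conversely to reading off $\varphi=\id_Y$ from $\delta_{\varphi(y)}=\delta_y$.
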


\begin{proof}
Assume first that $K$ is trivial and let $\Phi\in \Isom(\PP_p(Y))$. Since $\Delta_1(Y)$ is an isometric embedding of $Y$ into $\PP_p(Y)$ and  $\Phi(\Delta_1(Y)) = \Delta_1(Y)$,  the map $\Phi$ induces an isometry $\phi$ of $Y$.  
    Then 
    \[
    \Phi|_{\Delta_1(Y)} = \phi_\#|_{\Delta_1(Y)}.
    \]
    Thus, $\phi_\#^{-1}\circ \Phi \in K$. Since $K$ is trivial, we have $\phi_\#^{-1}\circ \Phi = \mathrm{id}_{\PP_p(Y)}$, which implies $\Phi = \phi_\#$. Therefore, there are no exotic isometries, and $Y$ is isometrically rigid.

    Assume now that $Y$ is isometrically rigid and let $\Phi\in K$. Since $Y$ is rigid, $\Phi=\phi_\#$ for some $\phi\in \Isom(Y)$ and, since $\Phi \in K$, we have $\Phi(\delta_y) =\delta_y$ for any $y\in Y$. Thus,
    \[
    \delta_{\phi(y)} = \phi_\#(\delta_y) = \Phi(\delta_y) = \delta_y,
    \]
    which implies $\phi(y) = y$. Therefore, $\phi= \mathrm{id}_Y$, and it follows that $\Phi =\phi_\# = \mathrm{id}_{\PP_p(Y)}$.
\end{proof}

\subsection{Geodesics, midpoints and non-branching condition}\label{ss:geodesics and curvature bounds} We now briefly recall basic notions on geodesics on metric spaces. The contents of this subsection are taken from \cite{BBI,BridsonHaefliger1999}, wherein further background material can be found. 

Let $(X,d)$ be a metric space. A path $\gamma\colon [0,1]\to X$ is a \emph{geodesic} joining $x, y\in X$ if $\gamma_0=x$, $\gamma_1=y$, and $d(x,y) =  L(\gamma)$, where $L$ denotes the length of $\gamma$. The notation $[xy]$ will sometimes be used for a geodesic joining $x,y$ whenever it is necessary to emphasise the endpoints. Furthermore, we denote by $\Geo(X)$ the set of geodesics (parametrised with constant speed) in $X$, and define the evaluation maps $e_t\colon \Geo(X)\to X$ given by $e_t(\gamma) = \gamma_t$, for each $t\in [0,1]$.

The metric space  $X$ is a \emph{geodesic space} if, for any $x,y\in X$, there exists a geodesic joining $x$ and $y$. Alternatively, we can characterise geodesic spaces in terms of midpoints, whenever $X$ is complete. Namely, $X$ is a geodesic space if and only, for every $x,y\in X$, the set of \emph{midpoints} between $x$ and $y$,
\[
\Mid(x,y) = \{z\in X: d(x,z)=d(y,z)=d(x,y)/2\},
\]
is non-empty. For this characterisation, the assumption of completeness is crucial. We will use this equivalence throughout the article without further comment.

In general, given $x,y\in X$, $x\neq y$, and $t\in [0,1]$, we define
\[
M^t(x,y) = \{ z\in X: d(x,z)=td(x,y)\ \text{and}\ d(y,z)=(1-t)d(x,y)\}.
\]
In particular, $M^{1/2}(x,y)=\Mid(x,y)$ is the set of midpoints between $x$ and $y$. Moreover, we say that $z$ is an interior point of $x,y$ if there exists $t\in (0,1)$ such that $z\in M^{t}(x,y)$. The set of all such  interior points will be denoted by $\Int(x,y)$.

Recall, furthermore, that every geodesic can be reparametrised to have constant speed, i.e. in such a way that $d(\gamma_s,\gamma_t)=|s-t|d(\gamma_0,\gamma_1)$ for any $s,t\in [0,1]$. We denote the set of geodesics on $X$ parametrised with constant speed by $\Geo(X)$, and endow it with the uniform metric.

The space $X$ is \emph{non-branching} if, given any two geodesics $\alpha,\beta\in\Geo(X)$, then having $\alpha|_{[0,t]} = \beta|_{[0,t]}$ for some $t\in (0,1]$ implies $\alpha = \beta$.

For any $x\in X$, the \textit{cut locus} of $x$, denoted by $\Cut(x)$, is the complement of the set of points $y\in X$ such that there exists $\gamma\in \Geo(X)$ with $\gamma_0=x$ and $\gamma_{t}=y$ for some $t\in (0,1)$.

\section{Proof of theorem~\ref{t:flexibility of spaces with lines}: Non-rigidity of spaces splitting a Hilbert space}
\label{s:isometric flexibility}
In this section, we prove theorem~\ref{t:flexibility of spaces with lines}, providing a family of examples that are not isometrically rigid.   We begin with some terminology and preliminary observations.

\begin{definition}
\label{D:split}
Let $(X,d)$ be a Polish metric space and $(H,|\cdot|)$ a Hilbert space.
\begin{enumerate}[label=(\alph*)]
    \item Two isometric embeddings $\eta_1, \eta_2 \colon H \to X$ are \emph{parallel} if there exists an isometric embedding $\iota \colon \im(\eta_1) \cup \im(\eta_2) \to H\times \RR$ of their images such that the image of $\iota$ is of the form $H\times\{s,t\}$ for some $s,t\in \RR$.
    \vspace{1mm}
    \item \label{D:split_c}
    An isometric embedding $\eta \colon H \to X$ in $X$ \emph{induces a splitting} of the Wasserstein space $(\PP_2(X), \W_2)$ if there exists a metric space $(Z,d_Z)$ and an isometry 
    \[
    \Phi \colon (\PP_2(X), \W_2) \to (H\times Z, \dprod)\,,
    \]
    such that the isometric embedding $\overline\eta \colon H \to \PP_2(X)$ in $\PP_2(X)$ defined by $\overline\eta(h) = \delta_{\eta(h)}$ satisfies $\Phi(\overline\eta(h)) = (h, z_o)$ for some $z_o \in Z$, where the product metric $\dprod$ on $H \times Z$ is given by
     \[
     \dprod((h_1, z_1), (h_2, z_2)) = \left( |h_1 - h_2|^2 + d_Z(z_1, z_2)^2 \right)^{\frac{1}{2}}\,.
     \]
    \item The space $X$ has the \emph{$\PP_2$-splitting property} if there exists an isometric embedding $H\colon H\to X$ which induces a splitting of $(\PP_2(X), \W_2)$.
\end{enumerate} 
\end{definition}

\begin{remark}\label{rem:takatsu-yokota}
As mentioned in the introduction, Takatsu and Yokota showed in \cite{TakatsuYokota2012} that if $X$ is a Polish space that splits a separable Hilbert space $H$, then $\PP_2(X)$ splits $H$ as well. Moreover, a consequence of the proof of \cite[Theorem 1.3]{TakatsuYokota2012} is that, in that case, $X$ has the $\PP_2$-splitting property. Observe that \cite[Theorem 1]{mitsuishi10}, combined with \cite[Proposition 2.10]{S06I}, implies this result for non-negatively curved Alexandrov spaces which split a separable Hilbert space.
\end{remark}

Observe that, even without specifying an isometric embedding of $H$, the existence of an isometry $(A,d_A) \to (H \times Z, \dprod)$ (as in part \ref{D:split_c} of Definition \ref{D:split}) ensures that the metric space $(A, d_A)$ is foliated by parallel isometric embeddings of $H$, as shown by the simple lemma below.

\begin{lemma}
\label{L:split}
    Let $(A,d_A)$ and $(Z,d_Z)$ be metric spaces and suppose that there exists an isometry $\Phi \colon (A,d_A) \to (H \times Z, \dprod)$.
    Then:
    \begin{enumerate}
        \item \label{L:lines}
        For all $z \in Z$, the map 
        \begin{align*}
            \eta^z \colon H &\to A \\
            h &\mapsto \Phi^{-1}(h,z)
        \end{align*}
        is an isometric embedding, and for $z_1, z_2 \in Z$, the isometric embeddings $\eta^{z_1}$ and $\eta^{z_2}$ are parallel.
        \vspace{1mm}
        \item  \label{L:lines_b}
        For all $q \in A$, there exists, up to an affine isometry of $H$, a unique isometric embedding $\eta\colon H\to A$ passing through $q$, which is parallel to $\eta^z$ for every $z \in Z$.
        \vspace{1mm}
        \item \label{L:lines_c}
        For all $q \in A$ and for all $z \in Z$, there exists a unique point on $\im(\eta^z)$ closest to $q$, which yields a well-defined projection map $\proj_{\eta^{z}} \colon A \to A$ defined by
        \[
        \proj_{\eta^{z}} \left( \Phi^{-1}(h,w) \right) = \Phi^{-1}(h, z) = \eta^z(h)  \,.
        \]
        \item For all $z,w \in Z$, we have
        \[
        \proj_{\eta^{z}} \circ \proj_{\eta^{w}} = \proj_{\eta^{z}} \,.
        \]
    \end{enumerate}
\end{lemma}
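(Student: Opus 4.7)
The plan is to treat each part separately. Parts (1), (3), and (4) are essentially direct computations using the product structure of $H \times Z$, while (2) requires a slightly more delicate use of the definition of parallel isometric embeddings. For part (1), since $\Phi^{-1}$ is an isometry and $h \mapsto (h,z)$ is an isometric embedding of $H$ into $H \times Z$, the composition $\eta^z = \Phi^{-1}(\cdot, z)$ is an isometric embedding. For the parallel claim, I would produce the witness map $\iota\colon \im(\eta^{z_1}) \cup \im(\eta^{z_2}) \to H \times \RR$ explicitly by
\[
\iota(\eta^{z_1}(h)) = (h, 0), \qquad \iota(\eta^{z_2}(h)) = (h, d_Z(z_1, z_2)),
\]
and verify that it is distance-preserving via a short case analysis using the explicit formula for $\dprod$, noting that its image is exactly $H \times \{0, d_Z(z_1,z_2)\}$.

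For part (3), given $q = \Phi^{-1}(h_0, w)$, the distance in $A$ from $q$ to $\eta^z(h) = \Phi^{-1}(h,z)$ equals
\[
d_A(q, \eta^z(h)) = \sqrt{|h - h_0|^2 + d_Z(z, w)^2},
\]
which, as a function of $h$, is strictly minimized at $h = h_0$. Hence $\proj_{\eta^z}(q) = \Phi^{-1}(h_0, z) = \eta^z(h_0)$, matching the claimed formula. Part (4) is then immediate: starting from $q = \Phi^{-1}(h, u)$, applying $\proj_{\eta^w}$ first yields $\Phi^{-1}(h, w)$, and then applying $\proj_{\eta^z}$ yields $\Phi^{-1}(h, z)$, which coincides with the direct value of $\proj_{\eta^z}(q)$.

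For part (2), existence is immediate by taking $\eta = \eta^{z_0}$, where $q = \Phi^{-1}(h_0, z_0)$. For uniqueness, I would assume that $\tilde\eta\colon H \to A$ is another isometric embedding through $q$ parallel to every $\eta^z$, and apply the parallel condition to the pair $(\tilde\eta, \eta^{z_0})$: this produces an isometric embedding $\iota\colon \im(\tilde\eta) \cup \im(\eta^{z_0}) \to H \times \RR$ with image $H \times \{s, t\}$. Since $q$ lies in both images and $\iota$ is injective, the single point $\iota(q)$ cannot lie simultaneously in two distinct levels, which forces $s = t$. Then $\iota$ realises an isometric bijection from $\im(\tilde\eta) \cup \im(\eta^{z_0})$ onto a single copy of $H$, and from here one concludes that $\im(\tilde\eta) = \im(\eta^{z_0})$, giving uniqueness up to an affine isometry of $H$.

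The main obstacle I anticipate is precisely this last step in (2): concluding that two isometric copies of $H$ sharing a common point, whose union is a single isometric copy of $H$, must coincide. In the finite-dimensional case this follows from the Mazur--Ulam theorem, which makes any isometric embedding of Euclidean space into itself affine and hence surjective. In general separable Hilbert space one may need to invoke additional rigidity results, or alternatively exploit the stronger hypothesis that $\tilde\eta$ is parallel to \emph{every} $\eta^z$ (not just $\eta^{z_0}$) to rule out non-surjective embeddings. All other assertions reduce to unpacking the product structure of $H \times Z$ via $\Phi$.
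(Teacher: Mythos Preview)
Your proposal is correct and matches the paper's proof almost line for line: parts (1), (3), and (4) are identical to the paper's arguments, and for (2) the paper also takes $\eta^{z_o}$ for existence, then for uniqueness uses the parallel condition between $\eta'$ and $\eta^{z_o}$ to force the two levels to coincide, concludes $\im(\eta') = \im(\eta^{z_o})$, and finishes with Mazur--Ulam to get that the reparametrisation is affine.

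One small correction to your stated concern: Mazur--Ulam holds for surjective isometries between arbitrary real normed spaces, not just finite-dimensional ones, so there is no dimensional obstruction there. The genuine subtlety you flag is the step \emph{before} Mazur--Ulam, namely why the two images coincide once the levels $s,t$ agree (equivalently, why the induced self-map of $H$ is surjective). The paper dispatches this in one sentence (``since $\iota$ is an embedding, $\im(\eta')$ and $\im(\eta^{z_o})$ must coincide''), which is justified by reading the definition of parallel so that $\iota$ carries each image onto a full level $H\times\{s\}$ and $H\times\{t\}$ respectively; when $s=t$ both images equal $\iota^{-1}(H\times\{s\})$ by injectivity. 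With that reading your argument is complete exactly as written, and no appeal to the stronger ``parallel to every $\eta^z$'' hypothesis is needed.
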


\begin{proof}
The proof follows easily from the existence of the isometry $\Phi$.
\begin{enumerate}
    \item Let $z \in Z$.  Then the map $\ell^z$ is clearly an isometric embedding since
    \[
    d_A(\eta^z(h_1), \eta^z(h_2)) = |h_1 - h_2|
    \]
    by definition. Now, for $z_1, z_2 \in Z$, consider the map
    \[
        \iota \colon \im(\eta^{z_1}) \cup \im(\eta^{z_2}) \to H\times \RR
    \]
    defined by $\iota(\eta^{z_1}(h)) = (h,0)$ and $\iota(\eta^{z_2}(h)) = (h, d_Z(z_1, z_2))$.  
    Moreover, $\iota$ is an isometric embedding, since
    \[
    d_{H\times\mathbb{R}}(\iota(\eta^{z_1}(h_1)),\iota(\eta^{z_2}(h_2)) ) 
    = \left( |h_1 - h_2|^2 + d_Z(z_1, z_2)^2 \right)^{\frac{1}{2}}
    = d_A(\eta^{z_1}(h_1), \eta^{z_2}(h_2)) \,,
    \]
    for all $h_1, h_2 \in H$.  Therefore, $\eta^{z_1}$ and $\eta^{z_2}$ are parallel, as claimed.
    
    \vspace{1mm}
    \item Let $q \in A$ and let $(h_o, z_o) \in H \times Z$, such that $\Phi(q) = (h_o, z_o)$.  By part \ref{L:lines}, $\eta^{z_o}$ is an isometric embedding such that $q\in \im(\eta^{z_o})$ and which is parallel to $\eta^z$ for all $z \in Z$.  
    
    Suppose that $\eta' \colon H \to A$ is another isometric embedding with $q\in \im(\eta')$ which is parallel to $\eta^z$ for all $z \in Z$. Assume also that $\eta'(g_o) = q = \eta^{z_o}(h_o)$.  Clearly, there exist maps $\tau \colon H \to H$ and $\zeta \colon H \to Z$ such that $\Phi(\eta'(g)) = (\tau(g), \zeta(g)) \in H \times Z$, for all $g \in H$.  In particular, $\tau(g_o) = h_o$ and $\zeta(g_o) = z_o$.  

    Since $\eta'$ and $\eta^{z_o}$ are parallel, there exists an isometric embedding $\iota \colon \im(\eta') \cup \im(\eta^{z_o}) \to H\times \RR$ with image of the form $H\times \{\alpha,\beta\}$.  Since $\eta'(g_o) = \eta^{z_o}(h_o)$, it follows that $\alpha = \beta$.  Thus, since $\iota$ is an embedding, $\im(\eta')$ and $\im(\eta^{z_o})$ must coincide and, consequently, $\zeta(g) = z_o$ for all $g \in H$.  If $p^H\colon H\times Z\to H$ is the projection onto the first factor, then, since $p^H|_{H\times \{z_o\}}$ is surjective and $\tau(g)=p^H\circ \Phi \circ \eta'(g)$, we conclude that $\tau$ is an isometry of $H$. Thus, $\tau\colon H\to H$ is a surjective distance-preserving map. By the Mazur--Ulam theorem (see \cite{mazur-ulam}, \cite{Vaisala2003}), $\tau$ is affine. Therefore, $\eta'$ is an affine reparametrisation of $\eta^{z_o}$, as desired.
    
    \vspace{1mm}
    \item Let $q \in A$ and let $(h_o, z_o) \in H \times Z$, such that $\Phi(q) = (h_o, z_o)$.  Then, for every $z \in Z$, we have
    \[
    d_A(q, \eta^z(h)) = \left( |h_o - h|^2 + d_Z(z_o, z)^2 \right)^{\frac{1}{2}} \geq d_Z(z_o, z)\,,
    \]
    with equality if and only if $h = h_o$.  Therefore, there is a unique point on $\im(\eta^z)$ closest to $q$, namely, the point $\eta^z(h_o)$.
    
    \vspace{1mm}
    \item This follows easily from part \ref{L:lines_c}.
\end{enumerate}
\end{proof}

Recall that the Wasserstein space $\PP_2(X)$ consists of Borel probability measures with finite $2$-moments.  A measure with finite $2$-moments naturally yields a well-defined, real-valued function and a notion of center of mass.

\begin{definition}
    Let let $(X,d)$ be a metric space, and $\mu$ a non-negative Borel measure with finite $2$-moments.  
    \begin{enumerate}
        \item The \emph{Fr\'echet function} of $\mu$ is the function
        \begin{align*}
            F_\mu \colon X &\to \RR \\
            x &\mapsto \int_X d(x,y)^2 \ d\mu(y) \,.
        \end{align*}
        \vspace{1mm}
        \item The (possibly empty) set 
        \[
        \calF_\mu =  \argmin(F_\mu ) = \left\{x \in X : F_\mu (x) = \min_{y \in X} \{F_\mu (y)\} \right\} 
        \]
        of minimisers of $F_\mu $ is called the \emph{Fr\'echet-mean set of $\mu$}.  
        \vspace{1mm}
        \item A point $x \in \calF_\mu$, if such a point exists, is called a \emph{Fr\'echet mean}, \emph{barycenter}, or \emph{center of mass} of $\mu$.
    \end{enumerate}
\end{definition}

\begin{remark}\label{rem:existence of frechet means in proper spaces}
In particular, If $\mu = \delta_x$, for $x \in X$, then it is clear that $F_\mu(y) = d(x,y)^2$ and, hence, that $\calF_\mu = \{x\}$.  More generally, it was shown in \cite[Lemma 3.2]{Ohta12} (see also \cite[Theorem 2.1(a)]{bhattacharya-patrangenaru03}) that the Fr\'echet mean set of a probability measure $\mu \in \PP_2(X)$ is non-empty and compact whenever $X$ is a proper metric space.
\end{remark}

As in Euclidean spaces, barycenters in Hilbert spaces are unique. For completeness, we provide a proof of this fact.

\begin{lemma}
\label{L:uniq-bary}
     Let $\mu$ be a non-trivial, finite, non-negative Borel measure on a Hilbert space $H$ with finite $2$-moments. Then $\mu$ has a unique barycenter.
\end{lemma}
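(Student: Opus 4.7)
The plan is to derive a closed-form expression for the Fr\'echet function $F_\mu$ that exhibits it as a quadratic polynomial in $x$ whose unique minimiser can be read off directly; this yields both existence and uniqueness of the barycenter in one stroke.

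The first step is to define the candidate barycenter. Since $\mu$ is a finite measure with finite $2$-moments, the Cauchy--Schwarz inequality gives
\[
\int_H |y|\, d\mu(y) \leq \left(\int_H |y|^2\, d\mu(y)\right)^{1/2}\mu(H)^{1/2} < \infty,
\]
so the identity map $y \mapsto y$ is Bochner integrable on $H$. The integral $m := \int_H y\, d\mu(y) \in H$ therefore makes sense, and because $\mu$ is non-trivial we have $\mu(H)>0$, so we may set $\bar{y} := m/\mu(H) \in H$.

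Next, I would use the Hilbert structure to expand the integrand $|x-y|^2 = |x|^2 - 2\ip{x}{y} + |y|^2$ and integrate termwise, obtaining
\[
F_\mu(x) = \mu(H)|x|^2 - 2\ip{x}{m} + \int_H |y|^2\, d\mu(y).
\]
Completing the square in $x$ against $\bar{y}$ gives
\[
F_\mu(x) = \mu(H)\,|x-\bar{y}|^2 + \left(\int_H |y|^2\, d\mu(y) - \frac{|m|^2}{\mu(H)}\right),
\]
where the bracketed term is a constant independent of $x$. Since $\mu(H) > 0$, the function $x \mapsto \mu(H)|x-\bar{y}|^2$ is strictly minimised exactly at $x=\bar{y}$. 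Hence $\calF_\mu = \{\bar{y}\}$, which proves existence and uniqueness simultaneously.

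There is no real obstacle here: the only subtlety is justifying the exchange of the inner product with the integral in the cross term (equivalently, the defining property $\ip{x}{m} = \int_H \ip{x}{y}\, d\mu(y)$ of the Bochner integral), which is standard given the integrability bound established above. The argument works verbatim whether $H$ is finite- or infinite-dimensional, which is exactly the generality needed later in the section.
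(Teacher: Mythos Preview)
Your proof is correct and follows essentially the same route as the paper's: both expand $F_\mu(x)$ as a quadratic in $x$ and complete the square to identify the unique minimiser. The only cosmetic difference is that where you construct the barycenter as the Bochner integral $\bar y = \tfrac{1}{\mu(H)}\int_H y\,d\mu(y)$, the paper instead observes that $x\mapsto \tfrac{1}{\mu(H)}\int_H \langle x,y\rangle\,d\mu(y)$ is a bounded linear functional and invokes the Riesz representation theorem to obtain $b_\mu$; of course $b_\mu = \bar y$, so the two arguments are interchangeable.
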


\begin{proof}
    Observe that
    \[
    \frac{1}{\mu(H)}F_\mu(x) = |x|^2 - 2B_\mu(x) + C_\mu
    \]
    where 
    \[
    C_\mu=\frac{1}{\mu(H)}\int_H |y|^2\ d\mu(y)
    \] is a positive constant that depends on $\mu$, and $B_\mu$ is the linear and continuous functional given by
    \[
    B_\mu(x) = \frac{1}{\mu(H)}\int_H \langle x,y\rangle\ d\mu(y).
    \]
    By the Riesz representation theorem, there exists a unique $b_\mu\in H$ such that $B_\mu(x) = \langle x,b_\mu\rangle$. Finally, observe that
    \begin{align*}
        \frac{1}{\mu(H)}(F_\mu(x)-F_\mu(b_\mu)) &= |x|^2-2\langle x,b_\mu\rangle + |b_\mu|^2 \geq 0
    \end{align*}
    with equality if and only if $x = b_\mu$. Thus, $b_\mu$ is the unique minimiser of $F_\mu$ and the claim follows.
\end{proof}

From now on, we assume that there exists a proper metric space $(Y, d_Y)$ such that equipping the product $X = H\times Y$ with the product metric yields a Polish metric space $(X,d)$. In particular, by Remark \ref{rem:takatsu-yokota}, for any isometric embedding
\begin{align*}
\eta_{y_o} \colon H &\to X \\
h &\mapsto (h,y_o)
\end{align*}
there exists an isometry
    \[
    \Phi \colon (\PP_2(X), \W_2) \to (H \times Z, \dprod)
    \]
and an element $z_o \in Z$ such that
\begin{align*}
\eta^{y_o} \colon H &\to \PP_2(X) \\
h &\mapsto \delta_{(h,y_o)} = \delta_{\eta_{y_o}(h)}   
\end{align*}
satisfies $\eta^{y_o}(h) = \Phi^{-1}(h,z_o)$.

\begin{lemma}
\label{L:F-mean-set1}
    Let $(X,d)$ be a Polish metric space and $Y$ a proper metric space such that $X = H \times Y$ and $d$ is the product metric.  Fix $y_o \in Y$ and let $\eta_{y_o}$, $\eta^{y_o}$ and $\Phi$ be as defined above.
    Then, for all $\mu \in \PP_2(X)$, the Fr\'echet-mean set $\calF_\mu$ is a non-empty compact subset of $X$ and satisfies
    \[
    \calF_\mu \subset \{g\} \times Y \quad \text{if and only if} \quad \Phi(\mu) \in \{g\} \times Z\,.
    \]
\end{lemma}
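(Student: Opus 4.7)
The plan is to exploit the product structure of the metric $d$ to decompose the Fr\'echet function of $\mu$ along the factors $H$ and $Y$, and then to identify the first $H$-coordinate of $\Phi(\mu)$ geometrically as the (unique) Dirac mass $\delta_{(h,y_o)}$ closest to $\mu$ in $\PP_2(X)$.

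First, I would observe that for any $\mu \in \PP_2(X)$ the marginals $\mu_H = (p^H)_\#\mu$ and $\mu_Y = (p^Y)_\#\mu$ inherit finite $2$-moments (from the pointwise bounds $|h-h_0|^2,\ d_Y(y,y_0)^2 \leq d((h,y),(h_0,y_0))^2$), and that the product form of $d^{2}$ gives
\[
F_\mu(h,y) \;=\; F_{\mu_H}(h) + F_{\mu_Y}(y)\,.
\]
Applying Lemma \ref{L:uniq-bary} to $\mu_H$ produces a unique barycentre $b_{\mu_H}\in H$, while Remark \ref{rem:existence of frechet means in proper spaces} applied to $\mu_Y$ on the proper space $Y$ yields a non-empty compact minimising set $\calF_{\mu_Y}\subset Y$. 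Since the two summands in $F_\mu$ depend on independent variables and are minimised separately, I get $\calF_\mu = \{b_{\mu_H}\}\times\calF_{\mu_Y}$, which is non-empty and compact.

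For the equivalence, I would invoke Lemma \ref{L:split}(3) applied to the isometry $\Phi\colon\PP_2(X)\to H\times Z$: if $\Phi(\mu)=(g,z)$, then $\eta^{y_o}(g)=\delta_{(g,y_o)}$ is the \emph{unique} nearest point to $\mu$ on the parallel Dirac leaf $\im(\eta^{y_o})=\{\delta_{(h,y_o)}:h\in H\}$. A direct computation gives
\[
\W_2(\mu,\delta_{(h,y_o)})^2 \;=\; F_{\mu_H}(h) + F_{\mu_Y}(y_o)\,,
\]
which depends on $h$ only through $F_{\mu_H}(h)$, so Lemma \ref{L:uniq-bary} forces $g=b_{\mu_H}$. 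Combined with the first step, I conclude that $\calF_\mu\subset\{g\}\times Y$ if and only if $g=b_{\mu_H}$, if and only if $\Phi(\mu)\in\{g\}\times Z$.

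The only genuinely substantive step is matching the analytic description of $\calF_\mu$ (via the barycentre of $\mu_H$) with the geometric first coordinate of the splitting $H\times Z$. Both descriptions rely on the uniqueness of barycentres in Hilbert space provided by Lemma \ref{L:uniq-bary}, which simultaneously supplies the singleton first factor of $\calF_\mu$ and guarantees that the Wasserstein-projection onto $\im(\eta^{y_o})$ is well defined; this is where Hilbertness of $H$ is genuinely used, as opposed to just completeness or properness.
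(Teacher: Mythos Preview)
Your argument is correct, but it is organised differently from the paper's proof and in fact anticipates the content of the \emph{next} lemma. The paper proves Lemma~\ref{L:F-mean-set1} purely from the splitting geometry: writing $\Phi(\mu)=(g,z_\mu)$, it uses Lemma~\ref{L:split} to show that for \emph{every} $y\in Y$ the projection of $\mu$ onto the parallel leaf $\im(\eta^{y})$ lands at $\delta_{(g,y)}$, whence $F_\mu(h,y)=\W_2(\mu,\delta_{(h,y)})^2$ is strictly minimised in $h$ at $h=g$ for each fixed $y$; existence and compactness of $\calF_\mu$ then follow by restricting a minimising sequence to $\{g\}\times Y$ and invoking properness of $Y$. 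Only afterwards, in Lemma~\ref{L:F-mean-set2}, does the paper introduce the additive decomposition $F_\mu(h,y)=F_{\mu_H}(h)+F_{\mu_Y}(y)$ to connect the slice $\{g\}\times Y$ with the barycentre of the marginal $\mu_H$.

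You instead start from that decomposition, read off $\calF_\mu=\{b_{\mu_H}\}\times\calF_{\mu_Y}$ immediately, and then use a \emph{single} projection (onto $\im(\eta^{y_o})$, via Lemma~\ref{L:split}\ref{L:lines_c}) to match $b_{\mu_H}$ with the first coordinate $g$ of $\Phi(\mu)$. This is shorter and yields the explicit product description of $\calF_\mu$ for free, at the cost of merging the two lemmas: your proof of Lemma~\ref{L:F-mean-set1} already contains the substance of Lemma~\ref{L:F-mean-set2}. The paper's separation keeps the geometric statement (the $H$-coordinate of $\Phi(\mu)$ governs the slice containing $\calF_\mu$) independent of the marginal computation, which is conceptually tidier if one wants to emphasise the $\PP_2$-splitting structure; your route is more efficient if one is happy to compute with marginals from the outset.
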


\begin{proof}
Let $\mu\in\PP_2(X)$ with $\Phi(\mu) = (g, z_\mu) \in \{g\} \times Z \subset H\times Z$.  By lemma~\ref{L:split}, there exists a unique isometric embedding $\eta^\mu \colon H\to \PP_2(X)$ passing through $\mu$ which is parallel to $\eta^{y_o}$ and such that 
\begin{equation}
\label{eq:mu-line}
\eta^\mu(h) = \Phi^{-1}(h, z_\mu) = \proj_{\eta^\mu}(\eta^{y_o}(h)) = \proj_{\eta^\mu}(\delta_{(h, y_o)})\,.
\end{equation}
In particular, $\mu = \eta^\mu(g)$ and, by a similar argument, for every $y \in Y$ the isometric embedding $\eta^y = \eta^{\delta_{(0,y)}}$ is parallel to $\eta^{y_o}$ and satisfies $\eta^y(h) = \delta_{(h,y)} = \proj_{\eta^{y}}(\delta_{(h,y_o)})$.  

Therefore, again by lemma~\ref{L:split}, we have $\proj_{\eta^{y}}(\mu) = \delta_{(g, y)} = \eta^y(g)$, for every $y \in Y$.  Thus, by definition of the projection, the inequality
\begin{equation}
\label{eq:projection}
\W_2\left(\mu,\delta_{(g,y)}\right) = \W_2\left( \mu, \eta^y(g) \right) \leq \W_2\left( \mu, \eta^y(h) \right) = \W_2\left(\mu,\delta_{(h,y)}\right)
\end{equation} 
holds for all $h \in H$ and all $y \in Y$, with equality if and only if $ h = g$. 

On the other hand, the Fr\'echet function of $\mu$ is given by
\begin{equation}
\label{eq:F-function}
F_\mu(h,y) = \int_{X} d((h,y),(h',y'))^2\ d\mu(h',y') = \W_2(\mu,\delta_{(h,y)})^2 = \W_2(\mu, \eta^y(h))^2\,.    
\end{equation}
Together with \eqref{eq:projection}, this implies that $F_\mu(h,y) \geq F_\mu(g, y)$ for all $(h,y) \in X$, with equality if and only if $h = g$. Therefore, any Fr\'echet mean of $\mu$ is contained in $\{g\} \times Y$. 

Moreover, by the previous argument, if $\{(h_n,y_n)\}_{n\in\NN}$ is a minimising sequence for $F_\mu$, we can assume that $h_n=g$. Since $Y$ is proper, this sequence subconverges to a minimiser of $F_\mu$, that is, a Fr\'echet mean of $\mu$, which implies that $\calF_\mu$ is non-empty. Compactness follows from the properness of $Y$ and the readily verified fact that $\calF_\mu$ is always closed and bounded. 

Conversely, suppose that $\mu \in \PP_2(X)$ and that $\calF_\mu \subset \{g\} \times Y$.  If $\Phi(\mu) \in \{g'\} \times Z$ with $g' \neq g$, then the same argument as above would show that $F_\mu(g',y) \leq F_\mu(h,y)$ for all $h \in H$ and all $y \in Y$, with equality if and only if $h = g'$.  However, this implies that $F_\mu(g',y) < F_\mu(g,y)$ for all $y \in Y$, contradicting the fact that $\calF_\mu \subset \{g\} \times Y$.  Thus, $\Phi(\mu) \in \{g\} \times Z$, as desired.
\end{proof}

Suppose now that $p^H\colon X = H\times Y\to H$ and $p^Y\colon X = H\times Y\to Y$ are the canonical projections, and, for every $\mu \in \PP_2(X)$, let $\mu_H = p^H_\# \mu$ and $\mu_Y = p^Y_\# \mu$ denote the induced push-forward measures on $H$ and $Y$ respectively. Then the Fr\'echet-mean sets of $\mu$ and its push-forward $\mu_H$ are related in the following way.

\begin{lemma}
\label{L:F-mean-set2}
Let $(X,d)$ be a Polish metric space and $Y$ a proper metric space such that $X = H \times Y$ and $d$ is the product metric. 
Then, for all $\mu \in \PP_2(X)$,
    \[
    \calF_\mu \subset \{g\} \times Y \quad \text{if and only if} \quad \calF_{\mu_H} = \{g\}.
    \]
\end{lemma}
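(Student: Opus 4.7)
The plan is to exploit the fact that, since $d$ is the product metric, the Fréchet function $F_\mu$ decomposes as a sum of Fréchet functions of the marginals $\mu_H$ and $\mu_Y$, which reduces the problem to separately analysing minimisers on each factor.

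First, I would verify that both $\mu_H \in \PP_2(H)$ and $\mu_Y \in \PP_2(Y)$ have finite $2$-moments: this is immediate from the estimate $|h|^2, d_Y(y,y_o)^2 \leq d((h,y),(0,y_o))^2$ together with the finite $2$-moment of $\mu$. Then, writing $d((h,y),(h',y'))^2 = |h-h'|^2 + d_Y(y,y')^2$ and applying Fubini, I obtain the decomposition
\[
F_\mu(h,y) = \int_X |h-h'|^2\,d\mu(h',y') + \int_X d_Y(y,y')^2\,d\mu(h',y') = F_{\mu_H}(h) + F_{\mu_Y}(y).
\]
Since the two summands depend on independent variables, a point $(h,y)\in X$ minimises $F_\mu$ if and only if $h$ minimises $F_{\mu_H}$ and $y$ minimises $F_{\mu_Y}$. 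Equivalently,
\[
\calF_\mu = \calF_{\mu_H}\times \calF_{\mu_Y}.
\]

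Next, by Lemma~\ref{L:uniq-bary}, the measure $\mu_H$ on the Hilbert space $H$ has a unique barycenter, so $\calF_{\mu_H}$ is a singleton, say $\calF_{\mu_H}=\{g_\mu\}$. By Remark~\ref{rem:existence of frechet means in proper spaces}, $\calF_{\mu_Y}$ is non-empty (and compact) since $Y$ is proper. Thus
\[
\calF_\mu = \{g_\mu\}\times \calF_{\mu_Y} \subset \{g_\mu\}\times Y,
\]
and moreover $\calF_\mu$ is non-empty and actually lies in exactly one fibre of the projection $p^H$.

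Finally, the equivalence follows: if $\calF_{\mu_H}=\{g\}$, then $g_\mu=g$ and $\calF_\mu \subset \{g\}\times Y$; conversely, if $\calF_\mu \subset \{g\}\times Y$, then, since $\calF_\mu$ is non-empty and contained in $\{g_\mu\}\times Y$ with $g_\mu$ unique, we must have $g_\mu=g$, so $\calF_{\mu_H}=\{g\}$. The argument is essentially routine; the only point requiring a little care is the application of Fubini to justify the splitting of $F_\mu$ and the verification that the marginals have finite $2$-moments, but these are immediate consequences of the product-metric hypothesis.
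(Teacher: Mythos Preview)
Your proof is correct and, at its core, rests on the same decomposition $F_\mu(h,y)=F_{\mu_H}(h)+F_{\mu_Y}(y)$ that the paper uses. The route, however, is more direct than the paper's: you extract the product formula $\calF_\mu=\calF_{\mu_H}\times\calF_{\mu_Y}$ straight from the decomposition, then invoke Lemma~\ref{L:uniq-bary} and Remark~\ref{rem:existence of frechet means in proper spaces} to conclude. The paper instead first appeals to Lemma~\ref{L:F-mean-set1} (which relies on the Wasserstein splitting isometry $\Phi$ and the projection argument) to rephrase $\calF_\mu\subset\{g\}\times Y$ as the pointwise inequality $F_\mu(g,y)\le F_\mu(h,y)$, and only then applies the decomposition. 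Your argument is more elementary and self-contained---it never touches the $\PP_2$-splitting machinery---while the paper's version keeps the lemma tied into the surrounding narrative about the isometry $\Phi$, which is what it ultimately needs for Theorem~\ref{t:flexibility of spaces with lines}.
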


\begin{proof}
By lemma~\ref{L:F-mean-set1}, together with \eqref{eq:mu-line}, \eqref{eq:projection} and \eqref{eq:F-function}, the condition $\calF_\mu \subset \{g\} \times Y$ is equivalent to the inequality
\begin{equation}
\label{eq:F-ineq}
F_\mu(g,y) \leq F_\mu(h,y)
\end{equation}
for all $h \in H$ and all $y \in Y$, with equality if and only if $h=g$.  On the other hand, since $\mu_H = p^H_\# \mu$ and $\mu_Y = p^Y_\# \mu$ are the marginals of $\mu$, we have
\begin{align*}
F_\mu(h,y) 
&=\int_{X} d((h,y), (h',y'))^2\ d\mu(h',y')\\
&=\int_{H} |h'-h|^2\ d\mu_H (h')+\int_{Y} d_Y(y',y)^2\ d\mu_Y(y')\\
&= F_{\mu_H}(h) + F_{\mu_Y}(y)\,.
\end{align*}
Therefore, the inequality \eqref{eq:F-ineq} holds if and only if 
\[
F_{\mu_H}(g) \leq F_{\mu_H}(h)
\]
for all $h \in H$, with equality if and only if $h = g$.  Since, by lemma~\ref{L:uniq-bary}, barycenters are unique for elements of $\PP_2(H)$, it follows that this inequality is equivalent to $\calF_{\mu_H} = \{g\}$.
\end{proof}

Kloeckner showed in \cite{kloeckner10} that $\Isom(\PP_2(\RR^n)) \cong \Isom(\RR^n)\ltimes \mathrm{O}(n)$ for $n\geq 2$, where the left factor is given by trivial isometries, i.e.\ maps of the form $\varphi_\#\colon \PP_2(\RR^n)\to \PP_2(\RR^n)$ for some $\varphi \in \Isom(\RR^n)$, and the right factor is given by isometries that fix all Dirac measures. These isometries still have a precise description as follows: for $\psi \in \mathrm{O}(n)$ and $\mu\in \PP_2(\RR^n)$, define 
\begin{equation}\label{eq:exotic action of o(n)}
\Psi(\mu) = (\tau_\mu \circ \psi \circ \tau_\mu^{-1})_\#(\mu), 
\end{equation}
where $\tau_\mu\colon \RR^n \to \RR^n$ is translation by the barycenter $\beta_\mu \in \RR^n$ of $\mu$, which exists and is unique by lemma~\ref{L:uniq-bary}. More generally, Geh\'er, Titkos and Virotsztek proved in \cite{GeherTitkosVirosztek2022} that the group of linear isometries of a separable Hilbert space $H$ acts on $\PP_2(H)$ via exotic isometries, where the action is defined analogously to \eqref{eq:exotic action of o(n)}. In the one-dimensional case, there exist even more isometries than those just described (see \cite{kloeckner10}), however, they are not needed to construct exotic isometries in our setting. 

In preparation for the proof of theorem~\ref{t:flexibility of spaces with lines}, we define the maps that will generate exotic isometries. For a linear isometry $\psi\colon H \to H$, let $\Psi_\psi \colon \PP_2(X)\to \PP_2(X)$ be given by
\begin{equation}
\label{eq:exotic-isom}
\Psi_\psi(\mu) = (\tau_{\mu_H} \circ \psi \circ \tau_{\mu_H}^{-1}\circ p^H,p^Y)_\#(\mu),
\end{equation}
where $\mu_H = p^H_\# \mu \in \PP_2(H)$. By lemmas \ref{L:F-mean-set1} and \ref{L:F-mean-set2}, the map $\tau_{\mu_H} \colon H \to H$ is translation by the barycenter $\beta_{\mu_H} = g \in H$ whenever $\Phi(\mu) \in \{g\} \times Z$.

\begin{proof}[Proof of theorem~\ref{t:flexibility of spaces with lines}]
We will show that the map $\Psi_\psi \colon \PP_2(X) \to \PP_2(X)$ given by \eqref{eq:exotic-isom} is an exotic isometry of $\PP_2(X)$ whenever $\psi\colon H\to H$ is a non-trivial linear isometry.  First, observe that if $(h,y) \in X$ and $\mu = \delta_{(h,y)}$, then $\mu_H = p^H_\# \delta_{(h,y)} = \delta_h \in \PP_2(H)$ has (by lemma~\ref{L:uniq-bary}) its unique barycenter at $t \in H$ and, moreover, $(\tau_{\mu_H} \circ \psi \circ \tau_{\mu_H}^{-1},\id_Y)(h,y) = (h,y)$.  Therefore, the map $\Psi_\psi$ satisfies $\Psi_\psi(\delta_{(h,y)}) = \delta_{(h,y)}$, for all $(h,y) \in X$.  As a result, the only 
measurable map $f\colon X \to X$ that could satisfy $\Psi_\psi = f_\#$ would be the identity map $\id_X$, since $f_\# \delta_{(h,y)} = \delta_{f(h,y)}$ for all $(h,y) \in X$. In this case it would follow that $\Psi_\psi = \id_{\PP_2(X)}$. 

However, it is easy to show that $\Psi_\psi \neq \id_{\PP_2(X)}$.  Indeed, if $v\in H$ is not fixed by $\psi$ and $\mu = \frac{1}{3}\delta_{(0, y)} + \frac{2}{3}\delta_{(v, y)} \in \PP_2(X)$, then $\mu_H = \frac{1}{3}\delta_{0} + \frac{2}{3}\delta_{v}$ and, by (the proof of) lemma~\ref{L:uniq-bary}, $\mu_H$ has its unique barycenter at $\frac{2}{3}v$.
In particular, this implies that 
\[
\tau_{\mu_H} \circ \psi \circ \tau_{\mu_H}^{-1}(0) = \frac{2}{3}v - \frac{2}{3}\psi(v)\neq 0\,.
\]
Therefore, since $\tau_{\mu_H} \circ \psi \circ \tau_{\mu_H}^{-1}$ is invertible,
\[
\Psi_\psi(\mu)\left(\left\{\frac{2}{3}v - \frac{2}{3}\psi(v)\right\} \times Y\right) = \mu \left(\{0\}\times Y\right) = \frac{1}{3} \neq 0 = \mu\left(\left\{\frac{2}{3}v - \frac{2}{3}\psi(v)\right\} \times Y\right)\,.
\]
In other words, the map $\Psi_\psi \colon \PP_2(X) \to \PP_2(X)$ cannot be the identity map and, hence, is not induced by any 
measurable map $f \colon X \to X$.

It remains to show that $\Psi_\psi$ is an isometry.  To that end, let $\mu,\nu \in \PP_2(X)$, and let $\pi\in\Opt(\mu,\nu)$. Define 
\[
\widetilde{\pi} = ((\tau_{\mu_H} \circ \psi \circ \tau_{\mu_H}^{-1}\circ p^H,p^Y)\circ p^1,(\tau_{\nu_H} \circ \psi \circ \tau_{\nu_H}^{-1}\circ p^H,p^Y)\circ p^2)_\#\pi.
\]
A straightforward computation shows that $\widetilde{\pi}$ is a transport plan between $\Psi_\psi(\mu)$ and $\Psi_\psi(\nu)$.

Moreover,
\begin{align*}
\W_2(\Psi_\psi(\mu),\Psi_\psi(\nu))^2 &\leq \int_{X\times X} 
d((h_1,y_1),(h_2,y_2))^2\ d\widetilde{\pi}((h_1,y_1),(h_2,y_2))\\
&= \int_{X\times X} d((\tau_{\mu_H}\circ\psi\circ \tau_{\mu_H}^{-1}(h_1),y_1),(\tau_{\nu_H}\circ\psi\circ \tau_{\nu_H}^{-1}(h_2),y_2))^2\ d\pi((h_1,y_1),(h_2,y_2))\\
&= 
\int_{H\times H} |\tau_{\mu_H} \circ \psi \circ \tau_{\mu_H}^{-1}(h_1)-\tau_{\nu_H} \circ \psi \circ \tau_{\nu_H}^{-1}(h_2)|^2\ d\pi^H(h_1,h_2)\\
&\qquad\qquad +\int_{Y\times Y} d_Y(y_1,y_2)^2\ d\pi^Y(y_1,y_2)\\
&= \int_{H\times H} |h_1-h_2|^2\ d\pi^H(h_1,h_2)+\int_{Y\times Y} d_Y(y_1,y_2)^2\ d\pi^Y(y_1,y_2)\\
&=\int_{X\times X} d_X((h_1,y_1),(h_2,y_2))^2\ d\pi((h_1,y_1),(h_2,y_2))\\
&=\W_2(\mu,\nu)^2,
\end{align*}
where $\pi^H = (p^H\circ p^1,p^H\circ p^2)_\#\pi$ and $\pi^Y = (p^Y\circ p^1,p^Y\circ p^2)_\#\pi$. Observe that we have used \cite[Corollary 3.14]{GeherTitkosVirosztek2022} in the fourth line of the previous chain of equations.

Now, by the same argument applied to $\Psi_{\psi^{-1}}=\Psi^{-1}_\psi$, we conclude that
\[
\W_2(\Psi_\psi(\mu),\Psi_\psi(\nu))=\W_2(\mu,\nu),
\] 
thus establishing the result.
\end{proof}

\section{Proof of theorem~\ref{t:rigidity of rays}: Isometric rigidity of rays}
\label{s:rigidity of rays}
The approach in \cite{kloeckner10} can be adapted to our setting for $p=2$. The proof starts by showing that Dirac measures are invariant via a metric characterisation. We then show that atomic measures with two atoms are also invariant and use the density of their convex hull to complete the proof. The key difference from \cite{kloeckner10} is in how we characterise Dirac measures and atomic measures with two atoms.

For $p\geq 1$ and $p\neq 2$, we adapt the arguments in \cite{GTV20}. Specifically, for $p>1$ and $p\neq 2$, we directly apply Mankiewicz theorem (see \cite[Theorem 3.16]{GTV20}). In the case $p=1$, the main technical ingredient is Claim \ref{claim:characterisation.sigma}, which metrically characterises a set of atomic measures whose invariance implies isometric rigidity. This is an adaptation of \cite[Claim 3.6]{GTV20}.

\begin{lemma}
If $\Phi\in \Isom(\PP_p([0,\infty)))$, then $\Phi(\delta_0)=\delta_0$.
\end{lemma}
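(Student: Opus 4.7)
The plan is to characterize $\delta_0$ by a metric condition preserved by every isometry of $\PP_p([0,\infty))$. Specifically, I claim that $\delta_0$ is the unique $\mu \in \PP_p([0,\infty))$ for which no $\nu_1, \nu_2 \in \PP_p([0,\infty))$ satisfy both $\W_p(\nu_1, \mu), \W_p(\mu, \nu_2) > 0$ and the tight-triangle equation
\[
\W_p(\nu_1, \mu) + \W_p(\mu, \nu_2) = \W_p(\nu_1, \nu_2).
\]
Since this condition depends only on the Wasserstein metric, it is preserved by $\Phi$, and hence $\Phi(\delta_0) = \delta_0$ follows at once.

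To show that every $\mu \in \PP_p([0,\infty)) \setminus \{\delta_0\}$ fails the characterization, let $T_2(x) = 2x$ and set $\mu_2 = (T_2)_\# \mu$. A direct computation gives $\W_p(\delta_0, \mu_2) = 2\W_p(\delta_0, \mu)$, while the coupling $(\id, T_2)_\# \mu$ provides the bound $\W_p(\mu, \mu_2) \leq \W_p(\delta_0, \mu)$; the triangle inequality then forces equality, so $\mu \in M^{1/2}(\delta_0, \mu_2)$ with both distances positive, since $\mu \neq \delta_0$ implies $\mu_2 \neq \delta_0$.

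The harder direction is to show that $\delta_0$ itself enjoys the claimed property. I argue by contradiction: suppose $\nu_1, \nu_2$ realize the forbidden configuration with $\mu = \delta_0$, and pick $\pi \in \Opt(\nu_1, \nu_2)$. Writing $\W_p(\delta_0, \nu_i)$ as $L^p(\pi)$-norms of the coordinate functions $x$ and $y$, the tight-triangle identity reads
\[
\|x - y\|_{L^p(\pi)} = \|x\|_{L^p(\pi)} + \|y\|_{L^p(\pi)}.
\]
Combined with the pointwise bound $|x-y| \leq x + y$ on $[0,\infty)^2$ and Minkowski's inequality, this forces $xy = 0$ for $\pi$-a.e.\ $(x,y)$, so $\pi$ is concentrated on the union of the coordinate axes. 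Since both $\nu_i \neq \delta_0$, the restrictions $\nu_i|_{(0,\infty)}$ have positive mass, yielding points $(x_1, 0)$ and $(0, y_2)$ in $\supp(\pi)$ with $x_1, y_2 > 0$. Cyclical monotonicity of $\supp(\pi)$ applied to these two pairs then demands $x_1^p + y_2^p \leq |x_1 - y_2|^p$. This is absurd: for $x_1, y_2 > 0$ and $p \geq 1$ one has $|x_1 - y_2|^p \leq \max(x_1, y_2)^p < x_1^p + y_2^p$.

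The main technical point is finding a framework that handles all $p \geq 1$ uniformly: the cyclical monotonicity characterization of optimal plans, recalled in the Preliminaries, achieves this without splitting into cases, unlike arguments relying on the equality case of $L^p$-Minkowski (vacuous for $p = 1$) or on the CDF representation of $\W_1$ (unavailable for $p > 1$).
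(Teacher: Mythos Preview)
Your proof is correct and rests on the same metric characterization as the paper's: $\delta_0$ is the unique measure that lies in the interior of no geodesic segment. The paper's own proof of this lemma is essentially a one-line assertion (geodesics from $\delta_0$ extend as rays past the other endpoint but never past $\delta_0$), with the hard direction left implicit; a detailed verification appears only later, in the separate $p=1$ argument, and there it is carried out via inverse distribution functions. Your argument is more explicit and, as you note, uniform in $p$: reducing the tight-triangle equality to $xy=0$ $\pi$-a.e.\ via the chain $\|x-y\|_{L^p(\pi)}\le\|x+y\|_{L^p(\pi)}\le\|x\|_{L^p(\pi)}+\|y\|_{L^p(\pi)}$, and then extracting the contradiction from cyclical monotonicity of $\supp(\pi)$ on the pair $\{(x_1,0),(0,y_2)\}$. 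This last step is a nice touch that sidesteps both the equality case of Minkowski and the CDF machinery, so the same short computation covers $p=1$ and $p>1$ simultaneously.
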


\begin{proof}
For any $\mu \in \PP_p([0,\infty))$, $\mu\neq \delta_0$, the unique geodesic from $\delta_0$ to $\mu$ extends past $\mu$ as an infinite ray. However, such  geodesic cannot be extended past $\delta_0$. This metric characterisation of $\delta_0$ ensures that $\delta_0$ is fixed by $\Isom(\PP_p([0,\infty)))$.
\end{proof}

\begin{lemma}\label{l:sigma}
The set 
\begin{equation}\label{eq:sigma}
\Sigma =\{(1-\lambda)\delta_0+\lambda \delta_x:\lambda\in [0,1],\ x\geq 0\}
\end{equation}
is metrically characterised by the following: $\mu\in\Sigma$ if and only if for any maximal infinite ray $(\mu_t)_{t\in [0,\infty)}$ with $\mu_1=\mu$, we have $\mu_0=\delta_0$. 
\end{lemma}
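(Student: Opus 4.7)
The plan is to exploit the standard identification of constant-speed geodesic rays in $\PP_p([0,\infty))$ with translations of quantile functions. Concretely, any constant-speed geodesic ray $(\mu_t)_{t\in[0,\infty)} \subset \PP_p([0,\infty))$ admits the representation
\[
F_{\mu_t}^{-1}(s) = F_{\mu_0}^{-1}(s) + t\, v(s),\qquad s \in [0,1],
\]
for some $v \in L^p([0,1])$, where $v$ is forced to be non-negative and non-decreasing by the requirement that $F_{\mu_0}^{-1} + t v$ be a bona fide quantile function of a probability measure on $[0,\infty)$ for every $t \geq 0$. Maximality backward is then equivalent to the failure of $F_{\mu_0}^{-1} - \varepsilon v$ to be a valid quantile function for every $\varepsilon > 0$.

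For the forward implication, assume $\mu = (1-\lambda)\delta_0 + \lambda \delta_x \in \Sigma$ with $\lambda \in (0,1]$ and $x > 0$ (the case $\mu = \delta_0$ is vacuous, as no non-trivial ray ends at $\delta_0$). Then $F_\mu^{-1} = x \cdot \mathbf{1}_{[1-\lambda, 1]}$, and the constraint $F_{\mu_0}^{-1} = F_\mu^{-1} - v$ combined with the above positivity/monotonicity requirements forces $v$ to be a step function $v = c \cdot \mathbf{1}_{[1-\lambda, 1]}$ for some $c \in [0, x]$. If $c < x$ one could extend the ray to $t = -\varepsilon$ for $\varepsilon \leq x/c - 1$, contradicting maximality; hence $c = x$, giving $\mu_0 = (1-\lambda)\delta_0 + \lambda \delta_0 = \delta_0$.

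For the converse, assume $\mu \notin \Sigma$, so that $\supp(\mu)$ contains two distinct positive points $a_1 < a_2$. Choose $\ell \in (a_1, a_2)$ with $\mu(\{\ell\}) = 0$ (possible since $\mu$ has at most countably many atoms) and set $s_* := F_\mu(\ell) \in (0,1)$. Define the non-negative, non-decreasing function $v(s) := (F_\mu^{-1}(s) - \ell)_+$ and consider the ray $(\mu_t)_{t \geq 0}$ determined by
\[
F_{\mu_t}^{-1}(s) = \bigl(F_\mu^{-1}(s) \wedge \ell\bigr) + t\,\bigl(F_\mu^{-1}(s) - \ell\bigr)_+.
\]
A routine check gives $\mu_1 = \mu$, while $\mu_0 = \mu|_{[0,\ell)} + (1-s_*)\delta_\ell \neq \delta_0$ since $\ell > 0$ and $1-s_* > 0$. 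Because $\mu(\{\ell\}) = 0$ forces $F_\mu^{-1}(s) > \ell$ on a subset of $(s_*,1]$ of positive Lebesgue measure, any attempted extension to $t < 0$ yields $F_{\mu_t}^{-1}(s) < \ell$ on that subset, violating the monotonicity across $s_*$; thus the ray is maximal.

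The main technical obstacle is the forward direction, which requires ruling out \emph{all} geodesic rays through $\mu$, not only the quantile-interpolation ones. This is standard and painless for $p > 1$, where geodesics in $\PP_p(\mathbb{R})$ are unique and necessarily of quantile form; for $p=1$ (not needed in the subsequent application of this lemma, but worth noting) one must argue separately that even the extra $W_1$-geodesics remain trapped inside $\Sigma$-valued trajectories.
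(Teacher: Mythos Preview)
Your forward implication is correct and mirrors the paper's argument: both use the quantile representation of geodesic rays, deduce that the direction $v$ must be a single step, and then invoke maximality to pin down $c=x$.

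Your converse implication has a genuine gap: the ray you construct is \emph{not} maximal in general. Take $\mu=\tfrac12\delta_{a_1}+\tfrac12\delta_{a_2}$ with $0<a_1<a_2$ and any $\ell\in(a_1,a_2)$; then $s_*=1/2$, $v=(a_2-\ell)\mathbf 1_{(1/2,1]}$, and for $t=-\varepsilon$ your quantile reads $a_1$ on $(0,1/2]$ and $\ell-\varepsilon(a_2-\ell)$ on $(1/2,1]$. For small $\varepsilon>0$ this is still non-decreasing (the plateau $\ell-\varepsilon(a_2-\ell)$ remains above $a_1$), so the ray extends backward. Your sentence ``violating the monotonicity across $s_*$'' is exactly where the argument breaks: the values on $(0,s_*]$ can be strictly below $\ell$, leaving room for the values on $(s_*,1]$ to drop without crossing them. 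More generally, your ray fails to be maximal whenever $F_\mu^{-1}$ is constant on $(s_*,1]$, i.e.\ whenever $\mu|_{(\ell,\infty)}$ is a Dirac mass---and for two-atom measures no choice of $\ell$ avoids this.

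The paper sidesteps the issue by anchoring the plateau at the quantile value itself: it picks $m_1$ with $0<G_\mu^{-1}(m_1)<G_\mu^{-1}(m_2)$ and sets $G_{\mu_0}^{-1}=G_\mu^{-1}(\,\cdot\wedge m_1)$, so the plateau level on $(m_1,1)$ equals $G_\mu^{-1}(m_1)$. Any backward step then forces the value at $m_2$ strictly below $G_\mu^{-1}(m_1)=G_{\mu_t}^{-1}(m_1)$, and maximality is immediate. Your construction is easily repaired the same way---replace $\ell$ by $F_\mu^{-1}(s_*)$, or equivalently first pick $s_*$ with $0<F_\mu^{-1}(s_*)<\operatorname{ess\,sup}F_\mu^{-1}$---but as written the maximality claim is false.
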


\begin{proof}
First, consider $\mu\in \Sigma$. If $\mu= \delta_0$, the condition holds trivially, since no geodesic rays contain $\delta_0$ in their interior. Now assume $\mu = (1-\lambda)\delta_0+\lambda\delta_x$ for some $\lambda\in (0,1]$ and $x>0$. In this case, $\mu$ is an atomic measure with at most two atoms. If $(\mu_t)_{t\in [0,\infty)}$ is an infinite geodesic passing through $\mu$, then $\mu_0$ is also an atomic measure with at most two atoms (see, for example, \cite[Proposition 2.16]{ambrosio-gigli}). Following a similar argument to that of \cite{kloeckner10}, we prove that $\mu_0=\delta_0$ as follows.

If $\mu_0 \in \Delta_2$ then $\mu_0 = (1-\lambda)\delta_y+\lambda \delta_z$ for some $y,z\in [0,\infty)$. Since the inverse distribution function of $\mu_t$ is well-defined for all $t\geq 0$ and given by
\[
G_{\mu_t}^{-1} = (1-t)G^{-1}_{\mu_0}+tG^{-1}_{\mu_1} = \chi_{(0,1-\lambda]}(1-t)y+\chi_{(1-\lambda,1)}((1-\lambda)z+\lambda x),
\]
where $G^{-1}_{\mu_0}$ and $G^{-1}_{\mu_1}$ are the corresponding inverse distribution functions of $\mu_0$ and $\mu_1$, it follows that $y=0$ or $\lambda = 1$; otherwise, we would have $G^{-1}_{\mu_t}(m)<0$ for any $m\in (0,1-\lambda)$ and $t>1$.

On the other hand, if $\mu\not\in \Sigma$, there exist $0<m_1<m_2<1$ such that $0<G^{-1}_\mu(m_1)<G^{-1}_\mu(m_2)$, where $G^{-1}_\mu$ is the left-continuous inverse distribution function corresponding to $\mu$. For each $t\in[0,\infty)$, let $\mu_t\in \PP_p([0,\infty))$ be defined by the inverse distribution function
\[
G_{\mu_t}^{-1}= \chi_{(0,m_1]}G^{-1}_{\mu}+\chi_{(m_1,1)}((1-t)G^{-1}_\mu(m_1)+tG^{-1}_{\mu}).
\]
It follows that $(\mu_t)_{t\in[0,\infty)}$ is an infinite geodesic ray with $\mu_1=\mu$, but $\mu_0\neq \delta_0$. 
\end{proof}

\begin{lemma}\label{l:invariance of dirac measures}
If $\Phi\in \Isom(\PP_p([0,\infty)))$, then $\Phi|_{\Delta_1}=\id_{\Delta_1}$.    
\end{lemma}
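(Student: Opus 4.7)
The plan is to build on the two preceding results in the section, which give $\Phi(\delta_0) = \delta_0$ and $\Phi(\Sigma) = \Sigma$. Since $\Phi$ is an isometry fixing $\delta_0$, it also preserves every metric sphere $S_r := \{\mu \in \PP_p([0,\infty)) : \W_p(\delta_0,\mu) = r\}$, and combined with the invariance of $\Sigma$ this forces $\Phi$ to restrict to an isometry of the curve
\[
C_r \;:=\; \Sigma \cap S_r \;=\; \bigl\{\mu_\lambda^r := (1-\lambda)\delta_0 + \lambda\delta_{r/\lambda^{1/p}} : \lambda \in (0,1]\bigr\}
\]
for every $r \geq 0$. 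Since $\delta_r = \mu_1^r \in C_r$, it will suffice to give a metric characterisation of $\delta_r$ inside $C_r$.

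To that end, I would compute the restricted metric on $C_r$ via the closed form $\W_p((1-\lambda_1)\delta_0 + \lambda_1\delta_{x_1}, (1-\lambda_2)\delta_0+\lambda_2\delta_{x_2})^p = (\lambda_2-\lambda_1)x_2^p + \lambda_1|x_1-x_2|^p$, valid for $\lambda_1 \leq \lambda_2$, obtained directly from inverse distribution functions. Substituting $x_i = r/\lambda_i^{1/p}$ and simplifying yields
\[
\W_p(\mu_{\lambda_1}^r, \mu_{\lambda_2}^r)^p \;=\; r^p\bigl[(1-\rho^p) + (1-\rho)^p\bigr], \qquad \rho := (\lambda_1/\lambda_2)^{1/p} \in (0,1].
\]
Since $g(\rho) := (1-\rho^p) + (1-\rho)^p$ satisfies $g'(\rho) = -p[\rho^{p-1} + (1-\rho)^{p-1}] < 0$ on $(0,1)$, it is strictly decreasing on $[0,1]$, and consequently this Wasserstein distance is a strictly decreasing function of $\rho$.

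I would then observe that $\delta_r$ is metrically distinguished inside $C_r$ as the unique point $\mu$ for which the restriction $\W_p(\mu,\cdot)|_{C_r}$ is injective. Indeed, if $\mu = \mu_1^r = \delta_r$, then the parameter $\lambda$ ranges only on one side of $\lambda_0 = 1$ (namely $\lambda \in (0,1]$), so $\W_p(\delta_r, \mu_\lambda^r)$ is strictly monotonic in $\lambda$, hence injective on $C_r$. On the other hand, for $\mu = \mu_{\lambda_0}^r$ with $\lambda_0 < 1$, the parameter $\lambda$ varies on both sides of $\lambda_0$, and the distance function vanishes at $\lambda_0$ and increases strictly in both directions, so every sufficiently small positive distance is attained twice. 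Since $\Phi|_{C_r}$ preserves this injectivity property, it follows that $\Phi(\delta_r) = \delta_r$ for every $r \geq 0$. The main technical obstacle is essentially bookkeeping — verifying the closed-form distance expression on $C_r$ and the monotonicity of $g$ — rather than a genuine conceptual difficulty, and the argument runs uniformly for all $p \geq 1$.
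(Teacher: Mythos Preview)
Your proof is correct and follows essentially the same route as the paper: both characterise $\delta_r$ as the unique element of $\Sigma \cap S_r$ whose distance function restricted to that set is injective, via the same closed-form computation $\W_p^p(\mu_{\lambda_1}^r,\mu_{\lambda_2}^r)=r^p[(1-\rho^p)+(1-\rho)^p]$. The only cosmetic difference is that the paper carries this out at $r=1$ (exhibiting the explicit collision $\W_p^p(\mu_x,\delta_1)=\W_p^p(\mu_x,\mu_{x^2})$ for $x>1$) and then propagates to all $\delta_t$ using the unique geodesic ray through $\delta_0,\delta_1$ and non-branching, whereas you apply the injectivity characterisation directly at every radius $r$, which is slightly more streamlined.
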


\begin{proof}
Let 
\begin{align}
\Sigma_1 = \{\mu\in \Sigma:\W_p(\mu,\delta_0) = 1\}= \left\{\mu_x=\left(1-\frac{1}{x^p}\right)\delta_0+\frac{1}{x^p}\delta_x:x\geq 1\right\}.
\end{align}
Since both $\Sigma$ and $\{\delta_0\}$ are invariant, it follows that $\Sigma_1$ is also invariant. Clearly, $\delta_1\in \Sigma_1$. A direct computation shows that
\[
\W_p^p(\delta_1,\mu_x) = \left(1-\frac{1}{x^p}\right)+\left(1-\frac{1}{x}\right)^p \quad \text{and}\quad
\W_p^p(\mu_x,\mu_y) = \left(1-\frac{x^p}{y^p}\right)+\left(1-\frac{x}{y}\right)^p
\]
for any $1\leq x\leq y$. In particular, for any $s\in [0,2)$, there is exactly one $x\geq 1$ such that $\W_p^p(\delta_1,\mu_x) = s$. Moreover, since $\W_p^p(\mu_x,\delta_1)=\W_p^p(\mu_x,\mu_{x^2})$ for any $x>1$, it follows that $\delta_1$ is invariant. Furthermore, since for any $t\in (0,\infty)$ there is a unique geodesic ray joining $\delta_0$ with $\delta_t$, which passes through $\delta_1$, and $\PP_p([0,\infty))$ is non-branching, the claim follows.
\end{proof}

\begin{lemma}\label{l:invariance of delta2}
If $\Phi\in \Isom(\PP_p([0,\infty)))$, then $\Phi|_{\Delta_2}=\id_{\Delta_2}$.
\end{lemma}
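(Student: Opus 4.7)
The plan is to treat the cases $p\in(1,\infty)\setminus\{2\}$, $p=1$, and $p=2$ separately, following the road map announced at the start of Section~\ref{s:rigidity of rays}.

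For $p\in(1,\infty)\setminus\{2\}$, the strategy is to exploit the isometric embedding
\[
\iota\colon \PP_p([0,\infty))\hookrightarrow L^p((0,1)),\quad \iota(\mu)=G^{-1}_\mu,
\]
whose image is a closed convex subset of $L^p((0,1))$ in which constant functions correspond precisely to Dirac measures. Since $\Phi$ fixes every Dirac measure by Lemma~\ref{l:invariance of dirac measures}, the induced isometry $T$ on $\iota(\PP_p([0,\infty)))$ fixes every constant function. Applying the version of Mankiewicz's theorem recalled in \cite[Theorem 3.16]{GTV20}, $T$ extends to an affine isometry of $L^p((0,1))$; for $p\neq 2$ this extension is then pinned down as the identity by Lamperti's description of $L^p$-isometries together with the constraints that constants are fixed and that the non-decreasing non-negative cone is preserved. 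In particular, two-level step functions---the images of $\Delta_2$---are fixed.

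For $p=1$, I would argue directly from the embedding without invoking Mankiewicz. Since $\Phi$ fixes Diracs, $G^{-1}_\mu$ and $G^{-1}_{\Phi(\mu)}$ have the same mean absolute deviation function
\[
c\longmapsto \int_0^1\bigl|G^{-1}(s)-c\bigr|\,ds,
\]
and a classical computation (differentiation gives $2F_{\mathrm{cdf}}-1$) shows that this function determines the distribution of its argument, viewed as a random variable on $(0,1)$. Since $G^{-1}_\mu$ and $G^{-1}_{\Phi(\mu)}$ are both non-decreasing on $(0,1)$, having the same distribution forces them to coincide almost everywhere. Hence $\Phi=\id_{\PP_1([0,\infty))}$, which in particular gives $\Phi|_{\Delta_2}=\id_{\Delta_2}$.

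The subtle case is $p=2$. Here the Fr\'echet function of $\mu=(1-\lambda)\delta_a+\lambda\delta_b$ (with $0\leq a<b$ and $\lambda\in(0,1)$) only determines the mean and variance of $\Phi(\mu)$, admitting a one-parameter family of realisations supported in $[0,\infty)$. Following Kloeckner's strategy, I would first show that $\Phi$ preserves $\Delta_2$ as a set via a metric characterization of two-atomic measures (for instance, singling them out by a variational property of distances to Diracs that multi-atom measures cannot share), and then recover the parameter $\lambda$ from $\W_2(\Phi(\mu),\delta_a)=\sqrt{\lambda}(b-a)$ to conclude $\Phi(\mu)=\mu$. The principal obstacle is the first step, because on $\PP_2(\RR)$ Kloeckner's exotic reflection-about-barycenter isometry preserves $\Delta_2$ without fixing it pointwise. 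The critical input to exclude such exotic behaviour here is the support constraint $\supp(\Phi(\mu))\subset[0,\infty)$, which generically sends a reflected measure outside the admissible half-line and thus cannot arise as $\Phi(\mu)$ for an isometry of $\PP_2([0,\infty))$.
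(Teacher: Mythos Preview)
Your $p=2$ case has a genuine gap: you announce that you ``would first show that $\Phi$ preserves $\Delta_2$ as a set via a metric characterization of two-atomic measures,'' but you never supply this characterization, and you correctly identify it as ``the principal obstacle.'' The subsequent steps (recovering $\lambda$ from $\W_2(\Phi(\mu),\delta_a)$, invoking the support constraint to rule out reflections) are all predicated on already knowing $\Phi(\mu)\in\Delta_2$, so without that first step nothing goes through. This is not a minor detail to fill in later---it is the entire content of the lemma for $p=2$.

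By contrast, the paper gives a short argument that is \emph{uniform in $p\geq 1$} and never needs a direct metric characterization of $\Delta_2$. The key observation, which you are missing, is that the set $\Sigma=\{(1-\lambda)\delta_0+\lambda\delta_x\}$ has already been shown invariant (Lemma~\ref{l:sigma}), and the proof of Lemma~\ref{l:invariance of dirac measures} actually pins down $\Sigma_1=\{\mu\in\Sigma:\W_p(\mu,\delta_0)=1\}$ \emph{pointwise}. From there $\Sigma$ is fixed pointwise (each $\mu\in\Sigma$ lies on the unique geodesic ray from $\delta_0$ through some $\widetilde\mu\in\Sigma_1$), and then any $(1-\lambda)\delta_a+\lambda\delta_b\in\Delta_2$ with $0<a<b$ sits on a unique geodesic joining a fixed measure $(1-\lambda)\delta_0+\lambda\delta_x\in\Sigma$ (with $x>b$) to a fixed Dirac $\delta_y$ (with $y\in(a,b)$). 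Both endpoints being fixed and the geodesic being unique forces $\mu$ to be fixed.

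Your arguments for $p\neq 2$ are correct but prove strictly more than the lemma asks (you establish $\Phi=\id$ on all of $\PP_p([0,\infty))$); these are in fact the arguments the paper deploys \emph{after} Lemma~\ref{l:invariance of delta2}, in the proof of Theorem~\ref{t:rigidity of rays}. The $p=1$ mean-absolute-deviation trick is a nice alternative to the paper's adjacency-based argument. But splitting into cases here is unnecessary, and for $p=2$ it leaves you without a proof.
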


\begin{proof}
From the proof of lemma~\ref{l:invariance of dirac measures}, we see that $\Sigma_1$ is fixed pointwise. Indeed, since $\delta_1$ is fixed, and for any $s\in [0,2)$ there is exactly one $\mu\in\Sigma_1$ such that $\W_p(\delta_1,\mu) = s$, and because the set $\Sigma_1$ is invariant, it follows that $\Phi(\mu) = \mu$. This also implies that $\Sigma$ is fixed pointwise: for any $\mu\in \Sigma$, there is exactly one geodesic joining $\delta_0$ and $\mu$ which passes through some measure $\widetilde{\mu}\in\Sigma_1$. Since both $\delta_0$ and $\widetilde{\mu}$ are fixed, we conclude that $\mu$ is also fixed.

Finally, if $\mu = (1-\lambda)\delta_a+\lambda\delta_b\in \Delta_2$ with $0<a<b$, then for any $x>b$ there is some $y\in (a,b)$ such that the unique geodesic joining $\widetilde\mu=(1-\lambda)\delta_0+\lambda\delta_x$ and $\delta_y$ passes through $\mu$. Since both $\widetilde\mu$ and $\delta_y$ are fixed, we conclude that $\mu$ is also fixed.
\end{proof}

With these results in hand, we are ready to prove theorem~\ref{t:rigidity of rays}. We consider two cases: $p>1$ and $p=1$.

\begin{proof}[Proof of theorem~\ref{t:rigidity of rays}, case $p>1$]
For $p=2$, understanding how the isometries act on $\Delta_2$ is sufficient, since $\overline{\Conv(\Delta_2)}=\PP_2([0,\infty))$. Let $\mu(x,\sigma,p)$ be the measure in $\Delta_2$ given by
\[
\mu(x,\sigma,p) = \frac{e^{-p}}{e^{-p}+e^{p}}\delta_{x-\sigma e^{p}}+\frac{e^{p}}{e^{-p}+e^{p}}\delta_{x+\sigma e^{-p}}.
\]
A direct computation shows that 
\[
\W_2^2(\mu(x,\sigma,p),\mu(y,\rho,q))=|x-y|^2+\sigma^2+\rho^2-2\sigma\rho e^{|p-q|}.
\]
Then, in particular, any isometry of $\Delta_2$ that fixes $\Delta_1$ is given by
\[
\mu(x,\sigma,p) \mapsto \mu(x,\sigma,\varphi(p))
\] for some $\varphi\in \Isom([0,\infty))$. However, since $\Isom([0,\infty))=\{\id\}$, the only isometry of $\Delta_2$ that fixes $\Delta_1$ is the identity. This implies  
\[\Isom(\PP_2([0,\infty)))=\{\id\}=\#\Isom([0,\infty)).\]

For $p>1$, $p\neq 2$, we can follow \cite{GTV20}. For completeness, we outline the argument here.

The Banach space $L^p(0,1)$ is strictly convex for $p>1$, and the map $\mu\mapsto G_\mu^{-1}$ is an isometric embedding $\PP_p([0,\infty))\to L^p(0,1)$. The image of this embedding is a convex subset containing the $0$ function and its linear span is dense in $L^p(0,1)$. The density follows from the fact that  the image of $\PP_p([0,\infty))$ contains all functions of the form $t\mapsto t^n$, $n\in\NN$. By applying Mankiewicz's theorem (see \cite[Theorem 3.16]{GTV20}), we conclude that any isometry of $\PP_p([0,\infty))$ extends to a distance-preserving map $L^p(0,1)\to L^p(0,1)$ that fixes all constant functions. Since we have already shown that measures of the form $(1-\lambda)\delta_0+\lambda\delta_x$ are fixed, this implies that the indicator functions in $L^p(0,1)$ are also fixed by the (extensions of) isometries of $\PP_p([0,\infty))$. Therefore, the only isometry of $\PP_p([0,\infty))$ is the identity, i.e.\ $\Isom(\PP_p([0,\infty)))= \{\id\}=\#\Isom([0,\infty))$.
\end{proof}

Now we consider the case $p=1$. We adapt the argument given in \cite{GTV20} for the isometric rigidity of $\RR$ with respect to $\PP_1$, and indicate the necessary modifications below.

\begin{proof}[Proof of theorem~\ref{t:rigidity of rays}, case $p=1$]
We first observe that the measure $\delta_0\in \PP_1([0,\infty))$ is fixed. This follows from the fact that $\delta_0$ is the only measure in $\PP_p([0,\infty))$ that is not an intermediate point between any two different measures in $\PP_1([0,\infty))$. Indeed, for any $\mu,\nu\in \PP_1([0,\infty))$, we have
\begin{align*}
    \W_1(\mu,\nu) &= \int_0^1 |G_\mu^{-1}(m)-G_\nu^{-1}(m)|\ dm \\
    &\leq \int_0^1 G_\mu^{-1}(m)\ dm  + \int_0^1 G_\nu^{-1}(m)\ dm\\
    &= \W_1(\mu,\delta_0) + \W_1(\nu,\delta_0)
\end{align*}
and $\W_1(\mu,\nu) = \W_1(\mu,\delta_0) + \W_1(\nu,\delta_0)$ holds if and only if 
\[\int_0^1 \min(G_\mu^{-1}(m),G_\nu^{-1}(m))\ dm = 0.\]
Since both $G_\mu^{-1}$ and $G_\nu^{-1}$ are non-negative, non-decreasing functions, this condition implies that at least one of them is the zero function, meaning $\mu=\delta_0$ or $\nu=\delta_0$.

On the other hand, any measure $\mu\in\PP_1([0,\infty))\setminus\{\delta_0\}$ is a midpoint between $\delta_0$ and the measure $\widetilde\mu$ given by $G^{-1}_{\widetilde\mu} = 2G^{-1}_\mu$. Thus, we have established a geometric characterisation of $\delta_0$, which proves the claim.

We now prove that $\Isom(\PP_1([0,\infty)))$ fixes the set $\Sigma$ defined in \eqref{eq:sigma}. To do so, we consider sets of $t$-intermediate points between measures (see subsection \ref{ss:geodesics and curvature bounds}). According to \cite[Claim 3.5]{GTV20}, we have $\diam(M^{1/2}(\mu,\nu)) = \frac{1}{2}\W_1(\mu,\nu)$ if and only if $\mu$ and $\nu$ are \emph{adjacent}, i.e.\ there exist $a,b\in \RR$, $a<b$, such that $G_\mu|_{\RR\setminus [a,b)} = G_\nu|_{\RR\setminus [a,b)}$ and both $G_\mu|_{[a,b)}$ and $G_\mu|_{[a,b)}$ are constant. Equivalently, $\mu$ and $\nu$ differ at most by the masses they assign to the same atoms $a,b\in \RR$.

\begin{claim}
\label{claim:characterisation.sigma}
Let $\eta\in \PP_1([0,\infty))$. Then $\eta \in \Sigma\setminus\{\delta_0\}$ if and only if, for all $n\in \NN$, there exist measures $\mu_n\in \PP_1([0,\infty))$ satisfying the following conditions:
\begin{enumerate}
\item $\W_1(\delta_0,\mu_n)\to\infty$ as $n\to \infty$.
\item $\delta_0$ and $\mu_n$ are adjacent.
\item $\eta\in M^{1/n}(\delta_0,\mu_n)$.
\item There exists $\eta'\in M^{1/n}(\delta_0,\mu_n)$ such that $\W_1(\eta,\eta')=\diam(M^{1/n}(\delta_0,\mu_n))$.
\end{enumerate}
\end{claim}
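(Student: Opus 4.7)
The plan is to mirror the proof of \cite[Claim 3.6]{GTV20}. For the forward direction, given $\eta = (1-\alpha)\delta_0 + \alpha \delta_x \in \Sigma\setminus\{\delta_0\}$ with $\alpha \in (0,1]$ and $x > 0$, I would set
\[
\mu_n := (1-\alpha)\delta_0 + \alpha\delta_{nx}.
\]
Conditions (1)--(3) reduce to inverse-CDF calculations: $\W_1(\delta_0,\mu_n) = \alpha nx \to \infty$; $\mu_n$ is adjacent to $\delta_0$, since their CDFs agree outside $[0, nx)$ and both are constant on that interval; and $\W_1(\delta_0, \eta) = \alpha x = \tfrac{1}{n}\W_1(\delta_0,\mu_n)$, together with $\W_1(\eta, \mu_n) = \alpha(n-1)x$, confirms $\eta \in M^{1/n}(\delta_0, \mu_n)$.

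For condition (4), the key step is a characterisation of $M^{1/n}(\delta_0, \mu_n)$ via inverse CDFs: $\rho \in M^{1/n}(\delta_0, \mu_n)$ if and only if $\supp\rho \subset [0, nx]$, $\rho(\{0\}) \geq 1-\alpha$ and $\int y\, d\rho = \alpha x$. A natural candidate witness is
\[
\eta' := \left(1 - \tfrac{\alpha}{n}\right)\delta_0 + \tfrac{\alpha}{n}\delta_{nx},
\]
which lies in $M^{1/n}(\delta_0, \mu_n)$ and satisfies $\W_1(\eta, \eta') = 2\alpha(n-1)x/n$ by a direct CDF computation. The central technical claim is that this value is the diameter of $M^{1/n}(\delta_0, \mu_n)$, and moreover that $\{\eta, \eta'\}$ is the unique unordered pair realising it. This is proved by writing $\W_1(\rho_1, \rho_2) = \int_0^{nx} |H_{\rho_1} - H_{\rho_2}|\, dy$ with $H_\rho := 1 - G_\rho$, and maximising over the class of non-increasing functions $H\colon [0, nx] \to [0, \alpha]$ with $\int H = \alpha x$; the maximum should be attained only by the step functions $\alpha\chi_{[0,x)}$ and $(\alpha/n)\chi_{[0,nx)}$, which correspond respectively to $\eta$ and $\eta'$.

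For the backward direction, adjacency (condition (2)) forces $\mu_n = (1-\lambda_n)\delta_0 + \lambda_n\delta_{b_n}$ with $\lambda_n \in (0,1]$ and $b_n > 0$. Conditions (1) and (3), combined with the inverse-CDF characterisation of $M^{1/n}$, yield $\lambda_n b_n = n\W_1(\delta_0,\eta) =: nc$ with $c > 0$; in particular $\eta \neq \delta_0$. Applying the uniqueness established in the forward direction, condition (4) forces $\eta \in \{\eta_A^{(n)}, \eta_B^{(n)}\}$, where
\[
\eta_A^{(n)} := (1-\lambda_n)\delta_0 + \lambda_n\delta_{b_n/n}, \qquad \eta_B^{(n)} := \left(1-\tfrac{\lambda_n}{n}\right)\delta_0 + \tfrac{\lambda_n}{n}\delta_{b_n}.
\]
The option $\eta = \eta_B^{(n)}$ would force $\eta(\{0\}) = 1 - \lambda_n/n \geq 1 - 1/n$, which is incompatible with $\eta \neq \delta_0$ once $n > 1/(1 - \eta(\{0\}))$. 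Hence for all sufficiently large $n$ one has $\eta = \eta_A^{(n)}$, and the fact that $\eta$ is independent of $n$ forces $\lambda_n = 1 - \eta(\{0\}) =: \alpha$ and $b_n/n$ to be constants (say $b_n = nx$), giving $\eta = (1-\alpha)\delta_0 + \alpha\delta_x \in \Sigma\setminus\{\delta_0\}$. The principal obstacle is the diameter computation and the uniqueness of the extremal pair, which I expect to settle by a rearrangement argument analogous to that in \cite[Claim 3.5]{GTV20}.
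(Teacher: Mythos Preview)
Your proposal is correct and follows essentially the same route as the paper's proof: the same choice $\mu_n=(1-\alpha)\delta_0+\alpha\delta_{nx}$, the same witness $\eta'=(1-\alpha/n)\delta_0+(\alpha/n)\delta_{nx}$, and the same key technical ingredient (the diameter of $M^{1/n}(\delta_0,\mu_n)$ is realised uniquely by the pair $\{\eta,\eta'\}$), which the paper also attributes to an argument in the style of \cite[Claim~2.3]{GTV20}. Your explicit elimination of the $\eta_B^{(n)}$ option via $\eta(\{0\})\geq 1-1/n$ is exactly the mechanism behind the paper's terser assertion that $\eta=\eta_A^{(n)}$ for infinitely many $n$.
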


To prove the claim, first observe that if $\eta\in \Sigma\setminus\{\delta_0\}$ and $\eta = (1-\lambda)\delta_0+\lambda\delta_x$, a straightforward computation shows that the sequence given by 
\[\mu_n = (1-\lambda)\delta_0+\lambda\delta_{xn}\] satisfies conditions (1)--(3). Furthermore, an argument similar to that in the proof of \cite[Claim 2.3]{GTV20} shows that $\W_1(\eta,\eta')=\diam(M^{1/n}(\delta_0,\mu))$ for \[\eta' = \left(1-\frac{\lambda}{n}\right)\delta_0+\frac{\lambda}{n}\delta_{xn}.\]
Conversely, assume conditions (1)--(4) hold and let $\{\mu_n\}_{n\in\NN}$ be a sequence satisfying these conditions. Then there is a sequence of positive numbers $\{x_n\}_{n\in \NN}$ such that $G_{\mu_n}|_{\RR\setminus [0,x_n)} = G_{\delta_0}|_{\RR\setminus [0,x_n)}\equiv 1$ and such that $G_{\mu_n}|_{[0,x_n)}$ is constant for each $n\in\NN$. In other words, 
\[
\mu_n = \left(1-\lambda_n\right)\delta_0+\lambda_n\delta_{x_n}
\]
for some $\lambda_n\in[0,1]$ such that $\lambda_nx_n\to \infty$. It is then easy to see that if $\eta\in M^{1/n}(\delta_0,\mu_n)$ then $\supp(\eta)\subset [0,x_n)$. By a similar argument similar to that in the proof of \cite[Claim 2.3]{GTV20}, we see that if $\eta,\eta'\in M^{1/n}(\delta_0,\mu_n)$ then 
\[
\W_1(\eta,\eta')=\diam(M^{1/n}(\delta_0,\mu_n))
\]
if and only if 
\[\{\eta,\eta'\} = \left\{\left(1-\frac{\lambda_n}{n}\right)\delta_0+\frac{\lambda_n}{n}\delta_{x_n},\left(1-\lambda_n\right)\delta_0+\lambda_n\delta_{\frac{x_n}{n}}\right\}.
\]
Since the preceding equation holds for each $n\in \NN$ and $\eta$ is fixed, it follows that $\eta=\left(1-\lambda_n\right)\delta_0+\lambda_n\delta_{\frac{x_n}{n}}$ for infinitely many values of $n\in\NN$. This implies that $\lambda_n=\lambda$ and $x_n=nx$ for some constants $\lambda\in (0,1]$, $x\in (0,\infty)$, and infinitely many values of $n\in\NN$. In other words, $\eta\in \Sigma$, proving the claim.

Claim~\ref{claim:characterisation.sigma} implies that $\Sigma$ is invariant under the action of $\Isom(\PP_1([0,\infty)))$, since isometries preserve the adjacency relation (see \cite[Theorem 3.7]{GTV20}) and send $t$-intermediate points to $t$-intermediate points. Moreover, since $\{\delta_0\}$ and $\Sigma$ are invariant, the sets 
\[\Sigma_t=\{\mu\in \Sigma: \W_1(\delta_0,\mu)=t\}\]
are also invariant. An argument analogous to that in the proof of  lemma~\ref{l:invariance of dirac measures} shows that $\Delta_1$ is pointwise fixed by $\Isom(\PP_1([0,\infty)))$. Finally, by a similar argument to that in the proof of \cite[Claim 2.4]{GTV20}, any isometry that fixes all Dirac measures in $\PP_1([0,\infty))$ must be the identity. Thus, we conclude that $\Isom(\PP_1([0,\infty))) = \{\id\} = \#\Isom([0,\infty))$.
\end{proof}

\section{Proof of theorem~\ref{t:rigidity of cylinders}: Isometric rigidity of half-cylinders}\label{s:rigidity of cylinders}

Throughout this section, we assume that $X$ is a non-branching, compact space, and we fix $p,q\in (1,\infty)$. Let us first introduce the spaces we will be working with.

\begin{definition}
Let $(X,d_X), (Y,d_Y)$ be metric spaces. The \emph{$q$-product} of $X$ and $Y$ is the Cartesian product $X\times Y$, equipped with the $q$-product metric given by
\[
d_q((x_1,y_1),(x_2,y_2))= \left(d_X(x_1,x_2)^q+d_Y(y_1,y_2)^q  \right)^{1/q}. 
\]
We denote this space by $X\times_q Y$.
\end{definition}

We now observe how $q$-products interact with the non-branching condition.
\begin{lemma}
\label{l:midpoints}
    Let $X$ and $Y$ be metric spaces. Then $(x,y)\in X\times_q Y$ is a midpoint between $(x_1,y_1),(x_2,y_2)\in X\times_q Y$ if and only if $x\in\Mid(x_1, x_2)$ and $y\in\Mid(y_1,y_2)$. 
\end{lemma}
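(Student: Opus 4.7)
The plan is to verify both directions of the equivalence directly; the lemma is essentially a statement about the equality case in Minkowski's inequality.

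The forward direction is a one-line computation. Assuming $x \in \Mid(x_1, x_2)$ and $y \in \Mid(y_1, y_2)$, and writing $A = d_X(x_1,x_2)$, $B = d_Y(y_1,y_2)$, the definition of $d_q$ immediately gives
\[
d_q((x,y),(x_i,y_i))^q = d_X(x,x_i)^q + d_Y(y,y_i)^q = \frac{1}{2^q}\bigl(A^q + B^q\bigr) = \frac{1}{2^q}\,d_q((x_1,y_1),(x_2,y_2))^q
\]
for $i = 1, 2$, which is precisely the midpoint condition.

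For the converse, suppose $(x,y)$ is a midpoint, and set $a_i = d_X(x,x_i)$, $b_i = d_Y(y,y_i)$. The midpoint assumption reads $(a_i^q + b_i^q)^{1/q} = \tfrac{1}{2}(A^q + B^q)^{1/q}$ for $i = 1, 2$, while the triangle inequalities in the factors give $a_1 + a_2 \geq A$ and $b_1 + b_2 \geq B$. Combining these bounds with Minkowski's inequality applied to $(a_1,b_1),(a_2,b_2) \in \mathbb{R}^2$ with the $\ell^q$-norm yields the chain
\[
(A^q + B^q)^{1/q} \leq \bigl((a_1+a_2)^q + (b_1+b_2)^q\bigr)^{1/q} \leq (a_1^q + b_1^q)^{1/q} + (a_2^q + b_2^q)^{1/q} = (A^q + B^q)^{1/q}.
\]
Hence equality holds throughout: the outer equality forces $a_1 + a_2 = A$ and $b_1 + b_2 = B$, and the equality case in Minkowski's inequality forces $(a_1, b_1)$ and $(a_2, b_2)$ to be non-negatively proportional. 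Since these two vectors have the same $\ell^q$-norm, they must coincide, giving $a_1 = a_2 = A/2$ and $b_1 = b_2 = B/2$. This is exactly the claim that $x \in \Mid(x_1, x_2)$ and $y \in \Mid(y_1, y_2)$.

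The only subtle point---and the main obstacle to beware of---is the appeal to the equality case in Minkowski's inequality, which requires strict convexity of the $\ell^q$-norm on $\mathbb{R}^2$, and therefore $q > 1$. This assumption is in force throughout Section~\ref{s:rigidity of cylinders}, so no issue arises; however, for $q=1$ the statement would genuinely fail, as additional midpoints could appear. No other hypotheses (completeness, compactness, or non-branching of $X$) enter this particular lemma.
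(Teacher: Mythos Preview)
Your proof is correct. Both directions are handled cleanly, and you correctly identify that the equality case requires $q>1$, which is the standing assumption in this section.

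The approach differs from the paper's, though both are elementary. The paper argues via the power-mean inequality: from the triangle inequality in $X$ and the convexity of $t\mapsto t^q$ one gets
\[
d_X(x_1,x_2)^q \leq \bigl(d_X(x_1,x)+d_X(x,x_2)\bigr)^q \leq 2^{q-1}\bigl(d_X(x_1,x)^q+d_X(x,x_2)^q\bigr),
\]
and similarly for $Y$; summing and invoking the midpoint condition forces equality throughout, and the equality case of the power-mean inequality (for $q>1$) gives $d_X(x_1,x)=d_X(x,x_2)$. You instead package the two coordinates into vectors in $(\mathbb{R}^2,\ell^q)$ and appeal to Minkowski's inequality and its equality case. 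Your route is arguably the more conceptual one, since $d_q$ is \emph{defined} via the $\ell^q$-norm and Minkowski is precisely the triangle inequality there; the paper's route is slightly more hands-on but avoids naming Minkowski explicitly. Either way, the strict convexity of $\ell^q$ for $q>1$ is the crux, and both proofs isolate it correctly.
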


\begin{proof}
    The ``if'' direction is straightforward. For the ``only if'' part, we note that 
\begin{equation}\label{e:p-mean}
d_X(x_1,x_2)^q\leq (d_X(x_1,x)+d_X(x,x_2))^q\leq 2^{q-1}(d_X(x_1,x)^q+d_X(x,x_2)^q)
\end{equation}
by the triangle inequality in $X$ and the inequality between the arithmetic mean and the $q$-mean of $d_X(x_1,x)$ and $d_X(x,x_2)$. Similarly, we get an analogous inequality involving $y_1,y_2,y$. Adding these inequalities, we get
\begin{equation}\label{e:p-product is non-branching}
d_X(x_1,x_2)^q+d_Y(y_1,y_2)^q\leq 2^{q-1}(d_X(x_1,x)^q+d_X(x,x_2)^q+d_Y(y_1,y)^q+d_Y(y,y_2)^q).
\end{equation}
For $(x,y)\in\Mid((x_1,y_1), (x_2,y_2))$, we have
\begin{align*}
d_X(x_1,x_2)^q+d_Y(y_1,y_2)^q &= 2^{q}(d_X(x_1,x)^q+d_Y(y_1,y)^q) \\
&= 2^{q}(d_X(x,x_2)^q+d_Y(y,y_2)^q) \\
&= 2^{q-1}(d_X(x_1,x)^q+d_Y(y_1,y)^q+d_X(x,x_2)^q+d_Y(y,y_2)^q).
\end{align*}
This implies that the inequalities in \eqref{e:p-mean} and \eqref{e:p-product is non-branching} are equalities. The equality between the arithmetic mean and the $q$-mean implies that $d_X(x_1,x)=d_X(x,x_2)=\frac{1}{2}d_X(x_1,x_2)$. Hence, $x\in\Mid(x_1,x_2)$. Analogously, $y\in\Mid(y_1,y_2)$.
\end{proof}

\begin{lemma}
Let $X$ and $Y$ be metric spaces. Then $X$ and $Y$ are non-branching if and only if $X\times_q Y$ is non-branching for any $q\in (1,\infty)$.
\end{lemma}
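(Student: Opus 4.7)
The plan is to transfer non-branching across the $q$-product via a coordinatewise description of geodesics. First I would establish the following structural fact: every $\gamma\in\Geo(X\times_q Y)$ decomposes as $\gamma(s)=(\alpha(s),\beta(s))$ where $\alpha\in\Geo(X)$ and $\beta\in\Geo(Y)$ (allowing constant maps as degenerate elements of $\Geo$). This follows by repeatedly applying Lemma \ref{l:midpoints}: for any $s_1<s_2$ in $[0,1]$, the restriction $\gamma|_{[s_1,s_2]}$ is itself a geodesic and its midpoint $\gamma((s_1+s_2)/2)$ must decompose into coordinatewise midpoints of $(\alpha(s_1),\alpha(s_2))$ and $(\beta(s_1),\beta(s_2))$. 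Iterating this over dyadic rationals in $[0,1]$ and invoking continuity of the projections $\alpha,\beta$ (which are $1$-Lipschitz images of $\gamma$), one obtains $d_X(\alpha(0),\alpha(s))=s\,d_X(\alpha(0),\alpha(1))$ and $d_X(\alpha(0),\alpha(s))+d_X(\alpha(s),\alpha(1))=d_X(\alpha(0),\alpha(1))$ for every $s\in[0,1]$, so $\alpha$ is a constant-speed geodesic, and likewise $\beta$.

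For the implication that $X,Y$ non-branching implies $X\times_q Y$ non-branching, suppose $\gamma_1,\gamma_2\in\Geo(X\times_q Y)$ agree on $[0,t]$ for some $t\in(0,1]$. Writing $\gamma_i=(\alpha_i,\beta_i)$ by the structural fact, the agreement descends to $\alpha_1|_{[0,t]}=\alpha_2|_{[0,t]}$ in $\Geo(X)$ and $\beta_1|_{[0,t]}=\beta_2|_{[0,t]}$ in $\Geo(Y)$. Non-branching of each factor then forces $\alpha_1=\alpha_2$ and $\beta_1=\beta_2$, hence $\gamma_1=\gamma_2$. One minor edge case to handle: if a coordinate, say $\alpha_1$, is constant, then $\alpha_2$ is constant on $[0,t]$, and its constant-speed parametrisation forces $\alpha_2$ to be constant on all of $[0,1]$, so equality still holds.

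For the converse, assume $X\times_q Y$ is non-branching. Given $\alpha_1,\alpha_2\in\Geo(X)$ with $\alpha_1|_{[0,t]}=\alpha_2|_{[0,t]}$, fix any $y_0\in Y$ and set $\tilde\gamma_i(s)=(\alpha_i(s),y_0)$. Since $d_{X\times_q Y}(\tilde\gamma_i(s_1),\tilde\gamma_i(s_2))=d_X(\alpha_i(s_1),\alpha_i(s_2))$, each $\tilde\gamma_i$ is a constant-speed geodesic in the product; they agree on $[0,t]$, so non-branching of $X\times_q Y$ gives $\tilde\gamma_1=\tilde\gamma_2$ and hence $\alpha_1=\alpha_2$. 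An analogous argument with a fixed $x_0\in X$ yields non-branching of $Y$.

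The only non-trivial step is the structural decomposition of geodesics in the first paragraph; this is where the inequality chain underlying Lemma \ref{l:midpoints} is invoked at every dyadic level, and care is needed to upgrade from the midpoint statement to a genuine constant-speed geodesic via continuity. Aside from this bookkeeping, the equivalence is immediate from the definitions.
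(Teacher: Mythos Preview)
Your proof is correct and rests on the same key ingredient as the paper, namely Lemma~\ref{l:midpoints}, but you organise the argument differently. The paper argues directly at the level of single midpoints: it assumes $(x,y)\in\Mid((x_1,y_1),(x_2,y_2))\cap\Mid((x_1,y_1),(x_3,y_3))$, applies Lemma~\ref{l:midpoints} to split this into coordinatewise midpoint conditions, and then invokes non-branching of $X$ and $Y$ to conclude $(x_2,y_2)=(x_3,y_3)$. You instead first upgrade Lemma~\ref{l:midpoints} to a full structural decomposition of geodesics in $X\times_q Y$ via dyadic iteration and continuity, and then apply the geodesic formulation of non-branching in each factor. For the converse, both arguments are identical: the isometric embeddings $x\mapsto(x,y_0)$ and $y\mapsto(x_0,y)$ transfer branching in a factor to branching in the product. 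Your route is a little longer but has the advantage of matching exactly the geodesic definition of non-branching stated in Section~\ref{ss:geodesics and curvature bounds}, whereas the paper's terser midpoint argument implicitly identifies non-branching with a midpoint uniqueness condition.
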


\begin{proof}
Let $X$ and $Y$ be non-branching and let $(x_i,y_i),(x,y)\in X\times_q Y$, $i=1,2,3$, be such that $(x,y)\in\Mid((x_1,y_1),(x_2,y_2))\cap\Mid((x_1,y_1),(x_3,y_3))$. By lemma~\ref{l:midpoints},  $x\in\Mid(x_1,x_2)\cap\Mid(x_1,x_3)$, and $y\in\Mid(y_1,y_2)\cap\Mid(y_1,y_3)$. Since $X$ and $Y$ are non-branching,  it follows that $x_2=x_3$ and $y_2=y_3$, so $(x_2,y_2)=(x_3,y_3)$. Thus, $X\times_q Y$ is non-branching. The converse implication follows from the fact that both $X$ and $Y$ are isometrically embedded into $X\times_q Y$.
\end{proof}

\subsection{Invariance of measures on the base of the half-cylinder}
Let us characterise the measures supported on $X\times \lbrace 0\rbrace$, the base of the half-cylinder. In the following, we will need the auxiliary maps 
$T_t\colon X\times_q [0,\infty) \rightarrow X\times_q [0,\infty)$
defined by 
\begin{equation*}
\label{eq:function.T}
    T_t((x,s))=(x,t)
\end{equation*}
for $t\geq 0$.

\begin{lemma}
\label{lem:characterisation.of.measures.on.the.cylinder.base}
A measure $\mu\in\PP_p(X\times_q [0,\infty))$ is supported on $X\times \lbrace 0\rbrace$ if and only if $\mu$ is not in the interior of any maximal ray in $\PP_p(X\times_q [0,\infty))$.    
\end{lemma}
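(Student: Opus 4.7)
The plan is to identify Wasserstein rays in $\PP_p(X\times_q[0,\infty))$ with dynamical plans supported on certain infinite curves in the base space, and to exploit the compactness of $X$ to severely restrict those curves.

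I would begin by classifying the constant-speed curves $\gamma:[0,\infty)\to X\times_q[0,\infty)$ whose restriction to every $[0,T]$ is a geodesic. By lemma~\ref{l:midpoints} such a curve splits as $\gamma(s)=(\alpha(s),L(s))$ with $\alpha$ and $L$ constant-speed geodesics on $[0,\infty)$ in $X$ and $[0,\infty)$ respectively. Since $\diam(X)<\infty$ bounds the distance between endpoints of any shortest path in $X$, the speed of $\alpha$ must be zero, so $\alpha$ is constant; monotonicity of geodesics in $[0,\infty)$ then forces $\gamma$ to be either a constant $(x,a)$ or a vertical ray $s\mapsto(x,a+vs)$ for some $x\in X$, $a\geq 0$, $v>0$. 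A Lisini-type lifting, combined with non-branching of $X\times_q[0,\infty)$, then produces for any Wasserstein ray $(\mu_s)_{s\geq 0}$ a probability measure $\pi$ supported on these infinite curves such that $\mu_s=(e_s)_\#\pi$.

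For the forward direction, suppose $\mu$ is supported on $X\times\{0\}$ and $\mu=\mu_{s_0}$ for some $s_0>0$ in a maximal ray $(\mu_s)_{s\geq 0}$. Then $\pi$-a.e. curve $\gamma$ satisfies $\gamma(s_0)\in X\times\{0\}$; but for a vertical ray this would require $a+vs_0=0$ with $a\geq 0$, $v>0$, $s_0>0$, which is impossible. Hence $\pi$ is concentrated on constants, so $\mu_s\equiv\mu$, contradicting the nondegeneracy of a ray.

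For the reverse direction, assume $\mu$ is not supported on $X\times\{0\}$ and decompose $\mu=\mu_b+\mu_r$ with $\mu_r=\mu|_{X\times(0,\infty)}\neq 0$. Setting $\gamma_{x,t}(s)=(x,0)$ for $t=0$ and $\gamma_{x,t}(s)=(x,ts)$ for $t>0$, define $\pi=\int\delta_{\gamma_{x,t}}\,d\mu(x,t)$ and $\mu_s=(e_s)_\#\pi$. Then $\mu_1=\mu$; the plan $(e_0,e_T)_\#\pi$ is cyclically monotone via the elementary estimate $(d_X(x,x')^q+(t'T)^q)^{p/q}\geq (t'T)^p$, so each restriction $(\mu_s)_{s\in[0,T]}$ is a Wasserstein geodesic; and a direct computation yields $\W_p(\mu_s,\mu_{s'})=|s-s'|\bigl(\int t^p\,d\mu_r\bigr)^{1/p}$, a positive ray speed since $\mu_r\neq 0$. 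Maximality holds because the initial point $\mu_0=(T_0)_\#\mu$ is supported on $X\times\{0\}$, so by the forward direction no backward extension exists, and $\mu=\mu_1$ lies in the interior of this maximal ray. The main technical obstacle will be justifying the global Lisini-type lifting to a single dynamical plan on curves parametrized on all of $[0,\infty)$, which relies on non-branching of $X\times_q[0,\infty)$ to ensure compatible extension of the finite-interval dynamical plans through a projective-limit construction.
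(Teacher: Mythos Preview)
Your approach is essentially the same as the paper's: both directions use the vertical dilation $(x,t)\mapsto(x,ts)$ to build the ray through $\mu$ when $\mu$ is not supported on the base, and both lift a Wasserstein ray to a dynamical plan on curves and use compactness of $X$ to force those curves to be vertical, which is incompatible with touching $X\times\{0\}$ at an interior time. Your treatment is in fact more careful than the paper's (you verify cyclical monotonicity explicitly and address maximality of the constructed ray via the forward direction, both of which the paper asserts without comment).

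The obstacle you flag---the global lifting to a single dynamical plan on $[0,\infty)$---is real (the paper cites only the finite-interval version from Ambrosio--Gigli and applies it to a ray), but it is avoidable. For the forward direction you only need the lift on a single finite interval $[0,T]$ with $T>s_0$: the $[0,\infty)$-component of any curve in the support of the lifted plan is an affine map $[0,T]\to[0,\infty)$ vanishing at the interior time $s_0$, hence identically zero, so each curve stays in $X\times\{0\}$ and has length at most $\diam(X)$. Integrating gives $\W_p(\mu_0,\mu_T)\leq\diam(X)$, which contradicts $\W_p(\mu_0,\mu_T)=cT\to\infty$ for the ray speed $c>0$. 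This sidesteps the projective-limit construction entirely and is arguably cleaner than either your proposal or the paper's argument.
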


\begin{proof}
Let $\mu\in \PP_p(X\times_q [0,\infty))\setminus \PP_p(X\times \{0\})$. The projection onto the base of the cylinder, $T_0\colon X\times_q [0,\infty)\to X\times \{0\}$, induces an optimal plan between $\mu$ and $(T_0)_\#(\mu)$. Moreover, if $f_t\colon X\times_q [0,\infty)\to X\times_q [0,\infty)$ is given by $f_t(x,s)=(x,ts)$, then we obtain a geodesic between $(T_0)_\#(\mu)$ and $\mu$ by letting 
$\mu_t=(f_t)_\#(\mu)$. This geodesic extends to a maximal ray passing through $\mu$.

Conversely, if $\mu\in \PP_p(X\times \{0\})$ and $(\mu_t)_{t\in[0,\infty)}$ is a maximal ray in $\PP_p(X\times_q[0,\infty))$ that passes through $\mu$, then, by \cite[Theorem 2.10]{ambrosio-gigli}, there is a probability measure $\widetilde\mu\in \PP(\Geo(X))$ such that $\mu_t = e_{t\#}\widetilde\mu$. In particular, for any $\gamma\in \supp(\widetilde\mu)$, we have $\gamma_t \in \supp(\mu_t)$ for all $t\in[0,\infty)$. This implies that $\gamma$ is a constant-speed ray passing through $X\times \{0\}$ at $t=1$. However, such ray cannot exist if $X$ is compact.
\end{proof}

\begin{remark}
Slightly abusing notation, for the remainder of this section, we will denote by $\PP_p(X\times \{t\})$ the set of measures in $\PP_p(X\times_q [0,\infty))$ that are supported on $X\times \{t\}$ for $t\in [0,\infty)$.     
\end{remark}

Lemma~\ref{lem:characterisation.of.measures.on.the.cylinder.base} provides a description of $\PP_p(X\times \{0\})$, based solely on metric properties of $\PP_p(X\times_q [0,\infty))$. Therefore, this set is invariant under the action of the isometry group, leading to the following result.

\begin{proposition}\label{lem:cap of cylinder is invariant}
 Let $\Phi \in \Isom (\PP_p(X\times_q [0,\infty)))$. Then, for all $\mu \in \PP_p(X\times \lbrace 0 \rbrace)$, we have $\Phi (\mu) \in \PP_p (X\times \lbrace 0\rbrace).$   
\end{proposition}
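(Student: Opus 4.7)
The plan is to invoke lemma~\ref{lem:characterisation.of.measures.on.the.cylinder.base} directly. That lemma provides a purely metric characterisation of $\PP_p(X\times\{0\})$: namely, $\mu$ belongs to this set if and only if $\mu$ is not an interior point of any maximal geodesic ray in $\PP_p(X\times_q[0,\infty))$. Since isometries preserve all properties formulated in terms of the metric, this characterisation will transfer across $\Phi$ essentially automatically, so invariance of $\PP_p(X\times\{0\})$ will follow.

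More explicitly, I would proceed by contradiction. Suppose $\Phi\in\Isom(\PP_p(X\times_q[0,\infty)))$ and $\mu\in\PP_p(X\times\{0\})$, but $\Phi(\mu)\notin\PP_p(X\times\{0\})$. By lemma~\ref{lem:characterisation.of.measures.on.the.cylinder.base}, there exists a maximal geodesic ray $(\nu_t)_{t\in[0,\infty)}$ in $\PP_p(X\times_q[0,\infty))$ with $\Phi(\mu)$ in its interior, i.e.\ $\Phi(\mu)=\nu_s$ for some $s>0$. The pulled-back curve $(\Phi^{-1}(\nu_t))_{t\in[0,\infty)}$ is again an isometrically embedded half-line in $\PP_p(X\times_q[0,\infty))$ containing $\mu$ at the interior parameter $s>0$. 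Any strict extension of $(\Phi^{-1}(\nu_t))$ beyond its $t=0$ endpoint would, after pushing forward by $\Phi$, produce a strict extension of $(\nu_t)$, contradicting its maximality. Therefore $(\Phi^{-1}(\nu_t))$ is itself a maximal ray containing $\mu$ in its interior, which contradicts the characterisation of $\mu\in\PP_p(X\times\{0\})$ given by lemma~\ref{lem:characterisation.of.measures.on.the.cylinder.base}.

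The real work has already been carried out in the proof of lemma~\ref{lem:characterisation.of.measures.on.the.cylinder.base}, so I do not anticipate any genuine obstacle at this step. The only small point worth verifying is that isometries preserve both the geodesic-ray property and its maximality, and both reduce to the trivial observation that $\Phi$ and $\Phi^{-1}$ send isometric embeddings of $[0,\infty)$ to isometric embeddings of $[0,\infty)$. With this, the proposition is an immediate corollary of lemma~\ref{lem:characterisation.of.measures.on.the.cylinder.base}.
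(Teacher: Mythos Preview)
Your proposal is correct and follows exactly the paper's approach: the proposition is stated as an immediate consequence of the purely metric characterisation in lemma~\ref{lem:characterisation.of.measures.on.the.cylinder.base}, with the paper noting only that this characterisation is preserved by isometries. Your version simply spells out the contradiction argument in more detail than the paper does.
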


\subsection{Invariance of Dirac measures} 
We will now prove that the set of Dirac measures is invariant under isometries. This is achieved by proving a metric characterisation of Dirac measures. We need to consider the following sets of measures that project to a given measure on the base of the half-cylinder.

\begin{definition}
For any $\nu\in \PP_p(X\times\{0\})$, define 
\[
I(\nu)=\{\mu\in \PP_p(X\times_q [0,\infty)): (T_0)_\#(\mu)=\nu\}.
\]
We now prove the following metric characterisation of $I(\nu)$.
\end{definition}

\begin{lemma}
\label{claim:I(v)}
Let $\nu\in\PP_p(X\times\{0\})$. Then $\mu\in I(\nu)$ if and only if 
$\argmin(\W_p(\mu,\cdot)|_{\PP_p(X\times\{0\}})=\{\nu\}$.
\end{lemma}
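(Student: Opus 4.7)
The plan is to unify both directions by showing that, for every $\mu \in \PP_p(X\times_q[0,\infty))$, the unique minimiser of $\W_p(\mu,\cdot)$ restricted to $\PP_p(X\times\{0\})$ is precisely $(T_0)_\#\mu$. Once this is established, the equivalence in the lemma is immediate: $\mu \in I(\nu)$ means $(T_0)_\#\mu = \nu$, which coincides with $\nu$ being the unique element of $\argmin(\W_p(\mu,\cdot)|_{\PP_p(X\times\{0\})})$.

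First I would establish an upper bound by exhibiting the explicit transport plan $(\id, T_0)_\#\mu$ between $\mu$ and $(T_0)_\#\mu$, whose cost is $\int s^p\,d\mu(x,s)$, so that $\W_p(\mu,(T_0)_\#\mu)^p \leq \int s^p\,d\mu$. For a matching lower bound, for any $\nu' \in \PP_p(X\times\{0\})$ and any transport plan $\pi$ between $\mu$ and $\nu'$, the key tool is the pointwise inequality
\[
d_q((x,s),(x',0))^p = \bigl(d_X(x,x')^q + s^q\bigr)^{p/q} \geq s^p,
\]
with equality if and only if $d_X(x,x') = 0$ (using that $t \mapsto t^{p/q}$ is strictly increasing on $[0,\infty)$). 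Integrating this inequality against $\pi$ yields $\W_p(\mu,\nu')^p \geq \int s^p\,d\mu$, which shows that $(T_0)_\#\mu$ attains the minimum.

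The only subtle point is uniqueness, but it also follows from the equality case above. If $\nu' \in \PP_p(X\times\{0\})$ realises the minimum and $\pi \in \Opt(\mu,\nu')$ (which exists because $X\times_q[0,\infty)$ is Polish by compactness of $X$), then equality in the pointwise inequality must hold $\pi$-a.e., forcing $x = x'$ on $\supp(\pi)$. On this set one has $p^2 = T_0 \circ p^1$, and consequently $\nu' = p^2_\#\pi = T_{0\#}\,p^1_\#\pi = (T_0)_\#\mu$, as required. I do not foresee any real obstacle: the argument relies only on the explicit form of the $q$-product distance and the standard existence of optimal plans in Polish metric spaces, both of which are already available in the setting of the lemma.
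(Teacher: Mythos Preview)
Your proposal is correct and follows essentially the same approach as the paper: both establish that $(T_0)_\#\mu$ is the unique minimiser of $\W_p(\mu,\cdot)$ over $\PP_p(X\times\{0\})$ via the pointwise inequality $(d_X(x,x')^q+s^q)^{p/q}\ge s^p$ (with equality iff $x=x'$), using the explicit plan $(\id,T_0)_\#\mu$ for the upper bound and the equality case applied to an optimal $\pi$ for uniqueness. Your presentation is arguably tidier in that you state the unified goal up front, but the mathematical content is the same.
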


\begin{proof}
For any $\nu'\in \PP_p(X\times \{0\})$ and any $\pi\in\Opt(\mu,\nu')$, we have
\begin{align*}
\W_p^p(\mu,\nu') &> \int (d_X(x,y)^q+|s|^q)^{p/q}\ d\pi((x,s),(y,0))\\
&\geq \int |s|^p\ d\pi((x,s),(y,t))\\
&= \int |s|^p\ d\mu(x,s)\\
&\geq \W_p^p(\mu,(T_0)_\#(\mu)).
\end{align*}
Thus,
\begin{equation}\label{e:characterisation sets I-2}
\W_p(\mu,\nu') \geq   \W_p(\mu, (T_0)_\#(\mu))
\end{equation}
which means that $(T_0)_\#(\mu)\in \argmin(\W_p(\mu,\cdot)|_{\PP_p(X\times\{0\}})$ for any $\mu\in \PP_p(X\times_q[0,\infty))$. In particular, if $\argmin(\W_p(\mu,\cdot)|_{\PP_p(X\times\{0\}})=\{\nu\}$, then $\nu = (T_0)_\#(\mu)$. Hence, $\mu\in I(\nu)$.

Conversely, if $\mu\in I(\nu)$, then $\nu=(T_0)_\#(\mu)\in \argmin(\W_p(\mu,\cdot)|_{\PP_p(X\times\{0\}})$. For any $\nu'\in \PP_p(X\times \{0\})$, we have $\nu'\in\argmin(\W_p(\mu,\cdot)|_{\PP_p(X\times\{0\}})$ if and only if equality holds in \eqref{e:characterisation sets I-2}, which implies  $d_X(x,y)=0$ for $\pi$-a.e. $((x,s),(y,0))$ whenever $\pi\in \Opt(\mu,\nu')$. Therefore,
$\nu' = (T_0)_\#(\mu) = \nu$. 
\end{proof}

\begin{lemma}\label{lem:invariance of sets I}
Let $\nu\in\PP_p(X\times\{0\})$ and $\Phi\in\Isom(\PP_p(X\times_q [0,\infty)))$. Then $\Phi(I(\nu))=I(\Phi(\nu))$.
\end{lemma}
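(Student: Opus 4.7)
The plan is to combine the metric characterisation of $I(\nu)$ from lemma~\ref{claim:I(v)} with the fact that $\PP_p(X\times\{0\})$ is preserved by $\Phi$, as established in proposition~\ref{lem:cap of cylinder is invariant}.

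First I would note that, because $\Phi$ is an isometry of $\PP_p(X\times_q [0,\infty))$ and proposition~\ref{lem:cap of cylinder is invariant} shows that $\Phi$ maps $\PP_p(X\times\{0\})$ into itself, the same is true for $\Phi^{-1}$, so the restriction $\Phi|_{\PP_p(X\times\{0\})}$ is a bijection from $\PP_p(X\times\{0\})$ onto itself. In particular, $\Phi(\nu)\in\PP_p(X\times\{0\})$, so that $I(\Phi(\nu))$ is meaningful.

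Next, take $\mu\in I(\nu)$. By lemma~\ref{claim:I(v)}, $\argmin(\W_p(\mu,\cdot)|_{\PP_p(X\times\{0\})})=\{\nu\}$. For any $\nu'\in \PP_p(X\times\{0\})$, write $\nu'=\Phi(\nu'')$ with $\nu''\in \PP_p(X\times\{0\})$ (using that $\Phi$ restricts to a bijection of this set). Since $\Phi$ is an isometry,
\[
\W_p(\Phi(\mu),\nu') = \W_p(\Phi(\mu),\Phi(\nu''))=\W_p(\mu,\nu'')\geq \W_p(\mu,\nu)=\W_p(\Phi(\mu),\Phi(\nu)),
\]
with equality if and only if $\nu''=\nu$, that is, if and only if $\nu'=\Phi(\nu)$. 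Hence $\argmin(\W_p(\Phi(\mu),\cdot)|_{\PP_p(X\times\{0\})})=\{\Phi(\nu)\}$, and applying lemma~\ref{claim:I(v)} once more yields $\Phi(\mu)\in I(\Phi(\nu))$. This establishes the inclusion $\Phi(I(\nu))\subset I(\Phi(\nu))$.

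Finally, the reverse inclusion is obtained by running the same argument with $\Phi^{-1}$ in place of $\Phi$ and $\Phi(\nu)$ in place of $\nu$: this gives $\Phi^{-1}(I(\Phi(\nu)))\subset I(\nu)$, and hence $I(\Phi(\nu))\subset \Phi(I(\nu))$, completing the proof. No serious obstacle is expected here, since the argument is a direct transfer of the metric characterisation in lemma~\ref{claim:I(v)} via the isometry $\Phi$, once proposition~\ref{lem:cap of cylinder is invariant} guarantees that the set $\PP_p(X\times\{0\})$ over which the $\argmin$ is taken is itself invariant.
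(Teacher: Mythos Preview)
Your proof is correct and follows essentially the same approach as the paper's: both combine the metric characterisation of $I(\nu)$ from lemma~\ref{claim:I(v)} with the invariance of $\PP_p(X\times\{0\})$ from proposition~\ref{lem:cap of cylinder is invariant}, and then use that an isometry carries the $\argmin$ set to the corresponding $\argmin$ set. The paper's version is simply more compressed, stating directly that $\Phi(\argmin(\W_p(\mu,\cdot)|_{\PP_p(X\times\{0\})})) = \argmin(\W_p(\Phi(\mu),\cdot)|_{\PP_p(X\times\{0\})})$ and concluding the biconditional $\mu\in I(\nu)\iff \Phi(\mu)\in I(\Phi(\nu))$ in one stroke, whereas you spell out the two inclusions separately.
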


\begin{proof}
Let $\Phi\in \Isom(\PP_p(X\times_q [0,\infty))$. By proposition~\ref{lem:cap of cylinder is invariant}, $\nu \in \PP_p(X\times \{0\})$ if and only if $\Phi(\nu)\in \PP_p(X\times \{0\})$. Since $\Phi$ is an isometry,  
\[
\Phi(\argmin(\W_p(\mu,\cdot)|_{\PP_p(X\times\{0\})})) = \argmin(\W_p(\Phi(\mu),\cdot)|_{\PP_p(X\times\{0\})}).
\] 
By lemma~\ref{claim:I(v)}, $\mu\in I(\nu)$ if and only if $\Phi(\mu)\in I(\Phi(\nu))$, and the conclusion follows.
\end{proof}

 We can use the sets $I(\nu)$ to characterise measures in $\Delta_1(X\times\{0\})$ as follows.

\begin{lemma}\label{lem:characterisation of dirac measures}
Let $\mu\in\PP_p(X\times\{0\})$. Then the set $I(\mu)$ is geodesic (with respect to the restricted metric from $\PP_p(X\times[0,\infty))$) if and only if $\mu \in \Delta_1(X\times\lbrace 0 \rbrace).$
\end{lemma}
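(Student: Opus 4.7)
The plan is to prove the two directions of the biconditional separately.

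For $(\Leftarrow)$, suppose $\mu = \delta_{(x_0,0)}$. Then the condition $(T_0)_\# \mu' = \mu$ forces $\supp(\mu') \subset \{x_0\} \times [0,\infty)$, so $I(\mu)$ identifies canonically with $\PP_p(\{x_0\} \times [0,\infty))$. On the fibre $\{x_0\} \times [0,\infty)$ the metric $d_q$ reduces to the standard metric on $[0,\infty)$, making this fibre a totally geodesic isometric copy of $[0,\infty)$ in $X \times_q [0,\infty)$. The associated Wasserstein space is therefore a totally geodesic subspace of $\PP_p(X \times_q [0,\infty))$, and it is geodesic because $\PP_p([0,\infty))$ is.

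For $(\Rightarrow)$, I argue the contrapositive. Suppose $\mu$ is not a Dirac, so $\supp(\mu)$ contains two distinct points $x_1, x_2$. Set $d = d_X(x_1,x_2)$, fix $\epsilon \in (0, d/6)$, and let $B_i = B(x_i, \epsilon)$ be disjoint open balls; since $x_i \in \supp(\mu)$, one has $\mu(B_i \times \{0\}) > 0$. Choose constants $c_i \in (0,1]$ so that the rescaled restrictions $\tilde{\mu}_i := c_i \mu|_{B_i \times \{0\}} \leq \mu$ share a common mass $\alpha > 0$. (Using rescaled restrictions rather than Borel subsets avoids any non-atomicity assumption on $\mu$ at $x_1$ or $x_2$.) Fix a height $a > d + 2\epsilon$ and set
\[
\mu_1 := (T_a)_\# \tilde{\mu}_1 + (\mu - \tilde{\mu}_1), \qquad \mu_2 := (T_a)_\# \tilde{\mu}_2 + (\mu - \tilde{\mu}_2).
\]
Both measures lie in $I(\mu)$. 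Since $T_0$ is $1$-Lipschitz, $I(\mu) = T_0^{-1}(\{\mu\})$ is closed in the complete space $\PP_p(X \times_q [0,\infty))$, hence itself complete; consequently $I(\mu)$ is geodesic if and only if every pair in it has a midpoint in it, and it suffices to show that no midpoint of $\mu_1, \mu_2$ lies in $I(\mu)$.

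The key computation compares two transport strategies. A plan that lowers $(T_a)_\# \tilde{\mu}_1$ vertically back to $X \times \{0\}$ and lifts a compensating mass up to $B_2 \times \{a\}$ incurs cost at least $2\alpha a^p$, while a plan that swaps $(T_a)_\# \tilde{\mu}_1$ horizontally with $(T_a)_\# \tilde{\mu}_2$ at height $a$ and $\tilde{\mu}_2$ horizontally with $\tilde{\mu}_1$ at height $0$ has cost at most $2\alpha(d + 2\epsilon)^p < 2\alpha a^p$. Cyclical monotonicity then forces every optimal $\pi \in \Opt(\mu_1, \mu_2)$ to route a strictly positive mass of $(T_a)_\# \tilde{\mu}_1$ horizontally into $B_2 \times \{a\}$. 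The corresponding displacement-interpolation midpoint $\mu_{1/2}$ therefore places positive mass on $M_{12} \times \{a,0\}$, where $M_{12}$ denotes the set of $X$-midpoints of points in $B_1 \times B_2$. A short triangle-inequality check, exploiting $\epsilon < d/6$, gives $M_{12} \cap (B_1 \cup B_2) = \emptyset$. Hence $T_0 \mu_{1/2}$ carries strictly more mass on $M_{12}$ than $\mu$ does, so $T_0 \mu_{1/2} \neq \mu$ and $\mu_{1/2} \notin I(\mu)$.

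I expect the main obstacle to be the precise implementation of the cyclical monotonicity step, which must rule out \emph{every} optimal plan---not merely the two extreme strategies---from producing a midpoint whose $T_0$-projection equals $\mu$. This amounts to checking that the swap-versus-identity comparison is pointwise strict for every pair of points in the supports of $\mu_1, \mu_2$ that could be involved in an exchange, so that no mixed plan can recreate the pure-identity midpoint structure that would preserve $\mu$ under projection.
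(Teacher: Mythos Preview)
The forward direction is correct and matches the paper.  For the reverse direction, the paper uses a \emph{three}-height construction rather than your two-height one: with $D=\diam(\supp\mu)$ and balls $B_x,B_y$ around diameter-realising points, it lifts the $B_x$-mass to height $2D$ and the complementary mass $C=X\setminus(B_x\cup B_y)$ to height $4D$ (and symmetrically for the second measure).  Cyclical monotonicity then freezes the $C$-mass at height $4D$ and forces the $B_x$--$B_y$ swap at height $2D$; since that swap passes through $C$ in the $X$-coordinate, any midpoint $\nu_t$ satisfies $\nu_t(C\times[0,\infty))>\mu(C)$, directly contradicting membership in $I(\mu)$.

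Your simpler two-height construction has two gaps.

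(i) The bound $a>d+2\varepsilon$ is too weak.  To rule out a vertical exchange $((u,a),(v,0))$, $((w,0),(z,a))$ in an optimal plan via cyclical monotonicity you must compare $2a^p$ against $d_X(u,z)^p+d_X(w,v)^p$, and while $u\in B_1$, $z\in B_2$ gives $d_X(u,z)<d+2\varepsilon$, the points $v,w$ range over all of $\supp\mu$, so the right-hand side is only bounded by roughly $(d+2\varepsilon)^p+\diam(\supp\mu)^p$.  Your global comparison of two specific plans shows the pure-vertical plan is suboptimal, but cyclical monotonicity is a pointwise condition, and that is what is needed to constrain \emph{every} optimal plan.  (Easily repaired by enlarging $a$.)

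(ii) More seriously, the claim ``$T_0\mu_{1/2}$ carries strictly more mass on $M_{12}$ than $\mu$ does'' fails in general.  Take $X=[0,10]$, $\mu=\tfrac13(\delta_0+\delta_5+\delta_{10})$, $x_1=0$, $x_2=10$, small $\varepsilon$, $p=q=2$, and $a$ large.  The optimal plan keeps heights separated; at height $a$ it sends $(0,a)\to(10,a)$, while at height $0$ the optimal transport from $\tfrac13\delta_5+\tfrac13\delta_{10}$ to $\tfrac13\delta_0+\tfrac13\delta_5$ is the monotone shift $10\mapsto 5$, $5\mapsto 0$.  The midpoint projects to $\tfrac13(\delta_{2.5}+\delta_5+\delta_{7.5})$, whose mass on $M_{12}$ (a small neighbourhood of $5$) is exactly $\tfrac13=\mu(M_{12})$: the height-$0$ midpoint lost from $M_{12}$ precisely what the height-$a$ piece added.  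The midpoint is indeed outside $I(\mu)$ here, but your $M_{12}$-mass invariant does not detect it.

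The paper's device of parking the $C$-mass at a separate high level is exactly what prevents such cancellations and gives a set on which the mass comparison is clean.
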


\begin{proof}
Fix $x\in X$ and consider the map $i_x\colon [0,\infty) \rightarrow X\times [0,\infty)$ given by $i_x(t) = (x,t)$.  Observe that $i_x$ is an isometric embedding of $[0,\infty)$ into $X\times [0,\infty)$ with image $\{x\}\times [0,\infty)$. Therefore, $i_{x\#}\colon \PP_p([0,\infty))\to \PP_p(X\times [0,\infty))$ is an isometry onto its image.
We now show that $i_{x\#}( \PP_p([0,\infty))= I(\delta_{(x,0)})$. 
Indeed, for any $\mu\in \PP_p([0,\infty))$, we have
\[
T_{0\#}i_{x\#}\mu = (T_0\circ i_x)_\#\mu = c_{(x,0)\#}\mu = \delta_{(x,0)},
\]
where $c_{(x,0)}\colon [0,\infty)\to X\times \{0\}$ is the constant map $c_{(x,0)}(t) = (x,0)$. Hence, $i_{x\#}\mu\in I(\delta_{(x,0)})$.

Conversely, if $\nu\in I(\delta_{(x,0)})$, define $\mu\in\PP_p([0,\infty))$ by
\[
\mu(S) = \nu(i_x(S))
\]
for any Borel measurable set $S\subset [0,\infty)$. Then, for any Borel measurable set $E\subset X\times [0,\infty)$,
\[
i_{x\#}\mu (E) = \mu(i_x^{-1}(E)) = \nu(i_x(i_x^{-1}(E))) = \nu(E\cap \{x\}\times [0,\infty)) = \nu(E),
\]
which means that $i_{x\#}\mu = \nu$. Therefore, $i_{x\#}( \PP_p([0,\infty))= I(\delta_{(x,0)})$, which implies that $I(\delta_{(x,0)})$ is geodesic.

Now, take $\mu \in \PP_p(X\times\lbrace 0\rbrace)\setminus \Delta_1(X\times\{0\})$. We will show that $I(\mu)$ is not geodesic. Specifically, we will find two measures $\nu_0$ and $\nu_1$ in $I(\mu)$ such that any geodesic joining them is not contained in $I(\nu)$.

Since $\mu$ is not a Dirac measure, $\diam(\supp(\mu)) >0$. Given that $X$ is compact, we may select $(x,0),(y,0)\in \supp(\mu)$ such that 
\[
d_X(x,y) = d_p((x,0),(y,0)) = \diam (\supp(\mu)).
\]

Let $B_x$ and $B_y$ be balls around $(x,0)$ and $(y,0)$, respectively, such that any geodesic joining  points in $B_x$ does not intersect $B_y$ and any geodesic joining points in $B_y$ does not intersect $B_x$. Set $C= X\setminus B_x\cup B_y$. Since $B_x$ and $B_y$ are disjoint, we can decompose $\mu$ as 
\[
\mu = \mu\llcorner B_x + \mu\llcorner C +\mu\llcorner B_y.
\]
Without loss of generality, assume $\mu(B_x)\leq \mu(B_y)$. 
Set $D=\diam(\supp(\mu))$ and consider the measures
\[
\nu_0 = T_{2D\#}\mu\llcorner B_x + T_{4D\#}\mu\llcorner C+ \mu\llcorner B_y 
\]
and
\[
\nu_1 = \mu\llcorner B_x + T_{4D\#}\mu\llcorner C+ T_{2D\#}\mu\llcorner B_y,
\]
which clearly belong to $I(\mu)$, since $T_{0\#}(\nu_0)=T_{0\#}(\nu_1) = \mu$ (see figure~\ref{fig:rigidity of cylinders}).

Having defined the measures $\nu_0$ and $\nu_1$, we now show that any geodesic joining them must leave $I(\mu)$.
Let $\pi\in\Opt(\nu_0,\nu_1)$ and suppose there exist points
$((u,4D),(v,\alpha))$ and $((w,\beta),(z,4D))$ in $\supp(\pi)$ with $\alpha,\beta\in \lbrace 0,2D\rbrace$. Then
\begin{align*}
    d_X^p(u,z)+d_X^p(w,v)+|\beta-\alpha|^p &\leq 2(\diam X)^p+(2\diam X)^p\\
    &\leq d_X^p(u,v)+|4D-\alpha|^p+d_X^p(w,z)+|4D-\beta|^p,
\end{align*}
which contradicts the fact that  $\supp(\mu)$ must be cyclically monotone. Thus, $\pi|_{T_{4D}(C)\times T_{4D}(C)}$ is supported on the diagonal of $T_{4D}(C)\times T_{4D}(C)$.

Similarly, $\pi$ must move mass between $T_{2D}(B_x)$ and $T_{2D}(B_y)$.
That is, we cannot only have points in $\supp(\pi)$ of the form $((a,2D),(b,0)), ((d,0),(e,2D))$ with $a,b\in B_x, d,e \in B_y.$ To see this, notice that
\[
d_X^p(a,e)+d_X^p(d,b)\leq 2(\diam X)^p\leq d_X^p(a,b)+(2D)^p+d_X^p(d,e)+(2D)^p,
\]
which again contradicts cyclical monotonicity. 

\begin{figure}
    \centering
    \def\svgwidth{0.4\linewidth}
\begingroup%
  \makeatletter%
  \providecommand\color[2][]{%
    \errmessage{(Inkscape) Color is used for the text in Inkscape, but the package 'color.sty' is not loaded}%
    \renewcommand\color[2][]{}%
  }%
  \providecommand\transparent[1]{%
    \errmessage{(Inkscape) Transparency is used (non-zero) for the text in Inkscape, but the package 'transparent.sty' is not loaded}%
    \renewcommand\transparent[1]{}%
  }%
  \providecommand\rotatebox[2]{#2}%
  \newcommand*\fsize{\dimexpr\f@size pt\relax}%
  \newcommand*\lineheight[1]{\fontsize{\fsize}{#1\fsize}\selectfont}%
  \ifx\svgwidth\undefined%
    \setlength{\unitlength}{196.71174057bp}%
    \ifx\svgscale\undefined%
      \relax%
    \else%
      \setlength{\unitlength}{\unitlength * \real{\svgscale}}%
    \fi%
  \else%
    \setlength{\unitlength}{\svgwidth}%
  \fi%
  \global\let\svgwidth\undefined%
  \global\let\svgscale\undefined%
  \makeatother%
  \begin{picture}(1,1.47340485)%
    \lineheight{1}%
    \setlength\tabcolsep{0pt}%
    \put(0,0){\includegraphics[width=\unitlength,page=1]{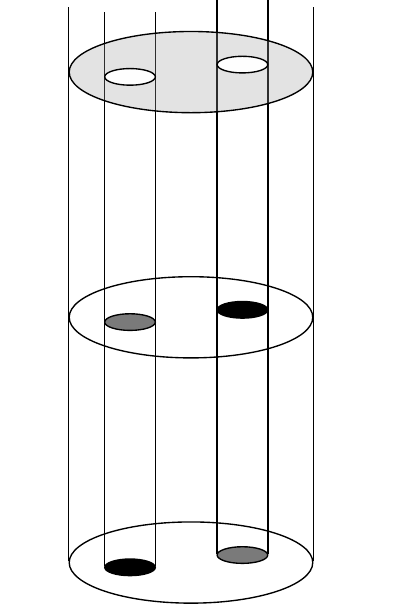}}%
    \put(0.80073257,0.07887659){\color[rgb]{0,0,0}\makebox(0,0)[lt]{\lineheight{1.25}\smash{\begin{tabular}[t]{l}$X\times \{0\}$\end{tabular}}}}%
    \put(0.80075252,0.68126789){\color[rgb]{0,0,0}\makebox(0,0)[lt]{\lineheight{1.25}\smash{\begin{tabular}[t]{l}$X\times \{2D\}$\end{tabular}}}}%
    \put(0.80075232,1.27604671){\color[rgb]{0,0,0}\makebox(0,0)[lt]{\lineheight{1.25}\smash{\begin{tabular}[t]{l}$X\times \{4D\}$\end{tabular}}}}%
    \put(0,0){\includegraphics[width=\unitlength,page=2]{rigidity_of_cylinders.pdf}}%
    \put(0.00013371,0.67745203){\color[rgb]{0,0,0}\makebox(0,0)[lt]{\lineheight{1.25}\smash{\begin{tabular}[t]{l}$\phantom{X\times \{2D\}}$\end{tabular}}}}%
  \end{picture}%
\endgroup%

    \caption{In this diagram, the deep grey regions represent $\supp(\nu_0)\cap X\times\{0,2D\}$; the dark regions represent $\supp(\nu_1)\cap X\times \{0,2D\}$; and pale grey region on top represent $\supp(\nu_0)\cap X\times\{4D\}=\supp(\nu_1)\cap X\times\{4D\}$. The arrows represent any optimal plan between $\nu_0$ and $\nu_1$; any such optimal plan fixes the mass on $X\times \{4D\}$.}
    \label{fig:rigidity of cylinders}
\end{figure}

Therefore, if $(\nu_t)_{t\in [0,1]}$ is the geodesic joining $\nu_0$ and $\nu_1$, by \cite[Theorem 2.10]{ambrosio-gigli} we have $\nu_t=(e_t)_\#\Gamma$ for some measure $\Gamma$ on $\Geo(X)$ such that $(e_0,e_1)_\#\Gamma = \pi$. In particular, for $t\in (0,1)$, and due to the previous arguments, 
\begin{align*}
\nu_t(C\times [0,\infty)) &= \Gamma(e^{-1}_t(C\times [0,\infty)))\\
&= \Gamma(e^{-1}_t(C\times\{4D\}))+\Gamma(e^{-1}_t(C\times [0,2D])) \\
&= \mu(C) + \Gamma(e^{-1}_t(C\times [0,2D]))\\
&> \mu(C) = \nu_0(C\times [0,\infty)).
\end{align*}
However, if $\nu_t\in I(\mu)$, we would have $\nu_t(C\times [0,\infty))= \nu_0(C\times [0,\infty)$. 
\end{proof}

We now show the invariance of the set of Dirac measures under isometries of $\PP_p(X\times_q [0,\infty))$.

\begin{proposition}\label{prop:invariance.of.dirac.measure}
If $\Phi \in \Isom (\PP_p(X\times_q[0,\infty)))$, then $\Phi(\Delta_1(X\times_q[0,\infty)))= \Delta_1(X\times_q[0,\infty))$
\end{proposition}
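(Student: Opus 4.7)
The plan is to combine the previous metric characterisations with the isometric rigidity of the ray, theorem~\ref{t:rigidity of rays}, in order first to pin down the images of Dirac measures lying on the base $X\times\{0\}$ and then to propagate the conclusion to Dirac measures at arbitrary heights $t\geq 0$.

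First I would chain proposition~\ref{lem:cap of cylinder is invariant}, lemma~\ref{lem:invariance of sets I} and lemma~\ref{lem:characterisation of dirac measures} to conclude that $\Delta_1(X\times\{0\})$ is invariant under $\Phi$. Indeed, $\Phi$ preserves $\PP_p(X\times\{0\})$, intertwines the assignment $\nu\mapsto I(\nu)$, and, among measures supported on $X\times\{0\}$, the property ``$I(\mu)$ is geodesic'' metrically characterises $\mu\in\Delta_1(X\times\{0\})$. Consequently, for each $x\in X$ there exists a unique $x'\in X$ with $\Phi(\delta_{(x,0)})=\delta_{(x',0)}$.

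Next, I would exploit the explicit description, already established inside the proof of lemma~\ref{lem:characterisation of dirac measures}, of $I(\delta_{(x,0)})$ as the image $i_{x\#}(\PP_p([0,\infty)))$ of an isometric embedding, so that $i_{x\#}$ and $i_{x'\#}$ are actual isometries onto their images. By lemma~\ref{lem:invariance of sets I}, $\Phi$ restricts to a surjective isometry $I(\delta_{(x,0)})\to I(\delta_{(x',0)})$, and hence the composition
\[
\widetilde\Phi \;=\; i_{x'\#}^{-1}\circ\Phi|_{I(\delta_{(x,0)})}\circ i_{x\#}
\]
is an isometry of $\PP_p([0,\infty))$. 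By theorem~\ref{t:rigidity of rays}, $\widetilde\Phi$ is the push-forward of an isometry of $[0,\infty)$; but $\Isom([0,\infty))=\{\mathrm{id}\}$, so $\widetilde\Phi=\mathrm{id}$. Evaluating at $\delta_t$ then yields, for all $t\geq 0$,
\[
\Phi(\delta_{(x,t)}) \;=\; \Phi(i_{x\#}\delta_t) \;=\; i_{x'\#}\delta_t \;=\; \delta_{(x',t)}.
\]
Applying the same argument to $\Phi^{-1}$ gives the claimed equality $\Phi(\Delta_1(X\times_q[0,\infty)))=\Delta_1(X\times_q[0,\infty))$.

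The main obstacle I anticipate is the matching of heights across $x$ and $x'$: without the full strength of theorem~\ref{t:rigidity of rays}, one could only deduce that $\Phi(\delta_{(x,t)})$ is some element of $I(\delta_{(x',0)})$, which a priori need not even be a Dirac measure, let alone one at the same height $t$. The non-trivial content---that $\Phi$ both preserves fibres $\{x\}\times[0,\infty)$ and acts as the identity in the $[0,\infty)$-coordinate inside each fibre---comes precisely from the triviality of $\Isom(\PP_p([0,\infty)))$ established in theorem~\ref{t:rigidity of rays}.
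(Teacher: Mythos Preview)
Your proposal is correct and follows essentially the same approach as the paper's own proof: first invoke proposition~\ref{lem:cap of cylinder is invariant}, lemma~\ref{lem:invariance of sets I} and lemma~\ref{lem:characterisation of dirac measures} to show that Dirac measures on the base are sent to Dirac measures on the base, and then conjugate $\Phi$ by the fibre embeddings $i_{x\#}$, $i_{x'\#}$ to obtain an isometry of $\PP_p([0,\infty))$, which is the identity by theorem~\ref{t:rigidity of rays}. The paper argues identically, only phrasing the reverse inclusion as ``apply the same to $\Phi^{-1}$'' at the outset rather than at the end.
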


\begin{proof}
Let $\Phi\in\Isom(\PP_p(X\times_q[0,\infty)))$. It is sufficient to prove that 
\[\Phi(\Delta_1(X\times[0,\infty))\subset\Delta_1(X\times [0,\infty)),\]
since the reverse inclusion follows from considering $\Phi^{-1}$. We consider two cases:
Dirac measures of the form $\delta_{(x,0)}$, for some $x\in X$, and arbitrary Dirac measures $\delta_{(z,t)}$ with $(z,t)\in X\times[0,\infty)$.

First, consider $\delta_{(x,0)}$ for some $x\in X$. By proposition~\ref{lem:cap of cylinder is invariant}, $\Phi(\delta_{(x,0)})\in \PP_p(X\times\{0\})$. Moreover, by lemmas \ref{lem:invariance of sets I} and \ref{lem:characterisation of dirac measures}, $\Phi(\delta_{(x,0)})$ is also a Dirac measure supported on $X\times \{0\}$ and the conclusion follows. 

Now, consider $\delta_{(z,t)}$ with $(z,t)\in X\times [0,\infty)$. Then $\delta_{(z,t)}\in I(\delta_{(z,0)})$, and from the previous case, we know $\Phi(\delta_{(z,0)})= \delta_{(y,0)}$ for some $y\in X$. By lemma~\ref{lem:invariance of sets I}, it follows that $
\Phi(\delta_{(z,t)})\in I(\delta_{(y,0)})$.
Furthermore, recall that the maps $i_y,i_z\colon [0,\infty)\to X\times [0,\infty)$ given by $t\mapsto (y,t)$ and $t\mapsto (z,t)$, respectively, are isometric embeddings. Therefore, $i_{y\#}^{-1}\circ \Phi \circ i_{z\#}\in \Isom (\PP_p([0,\infty))).$ By theorem~\ref{t:rigidity of rays}, we have
$i_{y\#}^{-1}\circ \Phi \circ i_{z\#} = \id$, which implies that $\Phi(\delta_{(z,t)})=\delta_{(y,t)}$, yielding the result.
\end{proof}

\begin{remark}
    Note that, under the hypotheses of theorem~\ref{t:rigidity of cylinders}, we may prove lemma~\ref{prop:invariance.of.dirac.measure} using a simpler argument. Indeed, if $\Phi\in \Isom(\PP_p(X\times [0,\infty))$, by proposition~\ref{lem:cap of cylinder is invariant}, $\Phi|_{\PP_p(X\times \{0\})}$ is an isometry of $\PP_p(X\times \{0\})$.  Assuming that $X$ is isometrically rigid, it follows that, for some $\phi \in \Isom(X)$, $\Phi(\mu) = \phi_\#(\mu)$ whenever $\mu\in\PP_p(X\times \{0\})$. In particular, $\Phi(\delta_{(x,0)})=\delta_{(\phi(x),0)}$ for any $(x,0)\in X\times \{0\}$. The rest of the argument is as in the second paragraph of the proof of lemma~\ref{prop:invariance.of.dirac.measure}.
\end{remark}

\subsection{Invariance of atomic measures} 
So far, we have only assumed that $X$ is compact, geodesic, and non-branching. To establish the invariance of atomic measures and achieve the desired isometric rigidity, we will now assume that $X$ satisfies the following technical condition for the remainder of the section.

\begin{cond}\label{cond:manifold point}
For any $N\in \NN$ and $\{x_1,\dots,x_N\}\subset X$, there exists $x_o\in X$ such that $\{x_1,\dots,x_N\}$ is contained in the set 
\begin{align*}
J(x_o)=\,&\lbrace y\in X: \text{there exists } \gamma\in \Geo(X) \text{ with } \gamma_0= y, \gamma_{t_o}= x_o \text{ for some } t_o\in (0,1)   \rbrace\\
=\,&\{y \in X: x_o\in\Cut(y)\}.
\end{align*}
\end{cond}

\begin{remark}\label{rem:manifolds and condition a}
   Condition \ref{cond:manifold point} holds for Riemannian manifolds. Indeed, for any $\{x_1,\dots,x_N\}\subset X$, we know that $X\setminus (\Cut(x_i)\cup \{x_i\})$ is open and dense for $i= 1,\dots, N$ (see subsection \ref{ss:geodesics and curvature bounds}). Therefore, $\bigcap_{i=1}^{N}X\setminus (\Cut(x_i)\cup \{x_i\})$ is open and dense. In particular, for any $x_o\in\bigcap_{i=1}^{N}X\setminus (\Cut(x_i)\cup \{x_i\})$, the required condition holds.
\end{remark}

We will use the following lemma to establish the invariance of atomic measures.

\begin{lemma}\label{prop.invariant-totallyatomic-cylinder}
Assume that $X$ satisfies Condition \ref{cond:manifold point}, and let $\Phi\in \Isom(\PP_p(X\times_p[0,\infty)))$ be such that $\Phi|_{\Delta_1}=\id_{\Delta_1}$. For $n\geq 2$, the set $\Delta_{n}(X\times_q [0,\infty))$, consisting of atomic measures with at most $n$ atoms, is invariant under $\Phi$.  
\end{lemma}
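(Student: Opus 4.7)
I would argue by induction on $n \geq 2$; the base case $n = 1$ is given, and I assume $\Phi(\Delta_k) = \Delta_k$ for $1 \leq k < n$. Given $\mu = \sum_{i=1}^n \lambda_i \delta_{p_i} \in \Delta_n \setminus \Delta_{n-1}$ with $p_i = (x_i, t_i)$ distinct and $\lambda_i > 0$, the key construction is a Dirac $\delta_q$ adapted to $\mu$ together with a geodesic in $\PP_p$ linking them that remains in $\Delta_n$ except at $\delta_q$.

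Using Condition \ref{cond:manifold point} on $\{x_1, \dots, x_n\} \subset X$, I obtain $x_o \in X$ such that each $x_i \in J(x_o)$, and I set $q = (x_o, T)$ for some $T > \max_i t_i$. Using the product structure of $X \times_q [0, \infty)$, each $p_i$ then lies on a geodesic passing through $q$ at an interior time. After synchronising parameters and extending past $q$, I obtain constant-speed geodesics $\tilde\gamma^i \colon [0,1] \to X \times_q [0, \infty)$ with $\tilde\gamma^i(0) = p_i$, $\tilde\gamma^i(v) = q$ for a common $v \in (0, 1)$, and $\tilde\gamma^i(1) = r_i$. The curve $\sigma_s := \sum_{i=1}^n \lambda_i \delta_{\tilde\gamma^i(s)}$ is then a constant-speed geodesic in $\PP_p$ from $\mu$ through $\delta_q$ to $\mu^+ := \sum_i \lambda_i \delta_{r_i}$, and non-branching of $X \times_q [0, \infty)$ ensures $\sigma_s \in \Delta_n \setminus \Delta_{n-1}$ for $s \in (0, 1) \setminus \{v\}$.

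Since $\Phi$ fixes $\delta_q$ and preserves the parameterisation of geodesics, $\Phi \circ \sigma$ is again a geodesic through $\delta_q$ at parameter $v$. Any geodesic with one endpoint at a Dirac $\delta_q$ has the form $s \mapsto (F_s)_\# \nu$, with $F_s$ acting by transport along geodesics emanating from $q$ (unique by non-branching). Consequently, on each side of $s = v$, the number of atoms of $\Phi \circ \sigma(s)$ is constant and equals $|\Phi(\mu)|$ or $|\Phi(\mu^+)|$; unique extension past $q$ in the base space further forces a bijection between the atoms of $\Phi(\mu)$ and $\Phi(\mu^+)$ via $\delta_q$, so $|\Phi(\mu)| = |\Phi(\mu^+)|$.

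The main obstacle is bounding $|\Phi(\mu)|$ from above by $n$. I expect to combine the inductive hypothesis with continuity of $\Phi$: perturbing $\mu$ so that two of its atoms merge yields $\Delta_{n-1}$-measures whose images under $\Phi$ stay in $\Delta_{n-1}$, and tracking these along geodesics of the form of $\sigma$ near $\delta_q$ should constrain the limit $\Phi(\mu)$ to have at most $n$ atoms. Finally, to rule out $\Phi(\mu) \in \Delta_{n-1}$, the inductive hypothesis applied to $\Phi^{-1}$ would otherwise give $\mu = \Phi^{-1}(\Phi(\mu)) \in \Delta_{n-1}$, contradicting $\mu \notin \Delta_{n-1}$. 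Hence $\Phi(\mu) \in \Delta_n \setminus \Delta_{n-1}$, completing the induction.
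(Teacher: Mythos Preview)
Your construction has a fundamental flaw that appears before the step you flag as the main obstacle: the curve $\sigma_s = \sum_i \lambda_i \delta_{\tilde\gamma^i(s)}$ is \emph{not} a geodesic on $[0,1]$. In a non-branching space with $p>1$, the evaluation $e_v$ is injective on the support of the lift of any Wasserstein geodesic for every interior $v$; hence a geodesic in $\PP_p$ whose interior contains a measure in $\Delta_k$ lies entirely in $\Delta_k$. Your $\sigma$ has $\sigma_v=\delta_q\in\Delta_1$ at an interior time but $\sigma_0=\mu\in\Delta_n\setminus\Delta_1$, so it cannot be a geodesic. Concretely, the plan $p_i\mapsto r_i$ from $\mu$ to $\mu^+$ generically fails cyclical monotonicity (crossing through the common point $q$ is typically more expensive than pairing directly), so $\delta_q$ is not an intermediate point of $\mu$ and $\mu^+$. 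Your $\sigma$ is only a broken geodesic, and applying $\Phi$ to each half separately yields geodesics from $\Phi(\mu)$ and $\Phi(\mu^+)$ to $\delta_q$, which gives no bound on the number of atoms of $\Phi(\mu)$. Your fallback---approximating $\mu$ by measures in $\Delta_{n-1}$ obtained by merging atoms---also fails, since $\Delta_{n-1}$ is closed and does not contain $\mu$.

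The paper's argument avoids both problems by routing through an $n$-atomic fixed measure rather than a Dirac. Taking the heights $t_i$ pairwise distinct (a dense condition) and projecting onto the fibre $\{x_o\}\times[0,\infty)$ via $S(x,t)=(x_o,t)$ yields $S_\#\mu=\sum_i\lambda_i\delta_{(x_o,t_i)}\in I(\delta_{(x_o,0)})\cap\Delta_n$, still with $n$ atoms. The key input is Theorem~\ref{t:rigidity of rays}: since $I(\delta_{(x_o,0)})$ is isometric to $\PP_p([0,\infty))$ and $\Phi$ fixes all Diracs, $\Phi$ restricts to the identity on $I(\delta_{(x_o,0)})$, so $\Phi(S_\#\mu)=S_\#\mu$. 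Condition~\ref{cond:manifold point} is then used \emph{horizontally}, to extend each $X$-geodesic from $x_i$ past $x_o$, producing a Wasserstein geodesic $(\eta_t)$ that has $S_\#\mu$ in its \emph{interior} and a point $\mu_{t_o}$ (itself interior to the geodesic from $\mu$ to $S_\#\mu$) at one end. Since $\Phi(\eta_t)$ is a geodesic through the $n$-atomic measure $S_\#\mu$, the injectivity of $e_v$ forces $\Phi(\eta_t)\in\Delta_n$ for all $t$; in particular $\Phi(\mu_{t_o})\in\Delta_n$, and the same reasoning on the geodesic from $\mu$ to $S_\#\mu$ gives $\Phi(\mu)\in\Delta_n$. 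No induction is needed.
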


\begin{proof}
Take $\mu = \sum_{i=1}^n \lambda_i\delta_{(x_i,t_i)}$ such that $t_i\neq t_j$ for all $i\neq j$ and let $x_o\in X$ be such that $x_i\in J(x_o)$ for all $i$, as in Condition \ref{cond:manifold point}. Such measures are dense in $\Delta_n(X\times_q [0,\infty))$. Consider the map $S\colon X\times_q [0,\infty) \rightarrow X \times_q [0,\infty) $ given by $S(x,t)= (x_o,t)$. Note that $S_{\#}\mu\in I(\delta_{(x_o,0)})\cap \Delta_n(X\times_q [0,\infty))$. We now show that $\W_p(\mu,S_{\#}\mu)=\W_p(\mu,I(\delta_{(x_o,0)})).$  Take $\overline{\mu}\in I(\delta_{(x_o,0)})$ and $\pi\in \Opt(\mu,\overline{\mu})$. Then, 
\begin{align*}
\int d_q^p((x,t),(x_o,s))\ d\pi ((x,t),(x_o,s)) &\geq \int d_X^p(x,x_o)\ d\pi((x,t),(x_o,s))\\  
&= \int d_q^p((x,t),(x_o,s))\ d(\id, S)_{\#}\mu((x,t),(x_o,s)),
\end{align*}
which implies 
\[
\W_p(\mu,\overline{\mu}) \geq \W_p(\mu,S_\#\mu).
\]

Since $x_i\in J(x_o)$ for $i=1,\dots, n$, there exist points $z_i\in X$ and geodesics $\gamma^i$ such that $\gamma_0^i = x_i$, $\gamma_1^i = z_i$, and $\gamma^i$ passes through $x_o$. 
Let $(\mu_t)_{t\in [0,1]}$ be the geodesic joining $\mu$ and $S_{\#}\mu$ with mass moving along the geodesics $s\mapsto (\gamma^i_s,t_i)$.
Pick $t_o\in (0,1)$ so that $\max d_X(\gamma_{t_o}^i, x_o) <\frac{1}{2}\min_{i\neq j}|t_i-t_j|$. 
Then the geodesic $(\eta_t)_{t\in [0,1]}$ joining $\mu_{t_o}$ and $\sum_{i=1}^n \lambda_i\delta_{(z_i,t_i)}$ moves along the geodesics $s\mapsto (\gamma^i_s,t_i)$, $1\leq i\leq n$. 
Moreover, $(\eta_t)_{t\in [0,1]}$ passes through $S_{\#}\mu$. 

Since $S_{\#}\mu \in I(\delta_{(x_o,0)})\cap \Delta_n(X\times[0,\infty))$, we have $\Phi(S_{\#}\mu)=S_{\#}\mu$, which implies that $\Phi(\eta_t)\in \Delta_n(X\times_q [0,\infty))$ for all $t\in [0,1].$ In particular, $\Phi(\mu_{t_0})\in \Delta_n(X\times_q [0,\infty))$, so  $\Phi(\mu)\in \Delta_n(X\times_q [0,\infty))$, as desired.
\end{proof}

Combining lemma~\ref{prop.invariant-totallyatomic-cylinder} and proposition~\ref{lem:characterisation of isometric rigidity}, we obtain the invariance of atomic measures.

\begin{corollary}
Assume that $X$ satisfies the hypotheses of theorem~\ref{t:rigidity of cylinders}, and let $n\in\NN$. Then the set of atomic measures $\Delta_n(X\times [0,\infty))$ is invariant under the action of $\Isom(\PP_p(X\times_q [0,\infty))$.
\end{corollary}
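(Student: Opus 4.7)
The plan is to reduce the statement to Lemma \ref{prop.invariant-totallyatomic-cylinder} by pre-composing an arbitrary isometry with a suitable push-forward isometry. Let $\Phi \in \Isom(\PP_p(X\times_q [0,\infty)))$. The case $n=1$ is handled directly by Proposition \ref{prop:invariance.of.dirac.measure}, so it suffices to treat $n\geq 2$.

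First, I will extract an ambient isometry of $X\times_q[0,\infty)$ that matches the action of $\Phi$ on Dirac measures. By Proposition \ref{prop:invariance.of.dirac.measure}, $\Phi$ preserves $\Delta_1(X\times_q[0,\infty))$, and inspection of its proof (which uses Theorem \ref{t:rigidity of rays} applied to the vertical rays $\{x\}\times [0,\infty)$) shows that the $[0,\infty)$-coordinate is preserved: there is a map $\phi_X\colon X\to X$ such that $\Phi(\delta_{(x,t)}) = \delta_{(\phi_X(x),t)}$ for every $(x,t)\in X\times [0,\infty)$. Since $\Delta_1$ is isometric to $X\times_q[0,\infty)$ and $\Phi|_{\Delta_1}$ is an isometry whose action fixes the second factor, $\phi_X$ is itself an isometry of $X$. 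Thus $\widetilde{\phi}:=\phi_X\times\id_{[0,\infty)}$ belongs to $\Isom(X\times_q[0,\infty))$ and $\widetilde{\phi}_\#$ belongs to $\Isom(\PP_p(X\times_q[0,\infty)))$.

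Now consider $\Psi := \widetilde{\phi}_\#^{-1}\circ \Phi$, which is an isometry of $\PP_p(X\times_q[0,\infty))$ satisfying $\Psi|_{\Delta_1}=\id_{\Delta_1}$. Applying Lemma \ref{prop.invariant-totallyatomic-cylinder} to $\Psi$ shows that $\Psi(\Delta_n(X\times[0,\infty)))=\Delta_n(X\times[0,\infty))$ for every $n\geq 2$. Since $\widetilde{\phi}_\#$ clearly maps atomic measures supported on at most $n$ points to atomic measures of the same form, the composition $\Phi = \widetilde{\phi}_\#\circ \Psi$ also preserves $\Delta_n$, which proves the claim.

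The main (and essentially the only) obstacle is ensuring that $\Phi|_{\Delta_1}$ splits as the Dirac lift of $\phi_X\times \id_{[0,\infty)}$; this is precisely what the second half of the proof of Proposition \ref{prop:invariance.of.dirac.measure} delivers, via the rigidity of the ray. Everything else is a routine reduction: once the Dirac part of $\Phi$ is absorbed by a push-forward isometry, Lemma \ref{prop.invariant-totallyatomic-cylinder} applies verbatim.
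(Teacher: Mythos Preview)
Your argument is correct and follows essentially the same route as the paper: decompose an arbitrary isometry $\Phi$ as $\widetilde{\phi}_\#\circ\Psi$ with $\Psi|_{\Delta_1}=\id_{\Delta_1}$, then apply Lemma~\ref{prop.invariant-totallyatomic-cylinder} to $\Psi$ and observe that $\widetilde{\phi}_\#$ trivially preserves each $\Delta_n$. The paper phrases this as ``combining Lemma~\ref{prop.invariant-totallyatomic-cylinder} and Proposition~\ref{lem:characterisation of isometric rigidity}'', the latter supplying exactly the decomposition $\phi_\#^{-1}\circ\Phi\in K$ once $\Delta_1$ is known to be invariant.

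One minor remark: your additional observation that the induced isometry on $X\times_q[0,\infty)$ has the product form $\phi_X\times\id_{[0,\infty)}$ is correct (and indeed follows from the proof of Proposition~\ref{prop:invariance.of.dirac.measure}), but it is not needed here. Any isometry $\phi$ of the half-cylinder---regardless of its shape---satisfies $\phi_\#(\Delta_n)=\Delta_n$, so the weaker input from Proposition~\ref{prop:invariance.of.dirac.measure} (mere invariance of $\Delta_1$) already suffices for the reduction.
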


Without assuming much about $X$, we have already gathered useful information on the behaviour of isometries of $\PP_p(X\times_q[0,\infty))$. To prove the rigidity of $X\times_q [0,\infty)$, we will now assume that $X$ is isometrically rigid. With the necessary results in place, we now turn to the proof of theorem~\ref{t:rigidity of cylinders}. 

\begin{proof}[Proof of theorem~\ref{t:rigidity of cylinders}]
Let $n\geq 2$ and consider a measure $\mu\in \PP_p(X\times_q [0,\infty))$ given by 
\begin{equation}
\label{eq:atomic.measures.in.general.position}
\mu = \sum_{i=1}^n \lambda_i\delta_{(x_i,t_i)},\quad x_i\neq x_j,\ t_i\neq t_j,\ \lambda_i\neq\lambda_j,\ \lambda_i\in (0,1)\text{ for all } i\neq j.
\end{equation}
It is clear that such measures are dense in $\Delta_n(X\times_q [0,\infty))$. Now, fix $z\in X$. As in the proof of lemma~\ref{prop.invariant-totallyatomic-cylinder}, we have 
\[
\argmin\left(\W_p(\mu,\cdot)|_{I(\delta_{(z,0)})}
\right)
= \left\{\sum_{i=1}^n \lambda_i\delta_{(z,t_i)}\right\}.
\]
Moreover, for any $\Phi \in K$ (with $K$ as in proposition~\ref{lem:characterisation of isometric rigidity}), the isometry $\Phi$ restricted to $I(\delta_{(z,0)})$ is the identity. Therefore,
\[
\argmin\left(\W_p(\Phi(\mu),\cdot)|_{I(\delta_{(z,0)})}\right)= \left\{\sum_{i=1}^n\lambda_i\delta_{(z,t_i)}\right\}.
\]
This implies that $\Phi(\mu) = \sum_{i=1}^n\lambda_i\nu_i$, where $\nu_i$ is supported on $X\times\lbrace t_i\rbrace$ for $i=1,\cdots,n$. Furthermore, by lemma~\ref{prop.invariant-totallyatomic-cylinder}, each $\nu_i$ is a Dirac measure. Hence, $\Phi(\mu)$ is an atomic measure with at most $n$ atoms. In other words,
\[
\Phi(\mu) = \sum_{i=1}^{n}\lambda_i\delta_{(y_i,t_i)}
\]
for some $y_i\in X$, $i=1,\dots,n$. Moreover, 
\[
\Phi(\mu) \in I\left(\Phi\left(\sum_{i=1}^n\lambda_i\delta_{(x_i,0)}\right)\right)
\]
and, since $X$ is isometrically rigid, we have
\[
\Phi\left(\sum_{i=1}^n\lambda_i\delta_{(x_i,0)}\right)= \sum_{i=1}^n\lambda_i\delta_{(x_i,0)}.
\]
Therefore, $\Phi(\mu)$ has exactly $n$ atoms of the form $(x_i,t_{\sigma(i)})$ for some permutation $\sigma$ of $\{1,\dots,n\}$. Additionally, we have
\[
\Phi(\mu)(\lbrace x_i\rbrace\times[0,\infty))=\Phi(\mu)(X\times\{t_i\})=\lambda_i.
\]
Hence, 
\[
\Phi(\mu)= \sum_{i=1}^n\lambda_i\delta_{(x_i,t_i)}=\mu.
\]
By the density of measures of the form \eqref{eq:atomic.measures.in.general.position}, every measure in $\Delta_n(X\times_q [0,\infty))$ is fixed and, since $n$ is arbitrary, we conclude that $\Phi = \id$. By proposition~\ref{lem:characterisation of isometric rigidity}, the half-cylinder $X\times_q [0,\infty)$ is isometrically rigid.
\end{proof}

\section{Proof of theorem~\ref{t:rigidity of compact spaces}: Isometric rigidity of spherical suspensions}\label{s:rigidity of compact spaces}
Let $X$ be a compact metric space. In this section, we prove that $\Susp(X)$, the spherical suspension of $X$, is isometrically rigid with respect to $\PP_p$ for any $p>1$, provided that the diameter of $X$ is less than $\pi/2$. Note that we do not require the existence of geodesics in $X$, as the geometry of the spherical suspension  provides enough structure.

Recall that $\Susp(X)$, the \emph{suspension} of $X$, is the quotient space
\[
\Susp(X) = (X\times [0,\pi])/\sim,
\]
where $(x,0)\sim(x',0)$ and $(x,\pi)\sim (x',\pi)$ for any $x,x'\in X$. To ease the notation, we let $\mathbf{0}= [x,0]$ and ${\ppi} = [x,\pi].$ We will also denote by $X\times\{t\}$ the set $(X\times \{t\})/\sim$, for any $t\in(0,\pi)$. We endow $\Susp(X)$ with the spherical suspension metric
\[
d_{\Susp}([x,t],[y,s]) = \cos^{-1}\left(\cos(t)\cos(s)+\sin(t)\sin(s)\cos(d(x,y)) \right).
\] 
The metric space $(\Susp(X),d_{\Susp})$ is the \emph{spherical suspension} of $X$.

We refer the reader to \cite[Propositions 5.9 and 5.10-2)]{BridsonHaefliger1999} for the proof of the following lemma on basic metric properties of $\Susp(X)$. 

\begin{lemma}\label{prop.propertiessuspension}
The spherical suspension $\Susp(X)$ has the following properties:
\begin{enumerate}
    \item $(\Susp(X), d_{\Susp})$ is a compact metric space.
    \item  Let $[x,t] \in \Susp(X)$ with $t\in(0,\pi)$. Then there exists a unique geodesic $\gamma\colon [0,1]\rightarrow \Susp(X)$ such that $\gamma_0 ={\mathbf{0}},$ $\gamma_1 = {\ppi},$ and $\gamma_{s}= [x,t]$
    for some $s \in [0,1].$
\end{enumerate}
\end{lemma}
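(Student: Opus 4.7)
My plan is to treat the two claims in sequence. For part (1), the first step is to verify that $d_{\Susp}$ is a well-defined metric on $\Susp(X)$. Well-definedness under $\sim$ is immediate, since when $t\in\{0,\pi\}$ we have $\sin(t)=0$ and the right-hand side of the formula reduces to $\cos^{-1}(\pm\cos(s))$, which is independent of the $X$-coordinates. Non-degeneracy is a direct case analysis of when $\cos(t)\cos(s)+\sin(t)\sin(s)\cos(d(x,y))=1$: either $s=t\in\{0,\pi\}$, or $s=t\in(0,\pi)$ and $d(x,y)=0$, i.e.\ the two representatives coincide in $\Susp(X)$. The triangle inequality is the content of the spherical join construction, and the cleanest route is the one used in Bridson--Haefliger: given three points, model their relative positions on a unit $2$-sphere where the distance formula for $d_{\Susp}$ coincides with the round metric, and invoke the classical spherical triangle inequality. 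Once $d_{\Susp}$ is a metric, the quotient map $q\colon X\times[0,\pi]\to\Susp(X)$ is continuous (the formula is jointly continuous in $(x,y,s,t)$), so $\Susp(X)$ is the continuous image of a compact space and is therefore compact.

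For part (2), existence is by explicit construction: the candidate geodesic is $\gamma^x_r=[x,\pi r]$ for $r\in[0,1]$, which evidently satisfies $\gamma^x_0=\mathbf{0}$, $\gamma^x_1=\ppi$, and $\gamma^x_{t/\pi}=[x,t]$. Substituting into the defining formula gives
\[
d_{\Susp}(\gamma^x_{r_1},\gamma^x_{r_2})=\cos^{-1}\bigl(\cos(\pi r_1)\cos(\pi r_2)+\sin(\pi r_1)\sin(\pi r_2)\bigr)=\pi|r_1-r_2|,
\]
confirming that $\gamma^x$ is a constant-speed geodesic of length $\pi$. For uniqueness, suppose $\eta\colon[0,1]\to\Susp(X)$ is any such geodesic with $\eta_{s_0}=[x,t]$ for some $s_0\in(0,1)$. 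Since $d_{\Susp}(\mathbf{0},\ppi)=\pi$, the geodesic $\eta$ has speed $\pi$. For $r\in(0,1)$ write $\eta_r=[y_r,u_r]$ with $u_r\in(0,\pi)$; then $d_{\Susp}(\mathbf{0},\eta_r)=\cos^{-1}(\cos(u_r))=u_r$, and constant speed forces $u_r=\pi r$. In particular $t=\pi s_0$. Applying the distance formula to $\eta_{s_0}=[x,\pi s_0]$ and $\eta_r=[y_r,\pi r]$ and using $d_{\Susp}(\eta_{s_0},\eta_r)=\pi|s_0-r|$, after cancelling $\cos(\pi s_0)\cos(\pi r)$ one obtains
\[
\sin(\pi s_0)\sin(\pi r)\cos(d(x,y_r))=\sin(\pi s_0)\sin(\pi r).
\]
Since $s_0,r\in(0,1)$, both sines are strictly positive, so $\cos(d(x,y_r))=1$, hence $y_r=x$. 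Therefore $\eta_r=\gamma^x_r$ for $r\in(0,1)$, and equality at the endpoints is automatic.

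The main obstacle is the triangle inequality in part (1): without the spherical-join model it is not entirely obvious from the formula. My preferred workaround, if a direct reference is not invoked, is to embed $\Susp(X)$ isometrically into a spherical cone by sending $[x,t]\mapsto\cos(t)\,e_0+\sin(t)\,\iota(x)$ in an ambient inner-product space where $\iota\colon X\to S^{\infty}$ is a Kuratowski-type embedding into a unit sphere of radius $1$; the triangle inequality on $\Susp(X)$ then reduces to the triangle inequality on the unit sphere. Everything else is either a trigonometric identity or a standard continuity argument, so no further conceptual difficulty arises.
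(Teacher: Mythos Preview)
Your argument is correct and considerably more detailed than the paper's, which simply refers the reader to Bridson--Haefliger (Propositions~5.9 and~5.10(2)) without writing anything out. Your treatment of compactness in part~(1) and your explicit construction and uniqueness argument in part~(2) are clean and self-contained; in particular, the uniqueness step (forcing $u_r=\pi r$ from the distance to $\mathbf{0}$, then forcing $y_r=x$ from the distance to $[x,\pi s_0]$) is exactly the right computation.

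One caveat: your proposed ``workaround'' for the triangle inequality is not valid as stated. A Kuratowski-type embedding sends $X$ isometrically into $\ell^\infty(X)$ with the sup norm, not into a round sphere; for your map $[x,t]\mapsto \cos(t)\,e_0+\sin(t)\,\iota(x)$ to reproduce $d_{\Susp}$ via the ambient spherical distance you would need $\langle \iota(x),\iota(y)\rangle=\cos(d(x,y))$, i.e.\ an isometric embedding of $X$ into a unit sphere with the round metric, and an arbitrary compact metric space (even one with diameter $<\pi/2$) need not admit such an embedding. So this alternative route does not work in general. Your primary approach---deferring the triangle inequality to the Bridson--Haefliger spherical-join argument---is the correct one, and coincides with the paper's choice.
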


Recall that, since $X$ is compact, $\PP_p$ is compact, $p\geq 1$ (see, for example, \cite[Remark~7.1.9]{AGS08}). The next lemma characterises the measures that realise the diameter of $\PP_p(\Susp(X))$.

\begin{lemma}\label{lemma.diametersuspension}
Let $p>1.$ Then
\begin{enumerate}
    \item\label{lemma.diametersuspension.1} $\diam(\Susp(X))= \diam(\PP_p(\Susp(X)))= \pi,$
    \item\label{lemma.diametersuspension.2} $d_{\Susp}([x,t],[y,s])= \pi$ if and only if $\{[x,t],[y,s]\}=\{{\mathbf{0}},{\ppi}\}$.
    \item\label{lemma.diametersuspension.3} If $\mu, \nu \in \PP_p(\Susp(X))$, then $\W_p(\mu,\nu)= \pi$ if and only if $\{\mu,\nu\}=\{\delta_{\mathbf{0}}, \delta_{\ppi}\}$.
\end{enumerate}
\end{lemma}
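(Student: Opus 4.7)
The plan is to proceed in the order of the three assertions, as each builds on the previous one.

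For part \ref{lemma.diametersuspension.1}, I would first show $\diam(\Susp(X)) = \pi$. The upper bound is immediate from the fact that $d_{\Susp}$ is defined as the inverse cosine of a quantity in $[-1,1]$, and the lower bound follows from the direct computation $d_{\Susp}(\mathbf{0},\ppi) = \cos^{-1}(\cos(0)\cos(\pi)) = \cos^{-1}(-1) = \pi$. For the Wasserstein diameter, the isometric embedding $\Susp(X) \hookrightarrow \PP_p(\Susp(X))$ via Dirac measures gives $\diam(\PP_p(\Susp(X))) \geq \pi$, while the converse bound holds because any transport plan $\pi$ between $\mu,\nu$ satisfies $\int d_{\Susp}^p\, d\pi \leq \pi^p$, since $d_{\Susp} \leq \pi$ pointwise.

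For part \ref{lemma.diametersuspension.2}, I would write the condition $d_{\Susp}([x,t],[y,s]) = \pi$ as
\[
\cos(t)\cos(s) + \sin(t)\sin(s)\cos(d(x,y)) = -1.
\]
The key observation is that the standing diameter assumption $\diam(X) < \pi/2$ forces $\cos(d(x,y)) > 0$, and since $t,s \in [0,\pi]$ we have $\sin(t)\sin(s) \geq 0$. Hence the second summand on the left is non-negative, and the identity can hold only if $\cos(t)\cos(s) \leq -1$, i.e.\ $\cos(t)\cos(s) = -1$. This forces $\{t,s\} = \{0,\pi\}$, which under the equivalence relation defining $\Susp(X)$ is exactly $\{[x,t],[y,s]\} = \{\mathbf{0},\ppi\}$.

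For part \ref{lemma.diametersuspension.3}, I would take an optimal plan $\pi \in \Opt(\mu,\nu)$ realising $\W_p(\mu,\nu) = \pi$. Since the integrand $d_{\Susp}^p$ is bounded above by $\pi^p$ pointwise but averages to $\pi^p$ under $\pi$, we must have $d_{\Susp}(u,v) = \pi$ for $\pi$-a.e.\ $(u,v)$. Part \ref{lemma.diametersuspension.2} then confines $\supp(\pi) \subset \{(\mathbf{0},\ppi),(\ppi,\mathbf{0})\}$, so
\[
\pi = a\,\delta_{(\mathbf{0},\ppi)} + b\,\delta_{(\ppi,\mathbf{0})}, \qquad a+b=1,\ a,b \geq 0,
\]
giving $\mu = a\delta_{\mathbf{0}} + b\delta_{\ppi}$ and $\nu = b\delta_{\mathbf{0}} + a\delta_{\ppi}$. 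To conclude, I would exhibit an explicit competing transport plan which keeps $\min(a,b)$ mass fixed at $\mathbf{0}$ and at $\ppi$ (at zero cost) and transports the remaining $|a-b|$ units across the antipodes, producing cost $|a-b|\pi^p < \pi^p$ unless $\min(a,b) = 0$. This forces $a \in \{0,1\}$, giving $\{\mu,\nu\} = \{\delta_{\mathbf{0}},\delta_{\ppi}\}$.

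The only step that requires some care is the final exchange argument in part \ref{lemma.diametersuspension.3}: one must carefully verify that the strict inequality $|a-b|\pi^p < \pi^p$ actually contradicts optimality (which it does whenever $a \in (0,1)$), and that the competing plan is genuinely a transport plan between $\mu$ and $\nu$. The rest of the proof is essentially trigonometric and measure-theoretic bookkeeping that leans directly on the diameter condition $\diam(X) < \pi/2$ and the explicit form of $d_{\Susp}$.
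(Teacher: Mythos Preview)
Your proof is correct. Parts \ref{lemma.diametersuspension.1} and \ref{lemma.diametersuspension.2} match the paper's argument essentially line for line; your version of \ref{lemma.diametersuspension.2} is in fact more explicit than the paper's about where the standing hypothesis $\diam(X)<\pi/2$ enters.

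For part \ref{lemma.diametersuspension.3} you take a genuinely different route. You start from an optimal plan, deduce that its support lies in $\{(\mathbf{0},\ppi),(\ppi,\mathbf{0})\}$, and are then forced into an exchange argument to rule out the mixed case $\mu=a\delta_{\mathbf{0}}+b\delta_{\ppi}$ with $ab>0$. The paper instead plugs in the \emph{product} plan $\mu\otimes\nu$: since $\W_p^p(\mu,\nu)=\pi^p$ already saturates the upper bound, the chain
\[
\pi^p=\W_p^p(\mu,\nu)\le \int d_{\Susp}^p\, d(\mu\otimes\nu)\le \pi^p
\]
forces $d_{\Susp}=\pi$ on all of $\supp(\mu\otimes\nu)=\supp(\mu)\times\supp(\nu)$. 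Part \ref{lemma.diametersuspension.2} then shows every pair of points from the two supports is antipodal, which is impossible unless each support is a singleton. This bypasses your competing-plan step entirely. Your approach is perfectly sound and perhaps more in the spirit of standard optimal-transport arguments; the paper's trick with the product measure is shorter because the product support is a full rectangle, giving strictly more constraints than an arbitrary optimal coupling.
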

\begin{proof}
For (1), let $\mu,\nu\in \PP_p(\Susp(X)).$ Then $\mu\otimes\nu$ is a transport plan between $\mu$ and $\nu$. Therefore,
\[
\W_p^p(\mu,\nu) \leq \int d_{\Susp}^p([x,s],[y,t])\ d(\mu\otimes\nu)([x,s],[y,t]) \leq \pi^p.
\]
Taking $\mu=\delta_{\mathbf{0}}$ and $\nu=\delta_{{\ppi}}$ yields the equality.

Now we prove (2). Let $[x,s],[y,t]$ be points in $\Susp(X)$ with 
\[
d_{\Susp}([x,s],[y,t]) = \pi = \diam(\Susp(X)).
\]
Using the definition of the spherical suspension metric, we obtain that
\[
\cos(s)\cos(t)+\sin(s)\sin(t)\cos(d(x,y)) = -1,
\]
which implies that $\{s,t\}=\{0,\pi\}$. The converse implication is clear.

Finally, we prove (3). Take $\mu,\nu \in \PP_p(\Susp(X))$ such that $\W_p(\mu,\nu)=\pi.$ Observe that
\begin{equation}
\label{eq:inequalities.item.(3)}
\pi^{p} =\W_p^p(\mu,\nu)\leq \int d_{\Susp}^p([x,s],[y,t])\ d(\mu\otimes\nu)([x,s],[y,t])  \leq \pi^p.
\end{equation}
This implies that $\mu\otimes \nu$ is optimal and that all inequalities in \eqref{eq:inequalities.item.(3)} are equalities.  
In particular, $d_{\Susp}^p([x,s],[y,t])=\pi$ for any $([x,s],[y,t])\in \supp(\mu\otimes \nu)$. Therefore, by item (2),   $\{\mu,\nu\}=\{\delta_{\mathbf{0}},\delta_{\ppi}\}$. The converse implication is clear.
\end{proof}

We will use the following result in the spirit of \cite[Theorem 2.10]{ambrosio-gigli}. Recall that we denote by $e_t\colon \Geo(X)\to X$ the evaluation map $\gamma\mapsto \gamma_t$, for each $t\in [0,1]$ (see section \ref{s:preliminaries}).

\begin{lemma}
\label{teo.geodSusp}
Let $\mu_0,\mu_1\in \PP_p(\Susp(X))$ and assume that there exists $\pi\in\Opt(\mu_0,\mu_1)$ such that, for every $([x,t],[y,s])\in \supp(\pi)$, there exists a geodesic in $\Susp(X)$ joining $[x,t]$ and $[y,s]$. Then there exists a geodesic in $\PP_p(\Susp(X))$ joining $\mu_0$ with $\mu_1.$ Furthermore, the following assertions are equivalent:
\begin{enumerate}
    \item $t\mapsto \mu_t$ is a geodesic in $\PP_p(\Susp(X))$.
    \item There exists a measure $\eta\in\PP_p(\Geo(\Susp(X)))$ such that $(e_0,e_1)_{\#}\eta\in\Opt(\mu_0,\mu_1)$ and \[\mu_t=(e_t)_\#\eta.\]
\end{enumerate}
\end{lemma}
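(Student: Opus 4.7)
The plan is to adapt the standard representation theorem for Wasserstein geodesics (\cite[Theorem 2.10]{ambrosio-gigli}) to the present setting, in which $\Susp(X)$ is not assumed to be globally geodesic. The hypothesis on $\pi$ plays the role of a local geodesic-connectability condition, which is precisely what is needed to carry through the measurable selection argument at the heart of the standard proof.

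For the existence of a geodesic in $\PP_p(\Susp(X))$ and the implication (2)$\Rightarrow$(1), I would first observe that $\Susp(X)$ is compact by lemma~\ref{prop.propertiessuspension}, so the space $\Geo(\Susp(X))$ of constant-speed geodesics is also compact, being a closed, equi-Lipschitz subset of $C([0,1],\Susp(X))$. Consider the multi-valued map
\[
F([x,t],[y,s]) = \{\gamma \in \Geo(\Susp(X)) : e_0(\gamma)=[x,t],\ e_1(\gamma)=[y,s]\},
\]
which has closed non-empty values on $\supp(\pi)$ by the hypothesis. By the Kuratowski--Ryll-Nardzewski measurable selection theorem, there exists a Borel map $\Gamma\colon \supp(\pi) \to \Geo(\Susp(X))$ with $\Gamma([x,t],[y,s]) \in F([x,t],[y,s])$. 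Define $\eta = \Gamma_\# \pi$ and $\mu_t = (e_t)_\# \eta$. Compactness of $\Geo(\Susp(X))$ guarantees $\eta \in \PP_p(\Geo(\Susp(X)))$. Since each $\gamma \in \supp(\eta)$ has constant speed $d_{\Susp}(\gamma_0,\gamma_1)$ and $(e_0,e_1)_{\#}\eta = \pi$ is optimal, the plan $(e_s,e_t)_{\#}\eta$ yields
\[
\W_p^p(\mu_s,\mu_t) \leq \int d_{\Susp}(\gamma_s,\gamma_t)^p\, d\eta(\gamma) = |t-s|^p\, \W_p^p(\mu_0,\mu_1);
\]
the triangle inequality forces equality, so $t\mapsto \mu_t$ is a geodesic joining $\mu_0$ and $\mu_1$, and (2)$\Rightarrow$(1) is proved.

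For the converse (1)$\Rightarrow$(2), given a geodesic $(\mu_t)$ joining $\mu_0$ to $\mu_1$, I would recover a lifting $\eta$ via a dyadic gluing scheme. For each $n\geq 1$ and each $0\leq k < 2^n$, choose an optimal plan $\pi_{k,n}\in \Opt(\mu_{k/2^n},\mu_{(k+1)/2^n})$; since $(\mu_t)$ is a geodesic, the consecutive pieces assemble into a global transport of cost $\W_p^p(\mu_0,\mu_1)$, which together with the geodesic-joinability hypothesis on $\supp(\pi)$ propagates (via measurable selection) to each $\supp(\pi_{k,n})$. Disintegrating and composing these plans by the standard gluing lemma (\cite[Lemma 2.1]{ambrosio-gigli}) produces $\sigma_n \in \PP(\Susp(X)^{2^n+1})$ whose $j$-th marginal is $\mu_{j/2^n}$. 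Pushing $\sigma_n$ forward by piecewise-geodesic interpolation, constructed by applying Kuratowski--Ryll-Nardzewski on each dyadic subinterval, produces $\eta_n \in \PP(\Geo(\Susp(X)))$; tightness is automatic by compactness. Any weak subsequential limit $\eta$ satisfies $(e_t)_\#\eta = \mu_t$ at every dyadic $t$, hence for every $t\in [0,1]$ by continuity, while $(e_0,e_1)_\#\eta$ is optimal because its cost agrees with $\W_p^p(\mu_0,\mu_1)$.

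The principal obstacle lies in this converse direction: one must verify that the intermediate plans $\pi_{k,n}$ inherit the geodesic-connectability property, since only then is the piecewise interpolation available. The crucial observation is that the hypothesis on $\pi$ together with the geodesicity of $(\mu_t)$ constrains each $\pi_{k,n}$ to concentrate on pairs lying along genuine $\Susp(X)$-geodesics arising from $\supp(\pi)$; this compatibility is what permits the interpolation step. With this in place, the measurable selections at each dyadic level are uniform in $n$, and the compactness of $\Geo(\Susp(X))$ closes the limiting argument.
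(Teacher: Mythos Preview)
Your approach is the same route the paper takes: set up a Borel geodesic-selection map via a measurable selection theorem and then defer to the argument behind \cite[Theorem~2.10]{ambrosio-gigli}. The construction of $\eta$ from $\pi$, and the verification of existence and of (2)$\Rightarrow$(1), are correct and essentially identical to the paper's treatment.

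The gap is in (1)$\Rightarrow$(2). Your piecewise-geodesic interpolation of the glued measure $\sigma_n$ requires that $\sigma_n$-a.e.\ consecutive pair $(x_k,x_{k+1})$ be joinable by a geodesic in $\Susp(X)$, but $\Susp(X)$ is not assumed geodesic. You attempt to deduce this from the hypothesis on $\pi$, arguing that the glued plan $(p^0,p^{2^n})_\#\sigma_n$ is optimal and therefore inherits the joinability condition from $\supp(\pi)$. This step fails: the hypothesis guarantees joinability only for the \emph{specific} plan $\pi$, and the glued plan, though optimal, need not coincide with $\pi$ nor have support contained in $\supp(\pi)$; even if it did, that would say nothing about the intermediate marginals $\pi_{k,n}$ themselves. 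A correct route bypasses geodesic interpolation at finite stages altogether: the geodesic property of $(\mu_t)$ together with the equality case in Minkowski's inequality forces $d_{\Susp}(x_j,x_k)=|j-k|\,2^{-n}d_{\Susp}(x_0,x_{2^n})$ for $\sigma_n$-a.e.\ tuple, so after arranging consistency and passing to a projective limit in the compact product $\Susp(X)^D$ (with $D$ the dyadic rationals) one obtains, almost surely, a rescaled isometric embedding of $D$, which by completeness of $\Susp(X)$ extends uniquely to a constant-speed geodesic on $[0,1]$. Alternatively, since $p>1$, Lisini's lifting theorem for absolutely continuous curves in Wasserstein space furnishes $\eta$ directly, with no geodesic assumption on the base.
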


\begin{proof}
Let 
\[
E = \lbrace (([x,t],[y,s]),\gamma) : ([x,t],[y,s])\in\Susp(X)^2,\ \gamma\in \Geo(\Susp(X)),\ \gamma_0 = [x,t] \text{ and }  \gamma_1=[y,s] \rbrace.
\]
Observe that $E$ is a closed subset of $\Susp(X)^2\times \Geo(\Susp(X))$, where both $\Susp(X)^2$ and $\Geo(\Susp(X))$ are Polish spaces. Then using classical measurable selection principles (see, for example, \cite[Theorem 1]{A74}), we can find a Borel map $\GeodSel\colon E\rightarrow \Geo(\Susp(X))$ such that $\GeodSel([x,t],[y,s])$ is a geodesic.
Once we have this, the rest of the proof follows as that of \cite[Theorem 2.10]{ambrosio-gigli}.
\end{proof}

The next result characterises geodesics joining $\delta_{\mathbf{0}}$ with measures supported on $X\times \lbrace \pi/2\rbrace.$  

\begin{lemma}\label{lemma.uniquegeodesicfromvertex}
Let $\mu \in \PP_p(\Susp(X))$ with $\supp(\mu) \subset X\times \lbrace \pi/2\rbrace$. Then there exists a unique geodesic joining $\delta_{\mathbf{0}}$ with $\mu$, given by
\[
s \mapsto \mu_s= (L_{s})_\#\mu, \quad 0\leq s\leq 1,
\]
where $L_{s}([x,t])= [x,st]$.
\end{lemma}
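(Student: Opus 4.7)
The plan is to split the proof into two parts: verifying that the proposed curve $s \mapsto (L_s)_\#\mu$ is indeed a geodesic, and then showing that it is the only one.

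For the existence (and geodesic property), I would first observe that every point of $\supp(\mu)$ has the form $[x, \pi/2]$, and a direct computation from the definition of $d_{\Susp}$ gives $d_{\Susp}(\mathbf{0}, [x, \pi/2]) = \pi/2$. Consequently, every point in $\supp((L_s)_\#\mu) \subset X \times \{s\pi/2\}$ is at distance $s\pi/2$ from $\mathbf{0}$, so the only transport plan $\delta_{\mathbf{0}} \otimes (L_s)_\#\mu$ yields $\W_p(\delta_{\mathbf{0}}, (L_s)_\#\mu) = s\pi/2$. For intermediate distances, the push-forward plan $(L_s, L_t)_\#\mu$ witnesses
\[
\W_p\bigl((L_s)_\#\mu, (L_t)_\#\mu\bigr)^p \leq \int d_{\Susp}^p\bigl([x, s\pi/2], [x, t\pi/2]\bigr)\, d\mu = \bigl(|t-s|\pi/2\bigr)^p,
\]
and the triangle inequality together with the distances to $\delta_{\mathbf{0}}$ forces equality. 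Hence $s \mapsto (L_s)_\# \mu$ is a geodesic from $\delta_{\mathbf{0}}$ to $\mu$.

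For uniqueness, I would apply Lemma~\ref{teo.geodSusp} to the unique optimal plan $\pi = \delta_{\mathbf{0}} \otimes \mu$, which is concentrated on pairs $(\mathbf{0}, [x, \pi/2])$ with $\pi/2 \in (0, \pi)$; by Lemma~\ref{prop.propertiessuspension}(2), such pairs are joined by a (unique) geodesic in $\Susp(X)$. Thus any geodesic $(\mu_s)_{s\in[0,1]}$ from $\delta_{\mathbf{0}}$ to $\mu$ is of the form $\mu_s = (e_s)_\#\eta$ for some $\eta \in \PP_p(\Geo(\Susp(X)))$ with $(e_0, e_1)_\#\eta = \delta_{\mathbf{0}} \otimes \mu$. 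In particular, $\eta$-a.e.\ geodesic $\gamma$ starts at $\mathbf{0}$ and ends in $X \times \{\pi/2\}$.

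The key point is then to use the uniqueness clause of Lemma~\ref{prop.propertiessuspension}(2): for each $[x, \pi/2]$, the segment $s \mapsto [x, s\pi/2]$, which is precisely $s \mapsto L_s([x, \pi/2])$, is the only geodesic from $\mathbf{0}$ to $[x, \pi/2]$. Define the Borel map
\[
\Gamma \colon X \times \{\pi/2\} \to \Geo(\Susp(X)), \qquad \Gamma([x, \pi/2]) := \bigl( s \mapsto [x, s\pi/2] \bigr),
\]
which is a right inverse of $e_1$ on the relevant subset. Since $\eta$-a.e.\ $\gamma$ satisfies $\gamma = \Gamma(e_1(\gamma))$, we have $\eta = \Gamma_\#\mu$, and therefore
\[
\mu_s = (e_s)_\#\eta = (e_s \circ \Gamma)_\#\mu = (L_s)_\#\mu,
\]
giving the claimed uniqueness. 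The most delicate step is the measurable-selection / injectivity argument identifying $\eta$ with $\Gamma_\#\mu$; it is handled by combining the uniqueness of geodesics from $\mathbf{0}$ with the measurable selection built into Lemma~\ref{teo.geodSusp}.
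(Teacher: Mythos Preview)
Your proof is correct, and the overall architecture (existence via an explicit curve, uniqueness via the lifted measure $\eta$ from Lemma~\ref{teo.geodSusp}) matches the paper's. The existence argument is slightly more elementary than the paper's, which simply invokes Lemma~\ref{teo.geodSusp} with $\widetilde\mu=f_\#\mu$ for $f([x,t])(s)=L_s([x,t])$; your direct distance computation is a fine alternative.

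The uniqueness argument, however, follows a genuinely different route. You pin down the lift $\eta$ itself by using uniqueness of geodesics in $\Susp(X)$ from $\mathbf{0}$ to equatorial points, concluding $\eta=\Gamma_\#\mu$ and hence $\mu_s=(L_s)_\#\mu$. The paper instead works time-slice by time-slice: it first observes (via Lemma~\ref{teo.geodSusp}) that any intermediate $\nu_s$ is supported on $X\times\{s\pi/2\}$, then uses the sharp inequality $d_{\Susp}([x,s\pi/2],[y,\pi/2])\geq(1-s)\pi/2$ with equality iff $x=y$ to conclude that every optimal plan in $\Opt(\nu_s,\mu)$ is induced by the bijection $[x,s\pi/2]\mapsto[x,\pi/2]$, which forces $\nu_s=(L_s)_\#\mu$. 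Your approach is more conceptual (it isolates the geometric input as ``geodesics from the pole are unique'') but note that Lemma~\ref{prop.propertiessuspension}(2) as stated concerns geodesics from $\mathbf{0}$ to $\ppi$, so you should add one line explaining why this yields uniqueness of geodesics from $\mathbf{0}$ to $[x,\pi/2]$ (concatenate any such geodesic with $s\mapsto[x,s\pi]$ on $[1/2,1]$ and apply the lemma). The paper's approach trades this for a direct distance estimate, which has the mild advantage of not needing that concatenation step.
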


\begin{proof}
Let $\mu$ be a measure supported on $X\times\lbrace \pi/2\rbrace$, and $\widetilde{\mu}\in\PP(\Geo(\Susp(X)))$ given by $\widetilde{\mu}= f_\#\mu$, where the map $f\colon X\times \{\pi/2\}\to \Geo(\Susp(X))$ is defined by $f([x,t])(s)=L_s([x,t])$. A straightforward computation yields that 
\[
(L_s)_\#\mu = (e_s)_\#\widetilde{\mu},
\]
and, by lemma~\ref{teo.geodSusp}, $s\mapsto (L_s)_\#\mu$ is a geodesic between $\delta_{\mathbf{0}}$ and $\mu$. 

We now prove uniqueness. Let $(\nu_s)_{s\in [0,1]}$ and $(\eta_s)_{s\in [0,1]} $ be two geodesics joining $\delta_{\mathbf{0}}$ with $\mu$. By theorem~\ref{teo.geodSusp}, there exist measures $\widetilde{\nu},\widetilde{\eta}\in \PP_p(\Geo(\Susp(X)))$ such that  $e_{s\#}\widetilde{\nu}= \nu_s$ and  $e_{s\#}\widetilde{\eta}= \eta_s$  for all $0\leq s\leq 1$. In particular, $\nu_{s}$ and $\eta_{s}$ are supported on $X\times \lbrace s\pi/2\rbrace$, for any $s\in (0,1)$. Moreover, for any $x,y\in X$, we have 
\[
d_{\Susp}([x,s\pi/2],[y,\pi/2])\geq (1-s)\pi/2,
\]
with equality if and only if $x=y$. Since $\W_p(\nu_{s},\mu)=(1-s)\pi/2$, we have $([x,s\pi/2],[y,\pi/2])\in \supp(\pi_\nu)$ if and only if $x=y$, whenever $\pi_\nu \in \Opt(\nu_{s},\mu)$. This implies that $\pi_\nu$ is induced by a map $T^s_\nu$  given by $[x,s\pi/2]\mapsto[x,\pi/2]$ when restricted to $\supp(\nu_{s})$. An analogous argument yields the same result for optimal plans between $\eta_{s}$ and $\mu$. In particular, $T^s_{\nu\#}\nu_{s}=\mu=T^s_{\eta\#}\eta_{s}$, which implies that $\nu_{s}=\eta_{s}$.
\end{proof}

\subsection{Invariance of Dirac measures}
We now show that Dirac measures are invariant under the action of isometries of the Wasserstein space. We first consider the Dirac measures $\delta_{\mathbf{0}}$ and $\delta_{\pi}$.

\begin{lemma}\label{prop.invariancedeltazeropi}
Let $\Phi \in \Isom (\PP_p(\Susp(X))).$ Then $\Phi (\lbrace \delta_{\mathbf{0}}, \delta_{\ppi}\rbrace) = \lbrace \delta_{\mathbf{0}}, \delta_{\ppi}\rbrace.$     
\end{lemma}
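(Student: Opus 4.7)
The statement is an immediate consequence of Lemma \ref{lemma.diametersuspension}, so the plan is short.

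\textbf{Plan.} The key observation is part (\ref{lemma.diametersuspension.3}) of Lemma \ref{lemma.diametersuspension}: among all pairs of measures in $\PP_p(\Susp(X))$, only $\{\delta_{\mathbf{0}}, \delta_{\ppi}\}$ realises the diameter $\pi$. Since any isometry $\Phi$ of $\PP_p(\Susp(X))$ preserves Wasserstein distances, the pair $\{\Phi(\delta_{\mathbf{0}}), \Phi(\delta_{\ppi})\}$ must also have $\W_p$-distance equal to $\pi$. Applying Lemma \ref{lemma.diametersuspension}(\ref{lemma.diametersuspension.3}) to this image pair forces
\[
\{\Phi(\delta_{\mathbf{0}}), \Phi(\delta_{\ppi})\} = \{\delta_{\mathbf{0}}, \delta_{\ppi}\},
\]
which is exactly the claim.

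\textbf{Where is the work?} There is essentially no work left at this stage: the combinatorial/metric content (namely, that $\pi$ is the diameter of $\PP_p(\Susp(X))$ and is attained only by the poles of the suspension, via cyclical-monotonicity–type reasoning applied to $\mu \otimes \nu$) has been fully handled in Lemma \ref{lemma.diametersuspension}. The mild subtlety is that $\Phi$ a priori acts on a single measure at a time, so one must note that distance preservation forces the \emph{unordered} pair $\{\Phi(\delta_{\mathbf{0}}), \Phi(\delta_{\ppi})\}$ to coincide with $\{\delta_{\mathbf{0}}, \delta_{\ppi}\}$, without yet determining which pole maps to which. That ambiguity is harmless here and will presumably be resolved later, e.g.\ using additional symmetry-breaking data (such as the diameter assumption $\diam(X) < \pi/2$ and Condition~\ref{cond:general position}) when one analyses the action of $\Phi$ on $X \times \{\pi/2\}$ and on the geodesics produced by Lemma \ref{lemma.uniquegeodesicfromvertex}.

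\textbf{Main obstacle.} There is no obstacle: this lemma is purely a bookkeeping consequence of the diameter characterisation. The genuinely non-trivial step has already been carried out in the proof of Lemma~\ref{lemma.diametersuspension}(\ref{lemma.diametersuspension.3}), where the product plan $\mu \otimes \nu$ is shown to be optimal and forces the marginals to concentrate on the two cone points.
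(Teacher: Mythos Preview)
Your proposal is correct and matches the paper's proof essentially line for line: the paper also observes that $\W_p(\Phi(\delta_{\mathbf{0}}),\Phi(\delta_{\ppi}))=\W_p(\delta_{\mathbf{0}},\delta_{\ppi})=\pi$ and then invokes Lemma~\ref{lemma.diametersuspension}(\ref{lemma.diametersuspension.3}) to conclude. Your additional commentary about the unresolved pole-swap ambiguity is accurate and indeed addressed later in the paper.
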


\begin{proof}
    Let $\Phi \in \Isom (\PP_p(\Susp(X)))$ and notice that 
    \[\W_p^p (\Phi(\delta_{\mathbf{0}}), \Phi(\delta_{\ppi}))=\W_p^p (\delta_{\mathbf{0}},\delta_{\ppi})=\pi^p.\]
 Thus, by lemma \ref{lemma.diametersuspension}(3), we have $\{\Phi(\delta_{\mathbf{0}}),\Phi(\delta_{\ppi})\} = \lbrace \delta_{\mathbf{0}},\delta_{\ppi} \rbrace$.
\end{proof}

Using the preceding lemma, we show that the set of measures supported on $X\times \{\pi/2\}$ is invariant under isometries.

\begin{lemma}\label{prop.invarianceXtimespi2}
Let $\Phi\in \Isom(\PP_p(\Susp(X)))$. Then the following assertions hold:
\begin{enumerate}
\item $\Phi(\PP_p(X\times \lbrace \pi/2 \rbrace)) = \PP_p(X\times \lbrace \pi/2 \rbrace)$.
\item If $\Phi(\delta_{\textbf{0}})=\delta_{\textbf{0}}$, then $\Phi(\PP_p(X\times \lbrace t \rbrace)) = \PP_p(X\times \lbrace t \rbrace)$, for any $t\in[0,1]$. 
\item If $\iota\colon X\to X\times\{\pi/2\}\subset \Susp(X)$ is the isometric embedding $x\mapsto [x,\pi/2]$, then 
\[
\Phi |_{\PP_p(X\times \lbrace\pi/2\rbrace)} = \iota_\#\circ\widetilde{\Phi}\circ \iota^{-1}_\#
\]
for some $\widetilde{\Phi}\in \Isom(\PP_p(X))$.     
\end{enumerate}
\end{lemma}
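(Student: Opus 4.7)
The plan is to derive all three assertions from a single metric characterisation of the slices $\PP_p(X\times\{t\})$ in terms of Wasserstein distances to the two suspension poles $\delta_{\mathbf{0}}$ and $\delta_{\ppi}$.

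First I would compute, using the explicit formula for $d_{\Susp}$, that $d_{\Susp}([x,s],\mathbf{0})=s$ and $d_{\Susp}([x,s],\ppi)=\pi-s$, so that for any $\mu\in\PP_p(\Susp(X))$,
\[
\W_p(\mu,\delta_{\mathbf{0}})^p=\int s^p\,d\mu([x,s]),\qquad \W_p(\mu,\delta_{\ppi})^p=\int (\pi-s)^p\,d\mu([x,s]).
\]
Applying Minkowski's inequality in $L^p(\mu)$ to the non-negative functions $f(s)=s$ and $g(s)=\pi-s$, whose sum is the constant $\pi$, gives
\[
\W_p(\mu,\delta_{\mathbf{0}})+\W_p(\mu,\delta_{\ppi})\geq \pi,
\]
with equality (using $p>1$, hence strict convexity of $r\mapsto r^p$) if and only if $f$ and $g$ are proportional $\mu$-a.e., i.e.\ if and only if $\supp\mu\subset X\times\{t\}$ for some $t\in[0,\pi]$, in which case necessarily $t=\W_p(\mu,\delta_{\mathbf{0}})$. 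This yields the key metric characterisation: $\supp\mu\subset X\times\{t\}$ iff $\W_p(\mu,\delta_{\mathbf{0}})=t$ and $\W_p(\mu,\delta_{\ppi})=\pi-t$.

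Part (1) then follows at once: by Lemma \ref{prop.invariancedeltazeropi}, $\Phi$ permutes the unordered pair $\{\delta_{\mathbf{0}},\delta_{\ppi}\}$, and the condition $\W_p(\mu,\delta_{\mathbf{0}})=\W_p(\mu,\delta_{\ppi})=\pi/2$ characterising $\PP_p(X\times\{\pi/2\})$ is symmetric in the two poles, hence preserved by $\Phi$. For part (2), the assumption $\Phi(\delta_{\mathbf{0}})=\delta_{\mathbf{0}}$, combined with the same lemma and bijectivity, forces also $\Phi(\delta_{\ppi})=\delta_{\ppi}$; then both distances $\W_p(\cdot,\delta_{\mathbf{0}})$ and $\W_p(\cdot,\delta_{\ppi})$ are individually preserved, and the characterisation above gives $\Phi(\PP_p(X\times\{t\}))=\PP_p(X\times\{t\})$ for every $t$.

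For part (3), the hypothesis $\diam(X)<\pi/2$ ensures that for all $x,y\in X$,
\[
d_{\Susp}(\iota(x),\iota(y))=\cos^{-1}(\cos(d(x,y)))=d(x,y),
\]
so that $\iota\colon X\to X\times\{\pi/2\}$ is an isometry and hence $\iota_\#\colon \PP_p(X)\to\PP_p(X\times\{\pi/2\})$ is an isometric bijection. By part (1), $\Phi$ restricts to a self-isometry of $\PP_p(X\times\{\pi/2\})$, so the map $\widetilde\Phi:=\iota_\#^{-1}\circ\Phi|_{\PP_p(X\times\{\pi/2\})}\circ\iota_\#$ is an element of $\Isom(\PP_p(X))$ satisfying $\Phi|_{\PP_p(X\times\{\pi/2\})}=\iota_\#\circ\widetilde\Phi\circ\iota_\#^{-1}$, as required. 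The only substantive step is the equality case of Minkowski's inequality; everything else is bookkeeping and the careful handling of whether $\Phi$ might swap $\delta_{\mathbf{0}}$ and $\delta_{\ppi}$ in part (1).
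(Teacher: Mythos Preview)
Your argument is correct and takes a genuinely different route from the paper's.  The paper proves (1) by identifying $\PP_p(X\times\{\pi/2\})$ with the midpoint set $\Mid(\delta_{\mathbf{0}},\delta_{\ppi})$ (via Lemma~\ref{teo.geodSusp}), and then proves (2) by observing that every $\mu\in\PP_p(X\times\{t\})$ lies in the interior of the \emph{unique} geodesic from $\delta_{\mathbf{0}}$ to its rescaling $\widetilde\mu\in\PP_p(X\times\{\pi/2\})$ (invoking Lemma~\ref{lemma.uniquegeodesicfromvertex}), so the image $\Phi(\mu)$ must lie on the corresponding slice again.  Your approach replaces both of these geodesic arguments with a single inequality: the Minkowski equality case applied to the functions $s$ and $\pi-s$ gives a uniform metric characterisation of all slices $\PP_p(X\times\{t\})$ simultaneously, in terms only of the pair $\{\W_p(\mu,\delta_{\mathbf{0}}),\W_p(\mu,\delta_{\ppi})\}$.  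This is more elementary---it avoids the lifting to $\PP(\Geo(\Susp(X)))$ and the uniqueness statement of Lemma~\ref{lemma.uniquegeodesicfromvertex}---and it treats (1) and (2) in one stroke.  The paper's route, on the other hand, fits more naturally into its broader strategy of exploiting the geodesic structure of $\PP_p(\Susp(X))$, which it needs anyway for later results.  For part (3), the paper does not give an explicit argument; your treatment (that $\iota$ is an isometry because $\diam(X)<\pi/2$ keeps $d(x,y)$ in the range where $\cos^{-1}\circ\cos$ is the identity, and then conjugation by $\iota_\#$) is exactly the intended justification.
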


\begin{proof}
Lemma \ref{teo.geodSusp} implies that
\[
\Mid(\delta_{\mathbf{0}},\delta_{\ppi}) = \PP_p(X\times\lbrace \pi/2\rbrace).
\]
Combined with lemma~\ref{prop.invariancedeltazeropi}, we obtain item (1).

Now fix $0<t<\pi/2$. Consider $\mu \in \PP_p(X\times \lbrace t \rbrace)$ and define $\widetilde\mu = L_{\frac{\pi}{2t}\#}\mu\in \PP_p(X\times\{\pi/2\})$, where $L_{\frac{\pi}{2t}}$ is defined as in lemma~\ref{lemma.uniquegeodesicfromvertex}. 
Therefore, $\mu$ is in the interior of the unique geodesic joining $\delta_{\mathbf{0}}$ with $\widetilde{\mu}.$ 
As $\Phi(\widetilde{\mu})$ is still supported on $X\times \lbrace\pi/2\rbrace$ by the first part of the lemma, and $\Phi(\mu)$ is in the interior of the unique geodesic joining $\delta_{\mathbf{0}}$ with $\Phi(\widetilde\mu)$, we conclude that $\Phi(\mu)$ is supported on $X\times \lbrace t \rbrace$. An analogous argument yields the case $\pi/2<t<\pi$. The case $t\in\{0,\pi\}$ follows from lemma \ref{prop.invariancedeltazeropi}.
\end{proof}

\begin{lemma}
Let $\Phi\in \Isom(\PP_p(\Susp(X)))$. Then the set $\lbrace (1-\lambda)\delta_{\mathbf{0}}+\lambda\delta_{\ppi}:\lambda \in [0,1] \rbrace$ is invariant under $\Phi$. Moreover, either $\Phi$ fixes $\lbrace (1-\lambda)\delta_{\mathbf{0}}+\lambda\delta_{\ppi}:\lambda \in [0,1] \rbrace$ pointwise, or 
\[
\Phi((1-\lambda)\delta_{\mathbf{0}}+\lambda\delta_{\ppi})=\lambda\delta_{\mathbf{0}}+(1-\lambda)\delta_{\ppi}
\]
for all $\lambda\in [0,1]$. In particular, the measure $\frac{1}{2}\delta_{\mathbf{0}}+\frac{1}{2}\delta_{\ppi}$ is fixed by $\Phi$.   
\end{lemma}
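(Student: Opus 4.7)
My plan is to characterise the segment $\{(1-\lambda)\delta_{\mathbf{0}}+\lambda\delta_{\ppi}:\lambda\in[0,1]\}$ purely in terms of the pair of Wasserstein distances $(\W_p(\cdot,\delta_{\mathbf{0}}), \W_p(\cdot,\delta_{\ppi}))$, and then exploit the fact that $\Phi$ preserves the set $\{\delta_{\mathbf{0}},\delta_{\ppi}\}$ (by lemma~\ref{prop.invariancedeltazeropi}) to deduce the two cases.

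First, I would observe that for any $\nu\in\PP_p(\Susp(X))$, writing $T\colon\Susp(X)\to[0,\pi]$ for the projection $T([x,t])=t$ and $\nu'=T_\#\nu$, one has
\[
\W_p^p(\nu,\delta_{\mathbf{0}})=\int_0^\pi t^p\,d\nu'(t),\qquad \W_p^p(\nu,\delta_{\ppi})=\int_0^\pi (\pi-t)^p\,d\nu'(t),
\]
since $d_{\Susp}([x,t],\mathbf{0})=t$ and $d_{\Susp}([x,t],\ppi)=\pi-t$. Adding these gives $\W_p^p(\nu,\delta_{\mathbf{0}})+\W_p^p(\nu,\delta_{\ppi})=\int_0^\pi f(t)\,d\nu'(t)$ where $f(t)=t^p+(\pi-t)^p$. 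Now $f$ is strictly convex on $[0,\pi]$ (since $f''(t)=p(p-1)(t^{p-2}+(\pi-t)^{p-2})>0$ for $p>1$ on $(0,\pi)$), so its maximum on $[0,\pi]$ equals $\pi^p$ and is attained \emph{only} at the endpoints $t=0$ and $t=\pi$.

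The key step is then the characterisation: if $\mu_\lambda=(1-\lambda)\delta_{\mathbf{0}}+\lambda\delta_{\ppi}$, one computes $\W_p(\mu_\lambda,\delta_{\mathbf{0}})=\lambda^{1/p}\pi$ and $\W_p(\mu_\lambda,\delta_{\ppi})=(1-\lambda)^{1/p}\pi$. Conversely, suppose $\nu\in\PP_p(\Susp(X))$ satisfies $\W_p^p(\nu,\delta_{\mathbf{0}})=\lambda\pi^p$ and $\W_p^p(\nu,\delta_{\ppi})=(1-\lambda)\pi^p$. Then $\int_0^\pi f\,d\nu'=\pi^p$, and since $f\le\pi^p$ with equality only at $\{0,\pi\}$, the measure $\nu'$ is supported on $\{0,\pi\}$. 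The moment conditions then force $\nu'(\{\pi\})=\lambda$ and $\nu'(\{0\})=1-\lambda$. As $T^{-1}(0)=\{\mathbf{0}\}$ and $T^{-1}(\pi)=\{\ppi\}$, this gives $\nu=\mu_\lambda$.

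To finish, let $\Phi\in\Isom(\PP_p(\Susp(X)))$. By lemma~\ref{prop.invariancedeltazeropi}, either (i) $\Phi(\delta_{\mathbf{0}})=\delta_{\mathbf{0}}$ and $\Phi(\delta_{\ppi})=\delta_{\ppi}$, or (ii) $\Phi(\delta_{\mathbf{0}})=\delta_{\ppi}$ and $\Phi(\delta_{\ppi})=\delta_{\mathbf{0}}$. In case (i), $\Phi(\mu_\lambda)$ has the same pair of Wasserstein distances to $\delta_{\mathbf{0}}$ and $\delta_{\ppi}$ as $\mu_\lambda$, so by the characterisation $\Phi(\mu_\lambda)=\mu_\lambda$. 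In case (ii), the roles of the distances are swapped, so $\Phi(\mu_\lambda)=\mu_{1-\lambda}=\lambda\delta_{\mathbf{0}}+(1-\lambda)\delta_{\ppi}$. In both cases $\lambda=1/2$ gives $\Phi(\tfrac12\delta_{\mathbf{0}}+\tfrac12\delta_{\ppi})=\tfrac12\delta_{\mathbf{0}}+\tfrac12\delta_{\ppi}$. I expect the only subtlety to be the convexity argument identifying the maximum of $f$; everything else is a direct computation using that $d_{\Susp}([x,t],\mathbf{0})$ and $d_{\Susp}([x,t],\ppi)$ depend only on the second coordinate.
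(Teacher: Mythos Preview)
Your argument is correct and takes a genuinely different route from the paper. The paper characterises the segment $\{(1-\lambda)\delta_{\mathbf{0}}+\lambda\delta_{\ppi}\}$ as the set $\argmax(\W_p^p(\nu,\cdot))$ for any $\nu\in\PP_p(X\times\{\pi/2\})$: using $\diam(X)<\pi/2$ one checks that a point of $\Susp(X)$ is at distance $\pi/2$ from the equator if and only if it is a pole, and then invokes the previously established invariance of $\PP_p(X\times\{\pi/2\})$ (lemma~\ref{prop.invarianceXtimespi2}) to conclude invariance of the segment. You instead characterise $\mu_\lambda$ directly via the pair $(\W_p(\cdot,\delta_{\mathbf{0}}),\W_p(\cdot,\delta_{\ppi}))$, reducing to the elementary inequality $t^p+(\pi-t)^p\le\pi^p$ with equality only at the endpoints, and then appeal only to lemma~\ref{prop.invariancedeltazeropi}. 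Your approach is more self-contained (it does not need the invariance of the equator slice, nor the hypothesis $\diam(X)<\pi/2$ for this particular lemma), while the paper's approach avoids the convexity computation at the cost of relying on the earlier structural lemma. For the second part both proofs are essentially the same: compute $\W_p^p(\mu_\lambda,\delta_{\mathbf{0}})=\lambda\pi^p$ and split into the two cases coming from lemma~\ref{prop.invariancedeltazeropi}.
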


\begin{proof}
Let $\nu \in \PP_p(X\times \lbrace \pi/2 \rbrace)$. Observe that 
\[
\max\left\{\W_p^p(\nu,\mu):\mu\in\PP_p(\Susp(X))\right\} = (\pi/2)^p.
\]
Moreover,
\[\mu \in \argmax\left(\W_p^p(\nu,\cdot)\right)\]
if and only if $\supp(\mu) \subset \lbrace{\mathbf{0}}, {\ppi}\rbrace$. In other words, 
\[
\argmax\left(\W_p^p(\nu,\cdot)\right) = \{(1-\lambda)\delta_{\mathbf{0}}+\lambda \delta_{\ppi}:\lambda\in [0,1]\}.
\]
By lemma \ref{prop.invarianceXtimespi2}, we have  $\Phi (\nu)\in \PP_p(X\times \lbrace\pi/2\rbrace)$. Therefore, the set $\lbrace (1-\lambda)\delta_{\mathbf{0}}+\lambda\delta_{\ppi}: \lambda \in [0,1] \rbrace$ is invariant under $\Phi$.

Now, for the second part, fix $\lambda\in [0,1]$. Then
\begin{align*}
\lambda\pi^p &=
\W_p^p(\delta_{\mathbf{0}},(1-\lambda)\delta_{\mathbf{0}}+\lambda\delta_{\ppi})\\
&=
\W_p^p(\Phi(\delta_{\mathbf{0}}),\Phi((1-\lambda)\delta_{\mathbf{0}}+\lambda\delta_{\ppi}))\\
&= \W_p^p(\Phi(\delta_{\mathbf{0}}), (1-\lambda')\delta_{\mathbf{0}}+\lambda'\delta_{\ppi}).
\end{align*}
By lemma~\ref{prop.invariancedeltazeropi}, $\Phi(\delta_{\mathbf{0}})\in \lbrace\delta_{\mathbf{0}},\delta_{\ppi}\rbrace$. 
Assume first that $\Phi(\delta_{\mathbf{0}}) = \delta_{\mathbf{0}}$. Then $\lambda'=\lambda$. Suppose now that $\Phi(\delta_{\mathbf{0}})=\delta_{\ppi}$. In this case,  $\lambda'=1-\lambda$. Either way, $\lambda'=1/2$ when $\lambda =1/2$.
\end{proof}

The following lemma provides a metric characterisation of Dirac measures supported on the equator of the suspension.

\begin{lemma}\label{prop.characterizationDiracdeltaspi/2}
Let $\nu\in \PP_p(X\times\{\pi/2\})$. Then $\nu \in \Delta_1(X\times \lbrace \pi/2\rbrace)$ if and only if 
the set of midpoints $\Mid(\nu,(1-\lambda)\delta_{\mathbf{0}}+\lambda\delta_{\ppi})$ has exactly one element for any $\lambda\in(0,1)$.
\end{lemma}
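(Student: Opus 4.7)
The plan is to combine Lemma~\ref{teo.geodSusp}, which parametrizes geodesics in $\PP_p(\Susp(X))$ by measures on $\Geo(\Susp(X))$, with Lemma~\ref{prop.propertiessuspension}(2), which supplies a unique geodesic from each equator point to each of $\mathbf{0}$ and $\ppi$. Write $\mu_\lambda := (1-\lambda)\delta_{\mathbf{0}} + \lambda\delta_{\ppi}$. I first observe that any coupling $\pi$ between $\nu \in \PP_p(X\times\{\pi/2\})$ and $\mu_\lambda$ is supported in $(X\times\{\pi/2\}) \times \{\mathbf{0}, \ppi\}$, on which every pair of points has distance $\pi/2$. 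Consequently every coupling is optimal, and $\W_p(\nu, \mu_\lambda) = \pi/2$.

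For the forward direction, assume $\nu = \delta_{[x, \pi/2]}$. The Dirac marginal forces $\pi = \delta_{[x, \pi/2]} \otimes \mu_\lambda$ to be the unique coupling. To identify $\Mid(\nu, \mu_\lambda)$ exactly, I take any $\sigma \in \Mid(\nu, \mu_\lambda)$, pick $\pi_2 \in \Opt(\sigma, \mu_\lambda)$, and glue $\pi_2$ with $\delta_{[x, \pi/2]} \otimes \sigma$ (the forced optimal plan in $\Opt(\nu, \sigma)$) into a probability measure $\alpha$ on $\Susp(X)^3$ whose $(1, 3)$-marginal is $\pi$ and which satisfies $\int d^p(u, v)\, d\alpha = \int d^p(v, w)\, d\alpha = (\pi/4)^p$. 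Combining the pointwise triangle inequality in $\Susp(X)$ with Minkowski's inequality in $L^p(\alpha)$ yields equality in both, which, by the strict convexity of $L^p$ for $p > 1$, forces $d(u, v) = d(v, w) = \pi/4$ and $v$ to lie on the geodesic from $u$ to $w$ for $\alpha$-a.e.\ $(u, v, w)$. Lemma~\ref{prop.propertiessuspension}(2) then identifies $v$ uniquely as $[x, \pi/4]$ when $w = \mathbf{0}$ and $[x, 3\pi/4]$ when $w = \ppi$, giving $\sigma = (1-\lambda)\delta_{[x, \pi/4]} + \lambda\delta_{[x, 3\pi/4]}$.

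For the converse I argue by contrapositive: assume $\nu$ is not a Dirac measure, so there exist disjoint Borel sets $B_1, B_2$ with $\nu(B_1), \nu(B_2) > 0$ and $\nu(B_1) + \nu(B_2) = 1$. Set $\lambda := 1 - \nu(B_1) \in (0,1)$. I compare two optimal plans: the uniform plan $\pi_1 = \nu \otimes \mu_\lambda$ and the split plan $\pi_2 = (\nu\llcorner B_1) \otimes \delta_{\mathbf{0}} + (\nu\llcorner B_2) \otimes \delta_{\ppi}$. Lifting each via the unique meridian geodesics supplied by Lemma~\ref{prop.propertiessuspension}(2) and evaluating at $t = 1/2$ produces midpoints
\[
\sigma_1 = (1-\lambda)(P_{1/4})_\#\nu + \lambda(P_{3/4})_\#\nu, \qquad \sigma_2 = (P_{1/4})_\#(\nu\llcorner B_1) + (P_{3/4})_\#(\nu\llcorner B_2),
\]
where $P_{1/4}([x, \pi/2]) := [x, \pi/4]$ and $P_{3/4}([x, \pi/2]) := [x, 3\pi/4]$. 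Since $P_{1/4}$ and $P_{3/4}$ have disjoint images, $\sigma_1 = \sigma_2$ would force $(1-\lambda)\nu = \nu\llcorner B_1$, contradicting $\nu(B_2) > 0$.

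The main obstacle lies in the uniqueness argument in the forward direction, where one must rule out midpoints that do not arise directly from a geodesic in $\Susp(X)$. The gluing construction reduces this to the equality cases of Minkowski's inequality (for $p > 1$) and of the pointwise triangle inequality, which in combination with Lemma~\ref{prop.propertiessuspension}(2) pin down the middle mass on the unique meridian geodesics. The converse is then a direct and explicit construction.
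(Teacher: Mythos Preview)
Your argument is correct, and the converse direction is essentially identical to the paper's: you partition $\supp(\nu)$ into two pieces, choose $\lambda$ to match the masses, and exhibit two distinct midpoints by either sending each piece to its own pole or mixing uniformly. The paper writes the same two midpoints in slightly different notation and distinguishes them by comparing their mass on $L_{1/2}(E)$, which is exactly your observation that $(1-\lambda)\nu \neq \nu\llcorner B_1$ on $B_2$.

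The forward direction is where the two proofs differ. The paper simply asserts
\[
\Mid\bigl(\delta_{[x,\pi/2]},(1-\lambda)\delta_{\mathbf{0}}+\lambda\delta_{\ppi}\bigr)=\{(1-\lambda)\delta_{[x,\pi/4]}+\lambda\delta_{[x,3\pi/4]}\},
\]
whereas you supply a genuine proof via the gluing lemma and the equality case of Minkowski's inequality for $p>1$. This is a standard and clean route and fills in what the paper leaves implicit. One small point of exposition: the sentence ``forces \ldots\ $v$ to lie on the geodesic from $u$ to $w$'' is a slight shortcut, since $\Susp(X)$ need not be geodesic in general (the base $X$ is only assumed compact). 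What you actually obtain from the equality cases is $d(u,v)=d(v,w)=\pi/4$ with $u=[x,\pi/2]$ and $w\in\{\mathbf{0},\ppi\}$; the identification of $v$ then follows from a direct computation with $d_{\Susp}$: $d(v,\mathbf{0})=\pi/4$ forces $v=[y,\pi/4]$, and $d_{\Susp}([x,\pi/2],[y,\pi/4])=\cos^{-1}\bigl(\tfrac{\sqrt{2}}{2}\cos d(x,y)\bigr)=\pi/4$ forces $y=x$ (and analogously for $w=\ppi$). This is the content behind your appeal to Lemma~\ref{prop.propertiessuspension}(2), and with that computation made explicit the argument is complete.
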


\begin{proof}
First, consider the measure $\delta_{[x,\pi/2]}\in\Delta_1(X\times\{\pi/2\})$. Clearly,
\[
\Mid(\delta_{[x,\pi/2]},
    (1-\lambda)\delta_{\mathbf{0}}+\lambda\delta_{\ppi})
    = \lbrace 
    (1-\lambda)\delta_{[x,\pi/4]}+\lambda\delta_{[x,3\pi/4]}
    \rbrace
\]
and the ``only if'' implication follows. 

To prove the ``if'' implication, we will show that, if $\nu \in \PP_p(X\times \lbrace \pi/2 \rbrace)\setminus \Delta_1(X\times \lbrace \pi/2\rbrace)$, then $\Mid(\nu,(1-\lambda)\delta_{\mathbf{0}}+\lambda\delta_{\ppi})$ contains more than one element.

Let $E\subset X\times \lbrace \pi/2\rbrace$ be a Borel set such that $0<\nu (E)<1$. This set exists because, since $\nu$ is not a Dirac measure, $\supp(\nu)$ has at least two points, say $x$ and $y$. Then, we can take $\eps>0$ such that $B_\eps(x)\cap B_\eps(y) = \varnothing$, and define $E = B_{\eps/2}(x)\cap X\times\{\pi/2\}$. Let $F = X\times \{\pi/2\}\setminus E$, set $\lambda = \nu(F)$, and write $\nu$ as
\[
\nu = (1-\lambda)\frac{\nu\llcorner E}{\nu(E)}+\lambda \frac{\nu\llcorner F}{\nu(F)}.
\]
The midpoint measures
\[
\eta_0 \in \Mid\left(\delta_{\mathbf{0}},\frac{\nu\llcorner E}{\nu(E)}\right),\quad \eta_\pi \in \Mid\left(\frac{\nu\llcorner F}{\nu(F)},\delta_{\ppi}\right)
\]
 satisfy 
\[
\eta_0(L_{1/2}(E))=1, \quad \eta_{\pi}(L_{3/2}(F))=1,
\]
where the map $L_s\colon \Susp(X) \to \Susp(X)$ is given by $L_{s}([x,t])= [x,st]$, as in lemma \ref{lemma.uniquegeodesicfromvertex}. 

Computing the distances to $\nu$ and to 
$(1-\lambda)\delta_{\mathbf{0}}+\lambda\delta_{\ppi}$, 
we may verify that 
\[
(1-\lambda)\eta_0+\lambda\eta_{\pi}\in \Mid(\nu,(1-\lambda)\delta_{\mathbf{0}}+\lambda\delta_{\ppi}).
\]
Let us now show that there exists at least another midpoint between $\nu$ and 
$(1-\lambda)\delta_{\mathbf{0}}+\lambda\delta_{\ppi}$. 
Consider the measures
\[
\nu_0 \in \Mid(\delta_{\mathbf{0}},\nu),\quad \nu_\pi\in \Mid(\nu,\delta_{\ppi}).
\]
Computing the distances to $\nu$ and 
$(1-\lambda)\delta_{\mathbf{0}}+\lambda\delta_{\ppi}$, 
we obtain
\[(1-\lambda)\nu_0+\lambda\nu_{{\ppi}}\in \Mid(\nu,(1-\lambda)\delta_{\mathbf{0}}+\lambda\delta_{\ppi}). \]
To finish the argument, 
 notice that both measures $(1-\lambda)\eta_0+\lambda\eta_{\pi}$ and $(1-\lambda)\nu_0+\lambda\nu_{\pi}$ are in $I(\nu)$, and observe that
\[
\left((1-\lambda)\nu_0+\lambda\nu_{\pi}\right)(L_{1/2}(E))= (1-\lambda)^2<(1-\lambda)=\left((1-\lambda)\eta_0+\lambda\eta_{\pi}\right)(L_{1/2}(E)).
\]
Therefore, the midpoints we obtained are indeed different measures.
\end{proof}

The previous lemma allows us to prove that the set of Dirac measures is invariant.

\begin{proposition}\label{prop.invarianceDiracdeltas}
 If $\Phi \in \Isom(\PP_p(\Susp(X))),$ then $\Phi(\Delta_1(\Susp(X)))=\Delta_1(\Susp(X)). $   
\end{proposition}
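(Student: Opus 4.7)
The plan is to handle Dirac measures in three strata according to their latitude on the suspension, using the metric characterisations already established. Let $\Phi \in \Isom(\PP_p(\Susp(X)))$ throughout.

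The first stratum consists of the two poles $\{\delta_{\mathbf{0}},\delta_{\ppi}\}$, which are preserved as a set by $\Phi$ thanks to lemma~\ref{prop.invariancedeltazeropi}.

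For the second stratum, the equator $X\times\{\pi/2\}$, I would show $\Phi(\Delta_1(X\times\{\pi/2\})) = \Delta_1(X\times\{\pi/2\})$ via the characterisation in lemma~\ref{prop.characterizationDiracdeltaspi/2}. By lemma~\ref{prop.invarianceXtimespi2}(1), the ambient set $\PP_p(X\times\{\pi/2\})$ is $\Phi$-invariant. By the lemma immediately preceding lemma~\ref{prop.characterizationDiracdeltaspi/2}, $\Phi$ sends each $(1-\lambda)\delta_{\mathbf{0}}+\lambda\delta_{\ppi}$ to either itself or $\lambda\delta_{\mathbf{0}}+(1-\lambda)\delta_{\ppi}$; in either case, as $\lambda$ ranges over $(0,1)$, the images again range over $\{(1-\mu)\delta_{\mathbf{0}}+\mu\delta_{\ppi}:\mu\in(0,1)\}$. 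Since $\Phi$ bijectively preserves midpoint sets, the cardinality condition in lemma~\ref{prop.characterizationDiracdeltaspi/2} is preserved under $\Phi$, and therefore $\nu\in\Delta_1(X\times\{\pi/2\})$ if and only if $\Phi(\nu)\in\Delta_1(X\times\{\pi/2\})$.

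For the third stratum, fix $[x,t]\in\Susp(X)$ with $t\in(0,\pi/2)$. By lemma~\ref{lemma.uniquegeodesicfromvertex}, the unique geodesic in $\PP_p(\Susp(X))$ from $\delta_{\mathbf{0}}$ to $\delta_{[x,\pi/2]}$ is $s\mapsto\delta_{[x,s\pi/2]}$, which passes through $\delta_{[x,t]}$ at $s=2t/\pi$. Its image under $\Phi$ is the unique geodesic from $\Phi(\delta_{\mathbf{0}})$ to $\Phi(\delta_{[x,\pi/2]})$, both of which are Dirac measures by the previous two stages. If $\Phi(\delta_{\mathbf{0}})=\delta_{\mathbf{0}}$ and $\Phi(\delta_{[x,\pi/2]})=\delta_{[y,\pi/2]}$, lemma~\ref{lemma.uniquegeodesicfromvertex} identifies this image geodesic as $s\mapsto\delta_{[y,s\pi/2]}$, so $\Phi(\delta_{[x,t]})=\delta_{[y,t]}$; if instead $\Phi(\delta_{\mathbf{0}})=\delta_{\ppi}$, the symmetric analogue of lemma~\ref{lemma.uniquegeodesicfromvertex} with the poles swapped yields $\Phi(\delta_{[x,t]})=\delta_{[y,\pi-t]}$. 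The case $t\in(\pi/2,\pi)$ is handled analogously using the unique geodesic from $\delta_{\ppi}$ to the equatorial Dirac measure in the same line.

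I do not foresee a significant obstacle: the preparatory lemmas provide a perfect scaffolding, and the main work is simply the bookkeeping of matching Dirac measures along the geodesics issued from the poles. The only minor care needed is to record the symmetric version of lemma~\ref{lemma.uniquegeodesicfromvertex} for geodesics emanating from $\delta_{\ppi}$, which is immediate from the pole-swap symmetry of $\Susp(X)$.
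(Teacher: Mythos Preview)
Your proposal is correct and follows essentially the same three-step strategy as the paper: poles via lemma~\ref{prop.invariancedeltazeropi}, equator via the midpoint characterisation of lemma~\ref{prop.characterizationDiracdeltaspi/2}, and remaining latitudes by pushing meridian geodesics through $\Phi$. The only cosmetic difference is that the paper uses the full meridian geodesic $t\mapsto\delta_{[x,t\pi]}$ from $\delta_{\mathbf{0}}$ to $\delta_{\ppi}$ (handling all $s\notin\{0,\pi/2,\pi\}$ at once), whereas you use the half-meridians from each pole to the equator via lemma~\ref{lemma.uniquegeodesicfromvertex} and split into $t<\pi/2$ and $t>\pi/2$; both arguments are equivalent.
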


\begin{proof}
    First, consider the measure $\delta_{[x,\pi/2]}$ for some $x\in X$. By lemma \ref{prop.characterizationDiracdeltaspi/2}, the set of midpoints $\Mid\left(\delta_{[x,\pi/2]},(1-\lambda)\delta_{\mathbf{0}}+\lambda\delta_{\ppi}\right)$
    has exactly one element for any $\lambda \in [0,1]$. Fix $\Phi\in\Isom(\PP_p(\Susp(X)))$. Then
    \[
    \Phi\left(\Mid\left(\delta_{[x,\pi/2]},(1-\lambda)\delta_{\mathbf{0}}+\lambda\delta_{\ppi}\right)\right)=\Mid\left(\Phi(\delta_{[x,\pi/2]}),(1-\lambda')\delta_{\mathbf{0}}+\lambda'\delta_{\ppi}\right),
    \]
    where either $\lambda'=\lambda$ or $\lambda'=1-\lambda$.
    Hence, the set of midpoints 
    $\Mid(\Phi(\delta_{[x,\pi/2]}),(1-\lambda')\delta_{\mathbf{0}}+\lambda'\delta_{\ppi})= \lbrace \ast\rbrace$ also has exactly one element for any $\lambda'\in [0,1]$. Therefore, by lemma~\ref{prop.characterizationDiracdeltaspi/2}, $\Phi(\delta_{[x,\pi/2]})\in \Delta_1(X\times \lbrace\pi/2\rbrace)$.
    
Consider now $\delta_{[x,s]}$ with $s \not\in \{0,\pi/2,\pi\}$. Observe that there exists only one geodesic joining $\mathbf{0}$ and $\ppi$ passing through $[x,s]$, which implies there is only one geodesic joining $\delta_{\mathbf{0}}$ with $\delta_{\ppi}$ and passing through $\delta_{[x,s]}$ by lemma~\ref{teo.geodSusp}. This geodesic is given by
    \[
    \gamma\colon t \mapsto \delta_{[x,t\pi]}, \quad t \in [0,1].
    \]
    As $\Phi(\delta_{[x,\pi/2]})=\delta_{[y,\pi/2]}$ for some $y \in X$, the image of the geodesic $\gamma$ under $\Phi$  is the geodesic given by
    \[
    \Phi(\gamma)\colon t \mapsto \delta_{[y,t\pi]}, \quad t \in [0,1],
    \]
    which implies that $\Phi(\delta_{[x,s]})\in \Delta_1(\Susp(X))$.

Finally, for $\delta_{[x,s]}$ with $s\in\{0,\pi\}$, we have $\Phi(\delta_{[x,s]})\in\Delta_1(\Susp(X))$ by lemma~\ref{prop.invariancedeltazeropi}.
\end{proof}

\subsection{Invariance of measures on meridians}
Having proved that the set of Dirac measures on $\Susp(X)$ is invariant under the action of $\Isom(\PP_p(\Susp(X)))$, proving the isometric rigidity of $\Susp(X)$ reduces to verifying that the group 
\[
K = \lbrace \Phi \in \Isom (\PP_p(\Susp(X))) : \Phi (\delta_{[x,t]})= \delta_{[x,t]} \text{ for all } [x,t]\in \Susp(X) \rbrace.
\]
is trivial (see proposition~\ref{lem:characterisation of isometric rigidity}). In the rest of the section, we will show that this is the case, proving Theorem~\ref{t:rigidity of compact spaces}. 
First, we show that $K$ fixes measures supported on geodesics joining $\mathbf{0}$ and $\ppi$. We will refer to such geodesics as \textit{meridians}. To this end, we need the following lemmas. 

\begin{lemma}\label{lem.distanceproj}
Let $0\leq t \leq \pi.$ Then the map $T_t\colon \Susp(X)\setminus\lbrace {\mathbf{0}},{\ppi}\rbrace \rightarrow X\times \lbrace t \rbrace$ given by $T_t([x,s])=[x,t]$ is continuous and if we define it arbitrarily on $\lbrace {\mathbf{0}},{\ppi}\rbrace$, the resulting extension of $T_t$ to the whole suspension $\Susp(X)$ is a Borel map.
Moreover, $T_t$ satisfies
\[
\argmin(d_{\Susp}([x,s],\cdot)|_{X\times\{t\}}) = \{T_{t}([x,s])\}
\]
for all $[x,s]\in \Susp(X)$.
\end{lemma}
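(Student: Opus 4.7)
The plan is to exploit the explicit metric formula
\[
\cos(d_{\Susp}([x,s],[y,s'])) = \cos s \cos s' + \sin s \sin s' \cos(d(x,y))
\]
throughout, handling well-definedness, continuity, Borel measurability, and the minimization claim one at a time.

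First I would note that well-definedness on $\Susp(X)\setminus\{\mathbf{0},\ppi\}$ is immediate from the definition of the equivalence relation: two representatives $(x,s),(x',s')$ determine the same class only when $(x,s)=(x',s')$ or $s=s'\in\{0,\pi\}$, so every point outside the poles has a unique representative in $X\times(0,\pi)$. For the continuity of $T_t$ on $\Susp(X)\setminus\{\mathbf{0},\ppi\}$, I would show that the subspace topology agrees with the product topology on $X\times(0,\pi)$. Suppose $[x_n,s_n]\to[x_0,s_0]$ with $s_0\in(0,\pi)$. Then $\cos(d_{\Susp})\to 1$, and from the bound
\[
\cos s_n\cos s_0 + \sin s_n\sin s_0\cos(d(x_n,x_0)) \leq \cos s_n\cos s_0 + \sin s_n\sin s_0 = \cos(s_n-s_0),
\]
I obtain $s_n\to s_0$. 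Once $\sin s_n\sin s_0$ is bounded below by a positive constant I can solve the metric identity for $\cos(d(x_n,x_0))$ and conclude $d(x_n,x_0)\to 0$. Since $d_{\Susp}([x_n,t],[x_0,t])$ depends continuously on $d(x_n,x_0)$ via the same formula, this yields continuity of $T_t$.

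For the Borel property of any extension, the key observation is that $\{\mathbf{0},\ppi\}$ is closed (it is a finite set in a metric space), its complement is open, and the extended map coincides with the continuous map $T_t$ on this open set and takes at most two chosen values on the closed two-point set, so it is a Borel map.

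Finally, the minimization claim reduces to a single computation. For $[x,s]\in\Susp(X)\setminus\{\mathbf{0},\ppi\}$ and any $[y,t]\in X\times\{t\}$, if $t\in(0,\pi)$ then $\sin s\sin t>0$, so the function $y\mapsto\cos(d_{\Susp}([x,s],[y,t]))=\cos s\cos t+\sin s\sin t\cos(d(x,y))$ is strictly increasing in $\cos(d(x,y))$; since $d_{\Susp}$ takes values in $[0,\pi]$ where $\cos$ is strictly decreasing, it is minimized exactly when $d(x,y)=0$, i.e.\ when $y=x$ and $[y,t]=T_t([x,s])$. The boundary cases $t\in\{0,\pi\}$ are trivial since $X\times\{t\}$ is then a single point. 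I expect no serious technical obstacle; the only delicate point is that the statement as written ``for all $[x,s]\in\Susp(X)$'' fails at $[x,s]\in\{\mathbf{0},\ppi\}$ when $t\in(0,\pi)$, since in that case $d_{\Susp}(\mathbf{0},[y,t])=t$ is independent of $y$ and the argmin is all of $X\times\{t\}$; I would either flag this as a trivial edge case or implicitly restrict to $[x,s]$ outside the poles, which matches the context in which $T_t$ is used.
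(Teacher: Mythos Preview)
Your proposal is correct and follows essentially the same route as the paper: the paper also argues that the quotient map $X\times(0,\pi)\to\Susp(X)\setminus\{\mathbf{0},\ppi\}$ is a homeomorphism (which you establish directly via the metric formula), treats the Borel extension by noting the two-point set is closed, and proves the minimization via the inequality $d_{\Susp}([x,s],[y,t])\geq |s-t|$ with equality iff $x=y$, which is the same computation you carry out. Your observation about the edge case at the poles when $t\in(0,\pi)$ is well taken; the paper's statement and proof do not address it either, and in the subsequent applications only the behaviour away from the poles (or after integrating against measures giving the poles zero mass) is actually used.
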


\begin{proof}
The continuity of $T_t$ follows from the fact that the quotient map $X\times (0,\pi)\to \Susp(X)\setminus\{\mathbf{0},\ppi\}$ is a homeomorphism, and the map $T_t$ is given by the projection $(x,s)\mapsto (x,t)$ under this homeomorphism whenever $t\in(0,\pi)$. On the other hand, when $t \in \{0,\pi\}$, the map $T_t$ is  constant.

To verify the second assertion, let $\widetilde{T}_t$ be an extension of $T_t$ to all of $\Susp(X).$ Take a Borel subset $A\subset X\times \lbrace t\rbrace$ and observe that $\widetilde{T}^{-1}_{t}(A)$ consists of $T^{-1}_t(A)$ with possibly the union with $\lbrace {\mathbf{0}}\rbrace, \lbrace {\ppi}\rbrace$ or $\lbrace {\mathbf{0}},{\ppi}\rbrace$ depending on whether $\widetilde{T}_t({\mathbf{0}})$ or$\widetilde{T}_t({\ppi})$ belong to $A$. Since $T_t$ is continuous, $T^{-1}_t(A)$ is a Borel set and, as the sets $\lbrace {\mathbf{0}}\rbrace, \lbrace {\ppi}\rbrace$ and $\lbrace {\mathbf{0}},{\ppi}\rbrace$ are closed, $\widetilde{T}^{-1}_t(A)$ is a Borel set.

Finally, if $[y,t]\in X\times\{t\}$ then 
\[
d_{\Susp}([x,s],[y,t]) \geq |s-t| = d_{\Susp}([x,s],T_t([x,s])) 
\]
with equality if and only if $x=y$, and the last claim follows.
\end{proof}

\begin{corollary}\label{cor:non-branching.geodesics.starting.at.pole}
Geodesics starting at $\mathbf{0}$ or $\ppi$ are non-branching. In other words, if $\gamma$ and $\gamma'$ are geodesics in $\Susp(X)$ such that $\gamma_0 = \gamma'_0 = \mathbf{0}$ and $\gamma_t = \gamma'_t$ for some $t\in (0,1)$, then $\gamma_1=\gamma'_1$. 
\end{corollary}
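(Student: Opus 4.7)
The plan is to reduce the statement to the uniqueness of meridians given by lemma~\ref{prop.propertiessuspension}(2). First, I will invoke the isometry $[x,t] \mapsto [x, \pi - t]$ of $\Susp(X)$, which swaps $\mathbf{0}$ and $\ppi$, to reduce to the case of geodesics starting at $\mathbf{0}$.

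Next, given constant-speed geodesics $\gamma, \gamma' \colon [0, 1] \to \Susp(X)$ with $\gamma_0 = \gamma'_0 = \mathbf{0}$ and $\gamma_t = \gamma'_t = [x, s]$ for some $t \in (0, 1)$, the constant-speed property yields $s = t\cdot L(\gamma) = t\cdot L(\gamma')$, so both geodesics share a common length $r := s/t$. Since $d_{\Susp}(\mathbf{0}, [z, u]) = u$ for any $u \in [0, \pi]$, both endpoints must lie in $X \times \{r\}$; writing $\gamma_1 = [y, r]$ and $\gamma'_1 = [y', r]$, the goal reduces to showing $[y, r] = [y', r]$. The cases $r = 0$ and $r = \pi$ are handled directly: if $r = 0$ then both geodesics are constant at $\mathbf{0}$, while if $r = \pi$ then lemma~\ref{lemma.diametersuspension}(2) forces $\gamma_1 = \gamma'_1 = \ppi$.

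For the generic case $0 < r < \pi$, so that $s \in (0, \pi)$, the plan is to extend the arc-length reparametrization of $\gamma$ on $[0, r]$ by the meridian segment $u \in [r, \pi] \mapsto [y, u]$ to obtain a continuous path $\hat\gamma \colon [0, \pi] \to \Susp(X)$. Using that $d_{\Susp}(\mathbf{0}, \hat\gamma_u) = u$ on each piece, a short triangle-inequality argument (pinching against $\hat\gamma_r = [y, r]$) shows $d_{\Susp}(\hat\gamma_u, \hat\gamma_v) = |u - v|$ for all $u, v \in [0, \pi]$, so $\hat\gamma$ is a geodesic from $\mathbf{0}$ to $\ppi$ through $[x, s]$. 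The analogous construction for $\gamma'$ produces a second such geodesic $\hat\gamma'$, and lemma~\ref{prop.propertiessuspension}(2) forces $\hat\gamma = \hat\gamma'$; evaluating at $u = r$ yields the required $[y, r] = [y', r]$.

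The main (minor) obstacle is verifying that $\hat\gamma$ is genuinely a geodesic on mixed pairs $u \in [0, r]$, $v \in [r, \pi]$, but this reduces to the pinching inequality $v - u \leq d_{\Susp}(\hat\gamma_u, \hat\gamma_v) \leq (r - u) + (v - r) = v - u$, where the lower bound comes from the reverse triangle inequality $|d_{\Susp}(\mathbf{0}, \hat\gamma_v) - d_{\Susp}(\mathbf{0}, \hat\gamma_u)| = v - u$ and the upper bound from concatenation through $\hat\gamma_r$.
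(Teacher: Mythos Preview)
Your argument is correct, but it follows a different path from the paper's. The paper argues directly via lemma~\ref{lem.distanceproj}: once one observes that $\gamma_1,\gamma'_1\in X\times\{s/t\}$ and that $d_{\Susp}(\gamma_t,\gamma_1)=(1-t)\,s/t=|s-s/t|$ realises the minimum distance from $\gamma_t=[x,s]$ to that slice, uniqueness of this minimiser (namely $T_{s/t}([x,s])=[x,s/t]$) forces $\gamma_1=\gamma'_1$. No extension or concatenation is needed.

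Your approach instead extends each geodesic to a full meridian and invokes lemma~\ref{prop.propertiessuspension}(2) (uniqueness of the meridian through a given interior point). This is slightly longer, since you must verify that the concatenation $\hat\gamma$ is genuinely a geodesic, but your pinching argument for the mixed segment handles this cleanly. The trade-off: the paper's route is shorter and exploits the slice-projection lemma that is needed anyway for later results, whereas your route is more self-contained, relying only on the elementary structural fact about meridians and avoiding the explicit computation in lemma~\ref{lem.distanceproj}.
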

\begin{proof}
If $\gamma_t=\gamma'_t = \mathbf{0}$ for some $t\in (0,1)$ then $\gamma$ and $\gamma'$ are the constant geodesic identical to $\mathbf{0}$. On the other hand, if $\gamma_t = \gamma'_t \in X\times \{s\}$ for some $t\in (0,1)$ and $s>0$, then $\gamma_1,\gamma'_1\in X\times \{s/t\}$. By lemma~\ref{lem.distanceproj}, we have 
\[
\{\gamma_1\} = \argmin(\W_p(\gamma_t,\cdot)|_{\PP_p(X\times \{s/t\})}) = \argmin(\W_p(\gamma'_t,\cdot)|_{\PP_p(X\times \{s/t\})}) = \{\gamma'_1\},
\]
and the claim follows.
\end{proof}

\begin{lemma}\label{lem.projectionintospheres}
Let $0<t<\pi$ and fix a measure $\mu \in \PP_p(\Susp(X))$.
\begin{enumerate}
\item If $\mu(\lbrace {\mathbf{0}},{\ppi} \rbrace)=1$, then 
\[
\W_p(\mu,\nu)=\W_p(\mu,\PP_p(X\times\lbrace t\rbrace)) 
\]
for all $\nu \in \PP_p(X\times \lbrace t\rbrace)$.
\item  For any extension of $T_t\colon \Susp(X)\setminus\{\mathbf{0},\ppi\}\to X\times \{t\}$ as in lemma \ref{lem.distanceproj}, which we still denote by $T_t$ for ease of notation, we have
\[
\W_p(\mu,T_{t\#}\mu)= \W_p(\mu,\PP_p(X\times \lbrace t\rbrace)).
\]
\item The measure $\mu$ is supported on the image of a geodesic $\gamma$ with $\gamma_0={\mathbf{0}}, \gamma_1={\ppi}$ and $\gamma_t = [x,t]$ if and only if \[\W_p(\mu,\delta_{[x,t]})=\W_p(\mu,\PP_p(X\times\lbrace t\rbrace)).\]
\end{enumerate}
\end{lemma}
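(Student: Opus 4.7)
The plan is to prove the three assertions in order, each building on the previous.

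For (1), I would observe that every point of $X\times\{t\}$ lies at distance exactly $t$ from $\mathbf{0}$ and exactly $\pi-t$ from $\ppi$. Writing $\mu = \alpha\delta_{\mathbf{0}} + (1-\alpha)\delta_{\ppi}$, this forces every transport plan between $\mu$ and an arbitrary $\nu\in\PP_p(X\times\{t\})$ to have the same cost $\alpha t^p + (1-\alpha)(\pi-t)^p$, so $\W_p(\mu,\nu)$ is independent of $\nu$.

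For (2), the strategy is to use lemma \ref{lem.distanceproj} as a pointwise lower bound inside the cost integral. For any $\nu\in\PP_p(X\times\{t\})$ and any $\pi\in\Opt(\mu,\nu)$, on points $([y,s],[z,t])\in\supp(\pi)$ with $[y,s]\notin\{\mathbf{0},\ppi\}$ lemma \ref{lem.distanceproj} gives $d_{\Susp}([y,s],[z,t])\geq d_{\Susp}([y,s],T_t([y,s]))$; on the polar points this inequality is actually an equality (and independent of the extension of $T_t$) by the observation used in (1). Integrating yields
\[
\W_p^p(\mu,\nu) \geq \int_{\Susp(X)} d_{\Susp}^p([y,s],T_t([y,s]))\, d\mu([y,s]).
\]
The reverse bound is realised by the transport plan $(\id,T_t)_\#\mu$ between $\mu$ and $T_{t\#}\mu$, whose cost is exactly this integral. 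Hence $T_{t\#}\mu$ achieves the infimum, and moreover $\W_p^p(\mu,T_{t\#}\mu)$ equals that integral.

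For (3), the forward direction follows by combining the previous step with the geometry of a meridian: if $\supp(\mu)$ lies on the unique meridian through $[x,t]$ (lemma \ref{prop.propertiessuspension}(2)), then every non-polar point of $\supp(\mu)$ has the form $[x,r]$ and hence satisfies $T_t([x,r])=[x,t]$, while at the two poles the distance to any point of $X\times\{t\}$ is constant. Thus $\int d_{\Susp}^p([y,s],[x,t])\,d\mu = \int d_{\Susp}^p([y,s],T_t([y,s]))\,d\mu$, and since the unique plan between $\mu$ and $\delta_{[x,t]}$ gives $\W_p^p(\mu,\delta_{[x,t]})$ as the left-hand integral, part (2) yields the desired equality. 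For the converse, assume $\W_p(\mu,\delta_{[x,t]})=\W_p(\mu,\PP_p(X\times\{t\}))$. Equating the two expressions and using that the pointwise inequality $d_{\Susp}^p([y,s],[x,t])\geq d_{\Susp}^p([y,s],T_t([y,s]))$ (again from lemma \ref{lem.distanceproj}) is strict whenever $[y,s]$ is non-polar and $T_t([y,s])\neq [x,t]$, I conclude $T_t([y,s])=[x,t]$ for $\mu$-a.e.\ non-polar $[y,s]$. Therefore $\supp(\mu)\subset T_t^{-1}([x,t])\cup\{\mathbf{0},\ppi\}$, which is precisely the image of the meridian through $[x,t]$.

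The main subtlety I anticipate lies in the bookkeeping at the two poles: $T_t$ is only defined on them by an arbitrary extension, and the distance function is degenerate there. The resolution is that, for polar source points, the relevant distances to $X\times\{t\}$ are constant, so neither the inequality in (2) nor the equality case in (3) depends on the extension — a point worth spelling out explicitly when writing up the final proof.
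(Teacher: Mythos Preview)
Your proposal is correct and follows essentially the same approach as the paper. The only cosmetic difference is in (1): you note directly that every transport plan between $\mu$ and $\nu$ has the same cost (since all distances from $\mathbf{0}$, resp.\ $\ppi$, to $X\times\{t\}$ are constant), whereas the paper phrases this as showing $\mu\otimes\nu$ is cyclically monotone and then computes its cost; both amount to the same observation.
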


\begin{proof}
We first prove assertion (1). Observe that $\mu=(1-\lambda)\delta_{\mathbf{0}}+\lambda\delta_{\ppi}$ for some $\lambda \in [0,1].$ Fix $\nu\in \PP_p(X\times\lbrace t\rbrace)$. We will show that the plan  $\mu\otimes \nu$ is optimal. Recall that $d_{\Susp}({\mathbf{0}},[x,t])=t$ and that $d_{\Susp}({\ppi},[x,t])=\pi-t$ for any $x\in X$, which implies that the support of $\mu\otimes\nu$ is cyclically monotone. Therefore, 
\begin{align*}
\W^{p}_p(\mu,\nu) &= \int d^{p}_{\Susp(X)}([y,s],[x,t])\ d(\mu\otimes\nu)([y,s],[x,t])\\
&= (1-\lambda)t^p+\lambda(\pi-t)^p.
\end{align*}
It follows that $\W_p(\mu,\nu)$ is constant with respect to $\nu\in \PP_p(X\times \{t\})$, proving (1).

Now, we prove assertion (2). Consider an arbitrary $\nu\in \PP_p(X\times\{t\})$ and let $\pi\in \Opt(\mu,\nu)$. Then
\begin{align*}
    \W_p^p(\mu,\nu) &= \int d^p_{\Susp}([y,s],[x,t])\ d\pi([y,s],[x,t])\\
    &\geq \int |s-t|^p\ d\pi([y,s],[x,t])\\
    &= \int d^p_{\Susp}([y,s],T_t[y,s])\ d\mu([y,s])\\
    &\geq \W_p^p(\mu,T_{t\#}\mu). 
\end{align*}
Since $T_t$ is a Borel map, $T_{t\#}\mu\in\PP_p(X\times\{t\})$, which proves (2). Observe that, setting $\nu = T_{t\#}\mu$ in the preceding inequalities, we may conclude that $T_t$ is an optimal map between $\mu$ and $T_{t\#}\mu$.

Finally, we prove assertion (3). We first verify the ``if'' implication. Let $x\in X$ be such that $\W_p(\mu,\delta_{[x,t]})=\W_p(\mu,\PP_p(X\times\lbrace t\rbrace))$. Suppose first that $\mu(\{\mathbf{0},\ppi\})=1$. Then $\mu = (1-\lambda)\delta_{\mathbf{0}}+\lambda\delta_{\ppi}$ for some $\lambda \in [0,1]$, and the conclusion follows. Assume now that $0<\mu(\Susp(X)\setminus \lbrace {\mathbf{0}},{\ppi} \rbrace)\leq 1$. Then, by item (2), 
\[
\W_p^p(\mu,\delta_{[x,t]})=\W_p^p\left(\mu,T_{t\#}\mu\right).
\]
Therefore,
\begin{align*}
\int d_{\Susp}^p([y,s],[x,t])d\mu([y,s])&=\int d_{\Susp}^p([y,s],T_t([y,s]))\ d\mu([y,s])\\
&= \int d_{\Susp}^p([y,s],[y,t])\ d\mu([y,s]),
\end{align*}
which implies that 
\[
\int d_{\Susp}^p([y,s],[x,t])-d_{\Susp}^p([y,s],[y,t])\ d\mu([y,s])=0.
\]
Since $d_{\Susp}^p([y,s],[x,t])-d_{\Susp}^p([y,s],[y,t])\geq 0$ for all $[y,s]$, we have 
\[
d_{\Susp}^p([y,s],[y,t])=d_{\Susp}^p([y,s],[x,t])\quad 
\mu\text{-a.e.}
\]
Hence, $[y,t]=[x,t]$ for $\mu$-a.e.\ $[y,s]\in \Susp(X)\setminus\{{\mathbf{0}},{\ppi}\}$. Therefore, all the points in $\supp(\mu)$ are of the form $[x,s]$ and the conclusion follows.

The ``only if'' implication follows by applying item (2) with the extension of $T_t$ given by $T_t(\mathbf{0}) = T_t(\ppi) = [x,t]$. 
\end{proof}

The following proposition shows that measures supported on meridians are fixed by isometries $K$.

\begin{proposition}
\label{thm.fixinggeodesics}
 Let $\gamma \in \Geo(\Susp(X))$ with $\gamma_0={\mathbf{0}},$ $\gamma_1 = {\ppi}$  
 and fix an isometry $\Phi \in K$. Then $\Phi(\mu) = \mu$ for all $\mu \in \PP_p(\gamma)$, i.e.\ $\supp(\mu) \subset \gamma([0,1])$.
\end{proposition}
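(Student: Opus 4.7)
The strategy is to reduce the problem to an isometric rigidity question for $\PP_p$ of a compact interval. Let $x \in X$ be the unique point such that $\gamma_s = [x, s\pi]$ for all $s \in [0,1]$. The proof proceeds in two steps: first showing that $\Phi$ preserves $\PP_p(\gamma)$, then showing that the induced isometry of $\PP_p(\gamma) \cong \PP_p([0,\pi])$ fixing all Dirac measures must be the identity.

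For the first step, fix any $t \in (0, \pi)$. By lemma~\ref{lem.projectionintospheres}(3), a measure $\mu \in \PP_p(\Susp(X))$ is supported on the image of a meridian through $[x, t]$ if and only if
\[
\W_p(\mu, \delta_{[x,t]}) = \W_p(\mu, \PP_p(X \times \{t\})).
\]
Since $\Phi \in K$ fixes $\delta_{[x,t]}$, fixes $\delta_{\mathbf{0}}$, and hence preserves $\PP_p(X \times \{t\})$ by lemma~\ref{prop.invarianceXtimespi2}(2), this characterization is preserved under $\Phi$. Thus $\Phi(\mu)$ is supported on some meridian through $[x, t]$; by corollary~\ref{cor:non-branching.geodesics.starting.at.pole}, this meridian coincides with $\gamma$, so $\Phi(\mu) \in \PP_p(\gamma)$.

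For the second step, the arc-length parametrization $\alpha \colon [0, \pi] \to \gamma$ given by $\alpha(t) = [x, t]$ is an isometric embedding and induces an isometry $\alpha_\# \colon \PP_p([0, \pi]) \to \PP_p(\gamma)$. Then $\tilde{\Phi} := \alpha_\#^{-1} \circ \Phi|_{\PP_p(\gamma)} \circ \alpha_\#$ is an isometry of $\PP_p([0,\pi])$ fixing every Dirac measure, and it suffices to show $\tilde{\Phi} = \id$. I would adapt the arguments from the proof of theorem~\ref{t:rigidity of rays}: for $p > 1$ with $p \neq 2$, apply Mankiewicz's theorem to the isometric embedding $\mu \mapsto G_\mu^{-1}$ of $\PP_p([0,\pi])$ into the strictly convex space $L^p((0,1))$ to extend $\tilde{\Phi}$ to an affine isometry fixing all constant functions, then use the description of linear $L^p$-isometries together with the fact that $\tilde{\Phi}$ preserves the set of non-decreasing functions to force the extension, and hence $\tilde{\Phi}$, to be the identity; for $p = 2$, use a parametrization of two-atom measures analogous to $\mu(x,\sigma,t)$ from that theorem, adapted to the bounded interval, to pin down $\tilde{\Phi}$ on $\Delta_2$ and then conclude by density as $\overline{\Conv(\Delta_2([0,\pi]))} = \PP_2([0,\pi])$.

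The main obstacle is Step 2, namely the transfer of the ray-rigidity arguments to the compact interval $[0, \pi]$. The $p \neq 2$ case proceeds via essentially the same Mankiewicz + density chain, since step functions (corresponding to atomic measures) remain dense in $L^p((0,1))$. The $p = 2$ case requires more care, because the range of the parameter $t$ in the parametrization $\mu(x,\sigma,t)$ must be restricted so that both atoms lie inside $[0, \pi]$; nevertheless, the resulting family of two-atom measures is still rich enough to span a dense convex subset and to force the identity via the corresponding formula for $\W_2^2(\mu(x,\sigma,t), \mu(y,\rho,s))$.
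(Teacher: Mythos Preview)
Your Step~1 matches the paper's argument: both use lemma~\ref{lem.projectionintospheres}(3) together with the invariance of $\PP_p(X\times\{t\})$ (lemma~\ref{prop.invarianceXtimespi2}) and the fact that $\Phi$ fixes $\delta_{[x,t]}$ to conclude that $\Phi(\mu)$ is again supported on the meridian $\gamma$. The paper picks $t=\pi/2$, but any $t\in(0,\pi)$ works, as you do; your appeal to corollary~\ref{cor:non-branching.geodesics.starting.at.pole} for uniqueness of the meridian through $[x,t]$ is equivalent to the paper's implicit use of lemma~\ref{prop.propertiessuspension}(2).

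Your Step~2 is where you diverge. The paper does not re-derive the rigidity of $\PP_p$ over a closed interval; it simply invokes \cite[Lemma~4.4]{Santos2022}, which states that a closed interval is isometrically rigid with respect to $\PP_p$ for all $p>1$, and concludes in one line. Your plan to redo this via Mankiewicz/Lamperti for $p\neq 2$ and via the Kloeckner-type $\mu(x,\sigma,t)$ parametrisation for $p=2$ is a legitimate route, but it is considerably more work than needed and, as you yourself flag, the $p=2$ adaptation to a bounded interval is delicate: the range of the third parameter becomes a compact interval (constrained at both ends), so one must also rule out the reflection, not just obtain $\varphi\in\Isom([0,\infty))$ as in the ray case. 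None of this is wrong, but it duplicates an already-available lemma; citing \cite{Santos2022} gives the same conclusion with no extra effort.
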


\begin{proof}
Let $\mu\in \PP_p(\gamma).$ By lemma \ref{lem.projectionintospheres},  
$\W_p(\mu,\delta_{\gamma_{1/2}})=\W_p(\mu,\PP_p(X\times\{\pi/2\}))$. Since $\Phi (\delta_{\gamma_{1/2}})=\delta_{\gamma_{1/2}}$ and $\PP_p(X\times\{\pi/2\})$ is invariant under $\Phi$, we have 
\[
\W_p(\Phi(\mu),\delta_{\gamma_{1/2}})=\W_p(\Phi(\mu),\PP_p(X\times\{\pi/2\})).
\]
Therefore, by item (2) in lemma \ref{lem.projectionintospheres}, $\Phi(\mu)$ is supported on $\gamma([0,1])$. Thus, $\Phi |_{\PP_p(\gamma)}\in \Isom(\PP_p(\gamma))$. By \cite[Lemma~4.4]{Santos2022}, a closed interval is isometrically rigid (although the proof of this lemma is only stated for $p=2$, the argument holds for all $p>1$). Since $\gamma([0,1])$ is isometric to a closed interval, we conclude that $\Phi(\mu)=\mu$.
\end{proof}

\subsection{Invariance of measures on parallels}
Now, we will prove that measures supported on parallels, that is, sets of the form $X\times\{t\}$, are invariant under the action of $\Isom(\PP_p(\Susp(X)))$.  To this end, we consider projections onto half-meridian geodesics, i.e.\ geodesics joining one pole of the suspension to the equator.

\begin{lemma}\label{lemma.projectionpointsintogeodesics}
 Fix $0\leq t \leq \pi/2$, $[x,t]\in \Susp(X)$   and let $\gamma$ be a geodesic with 
 $\gamma_0 ={\mathbf{0}}$ and $\gamma_1 \in X\times \lbrace \pi/2\rbrace$. Then the projection map $\proj_\gamma\colon \Susp(X)\to \gamma([0,1])\subset \Susp(X)$ given by  
\[
\proj_{\gamma}([x,t]) = \argmin \left(d_{\Susp}([x,t],\cdot)|_{\gamma([0,1])}\right)
\] 
is continuous. Moreover, if $t<\pi/2$, then $\proj_{\gamma}([x,t]) = \gamma_{s}$ with $\frac{\pi}{2} s=t$ if and only if $[x,t]=\gamma_{s}.$ 
\end{lemma}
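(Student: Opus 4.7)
The plan is to compute $\proj_\gamma$ explicitly using the spherical geometry of $\Susp(X)$. First, since $\gamma_0=\mathbf{0}$ and $\gamma_1\in X\times\{\pi/2\}$, say $\gamma_1=[y_0,\pi/2]$, and geodesics starting at a pole are non-branching by Corollary~\ref{cor:non-branching.geodesics.starting.at.pole}, the geodesic $\gamma$ must be $\gamma_s=[y_0,s\pi/2]$. For any $[x,t]\in\Susp(X)$, setting $a=d(x,y_0)$ (the value is irrelevant at the poles since it multiplies a vanishing factor), the spherical law of cosines gives
\[
\cos d_{\Susp}([x,t],\gamma_s)=\cos t\cos(s\pi/2)+\sin t\sin(s\pi/2)\cos a,
\]
where $\cos a>0$ thanks to the hypothesis $\diam(X)<\pi/2$.

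Next I would show the argmin is a singleton. Calling the right-hand side $f(s)$ and rewriting it as $f(s)=R\cos(s\pi/2-\theta)$, with $R=\sqrt{\cos^2 t+\sin^2 t\cos^2 a}>0$ and $\theta\in[0,\pi]$ uniquely determined by $R\cos\theta=\cos t$ and $R\sin\theta=\sin t\cos a$, the unique maximiser of $f$ on $[0,1]$ is $s=2\theta/\pi$ when $\theta\leq\pi/2$, and $s=1$ otherwise (since $f$ is then strictly increasing on $[0,1]$). This proves that $\proj_\gamma$ is well-defined. Continuity then follows by a standard argument: given $[x_n,t_n]\to[x,t]$, any subsequential limit $s'$ of $\proj_\gamma([x_n,t_n])$ in the compact set $\gamma([0,1])$ satisfies $d_{\Susp}([x,t],s')=\min_{\sigma\in\gamma([0,1])}d_{\Susp}([x,t],\sigma)$ by continuity of $d_{\Susp}$, and by uniqueness of the minimiser, $s'=\proj_\gamma([x,t])$.

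For the ``Moreover'' claim, when $t\in[0,\pi/2)$ the critical-point equation $f'(s)=0$ reduces to $\tan(s\pi/2)=\tan t\cos a$, and since $\tan t\cos a\in[0,\tan t]$ it admits a unique solution in $[0,1)$, which is the maximiser of $f$ on $[0,1]$. If $\proj_\gamma([x,t])=\gamma_s$ with $s\pi/2=t$, substituting yields $\tan t=\tan t\cos a$; for $t>0$ this forces $\cos a=1$, whence $x=y_0$ and $[x,t]=[y_0,t]=\gamma_{2t/\pi}=\gamma_s$, while for $t=0$ both sides vanish and $[x,0]=\mathbf{0}=\gamma_0$ trivially. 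The converse direction is immediate from uniqueness of the projection.

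I expect the main delicate point to be the case analysis at the boundary — in particular the poles $\{\mathbf{0},\ppi\}$ (where the identification collapses $X$ and the quantity $d(x,y_0)$ is no longer meaningful, but is multiplied by a vanishing factor) and the range $t\geq\pi/2$, where the interior critical point of $f$ exits $[0,1]$ and the unique maximum must be pushed to the endpoint $s=1$. Once the rewriting $f(s)=R\cos(s\pi/2-\theta)$ is in place, each of these cases is dispatched by inspection.
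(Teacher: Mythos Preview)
Your proposal is correct and follows essentially the same route as the paper: both parametrise $\gamma$ as $\gamma_s=[y_0,s\pi/2]$, compute $d_{\Susp}([x,t],\gamma_s)$ via the suspension metric, use $\diam(X)<\pi/2$ to find the unique minimiser explicitly (the paper writes it as $[y_0,\tan^{-1}(\cos(d(x,y_0))\tan t)]$, which is your $s\pi/2=\theta$), and read off continuity and the ``moreover'' from this formula. Your additional $R\cos(s\pi/2-\theta)$ rewriting and the subsequential-limit argument for continuity are slightly more explicit than the paper's terse treatment, and your case analysis for $t>\pi/2$ is not needed since the statement restricts to $t\le\pi/2$, but none of this affects correctness.
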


\begin{proof}
Fix $[x,t]\in \Susp(X)$ with $t\in [0,\pi/2]$ and let $\gamma$ be a geodesic given by
\[
\gamma_s = [y,s\pi/2], \quad 0\leq s\leq 1
\]
for some $y\in X$. Then
\begin{align*}
d_{\Susp}([x,t], \gamma_s) & = d_{\Susp}([x,t], [y,s\pi/2]) \\ 
& =\cos^{-1}\left(\cos(t)\cos(s\pi/2)+\sin(t)\sin(s\pi/2)\cos(d(x,y))\right).   
\end{align*}
It is then clear that the function $s\mapsto d_{\Susp}([x,t], \gamma_s)$ is differentiable. Taking into account that $d(x,y)< \pi/2$, we may compute the (unique) minimum explicitly:
\[
\gamma_s=
\begin{cases}
[y, \tan^{-1}\left(\cos(d(x,y)\tan(t)\right)], &0\leq t<\pi/2;\\
[y,\pi/2], &t=\pi/2.
\end{cases}
\]
Therefore, the map $[x,t] \mapsto \proj_{\gamma}([x,t]) = \gamma_s$
is well-defined and continuous. The second statement follows from the explicit description of $\proj_{\gamma}([x,t])$.
\end{proof}

As a consequence of the previous lemma, we get an analogous result for probability measures.

\begin{lemma}\label{lemma.projectionintogeodics}
Fix $0<t<\pi/2$ and let $\gamma$ be a geodesic with $\gamma_0 ={\mathbf{0}}$ and $\gamma_1 \in X\times \lbrace \pi/2\rbrace$.  If $\mu \in \PP_p(X \times \lbrace t \rbrace)$, then the following assertions hold:
\begin{enumerate} 
\item $\argmin\left(\W_p(\cdot,\mu)|_{\PP_p(\gamma)}\right) = \{\proj_{\gamma \#}\mu\}$. 
\item $\gamma_t$ is an atom of $\mu$ if and only if it is an atom of $\proj_{\gamma\#}\mu$. More precisely, 
\[\mu (\{\gamma_t\}) = \proj_{\gamma\#}\mu(\{\gamma_t\}).\]
\end{enumerate}
\end{lemma}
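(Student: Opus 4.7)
The plan is to build an explicit optimal plan between $\mu$ and its projection using the pointwise closest-point map, and then use cyclical monotonicity to show this plan is optimal and the projection uniquely minimises.

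For (1), I would first consider the plan $\tilde{\pi} = (\id,\proj_\gamma)_\#\mu$, which is a transport plan between $\mu$ and $\proj_{\gamma\#}\mu$. Its support lies inside the graph of $\proj_\gamma$, and the pointwise property $d_{\Susp}([x,t],\proj_\gamma[x,t]) \le d_{\Susp}([x,t],q)$ for every $q \in \gamma([0,1])$ (which is exactly the defining property of $\proj_\gamma$ from Lemma \ref{lemma.projectionpointsintogeodesics}) immediately yields cyclical monotonicity: for any finite family $\{([x_i,t],\proj_\gamma[x_i,t])\}$ and any permutation $\sigma$, one has $\sum d_{\Susp}^p([x_i,t],\proj_\gamma[x_i,t]) \le \sum d_{\Susp}^p([x_i,t],\proj_\gamma[x_{\sigma(i)},t])$. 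Hence $\tilde{\pi} \in \Opt(\mu,\proj_{\gamma\#}\mu)$, so
\[
\W_p^p(\mu,\proj_{\gamma\#}\mu) = \int_{\Susp(X)} d_{\Susp}^p([x,t],\proj_\gamma[x,t])\, d\mu([x,t]).
\]
Then, for an arbitrary $\nu \in \PP_p(\gamma)$ and any $\pi \in \Opt(\mu,\nu)$, the integrand $d_{\Susp}^p([x,t],q)$ on $\supp(\pi)$ dominates $d_{\Susp}^p([x,t],\proj_\gamma[x,t])$ pointwise, so $\W_p^p(\mu,\nu) \ge \W_p^p(\mu,\proj_{\gamma\#}\mu)$, establishing that $\proj_{\gamma\#}\mu$ is a minimiser.

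For uniqueness, suppose $\nu \in \argmin(\W_p(\cdot,\mu)|_{\PP_p(\gamma)})$ and let $\pi \in \Opt(\mu,\nu)$. The previous inequality becomes an equality, which forces
\[
d_{\Susp}^p([x,t],q) = d_{\Susp}^p([x,t],\proj_\gamma[x,t]) \qquad \pi\text{-a.e.}
\]
By the uniqueness of the closest point on $\gamma$ (Lemma \ref{lemma.projectionpointsintogeodesics}), this forces $q = \proj_\gamma[x,t]$ for $\pi$-a.e.\ $([x,t],q)$. Hence $\pi = (\id,\proj_\gamma)_\#\mu$, and so $\nu = \proj_{\gamma\#}\mu$.

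For (2), I would note that $\supp(\mu) \subset X \times \{t\}$, so
\[
\proj_{\gamma\#}\mu(\{\gamma_t\}) = \mu\bigl(\proj_\gamma^{-1}(\{\gamma_t\}) \cap (X\times\{t\})\bigr).
\]
By the second assertion of Lemma \ref{lemma.projectionpointsintogeodesics}, the only point in $X\times\{t\}$ whose projection onto $\gamma$ equals $\gamma_t$ (the unique point of $\gamma$ at level $t$) is $\gamma_t$ itself. Therefore $\proj_\gamma^{-1}(\{\gamma_t\}) \cap (X\times\{t\}) = \{\gamma_t\}$, which gives $\proj_{\gamma\#}\mu(\{\gamma_t\}) = \mu(\{\gamma_t\})$, as required.

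The main technical point is verifying cyclical monotonicity of the graph of $\proj_\gamma$ in step (1); once this is in hand, the rest is a direct integration argument using the pointwise inequality defining the projection, together with the uniqueness statement from Lemma \ref{lemma.projectionpointsintogeodesics}.
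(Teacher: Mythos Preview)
Your proof is correct and follows essentially the same route as the paper. The one minor difference is that you first verify cyclical monotonicity of the graph of $\proj_\gamma$ to certify that $(\id,\proj_\gamma)_\#\mu$ is optimal, whereas the paper bypasses this: it simply uses that the cost of this plan is, by the variational definition of $\W_p$, an upper bound for $\W_p^p(\mu,\proj_{\gamma\#}\mu)$, which is all that is needed for the chain of inequalities (optimality then drops out a posteriori). Your extra step is correct but not required.
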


\begin{proof} First, we prove (1). 
Observe that $\proj_{\gamma\#}\mu\in \argmin(\W_p(\mu,\cdot)|_{\PP_p(\gamma)})$. Indeed, for any $\nu\in \PP_p(\gamma)$ and any $\pi\in\Opt(\mu,\nu)$ we have 
\begin{align*}
\W_p^p(\mu,\nu) &= \int d^p_{\Susp}([x,t],[y,s])\ d\pi([x,t],[y,s])\\
&\geq \int d^p_{\Susp}([x,t],\proj_\gamma([x,t]))\ d\mu([x,t])\\
&\geq \W^p_p(\mu,\proj_{\gamma\#}\mu).
\end{align*}
Here, the first inequality follows from the definition of the map $\proj_\gamma$, and the second one follows from the definition of $\W_p$ and the fact that $\proj_\gamma$ is continuous (see lemma~\ref{lemma.projectionpointsintogeodesics}). Furthermore, if $\eta \in \argmin(\W_p(\mu,\cdot)|_{\PP_p(\gamma)})$ then the previous inequalities are all equalities, and in particular,
\[
\int d^p_{\Susp}([x,t],[y,s])\ d\pi([x,t],[y,s]) = \int d^p_{\Susp}([x,t],\proj_\gamma([x,t]))\ d\mu([x,t]).
\]
It follows that 
\[
d_{\Susp}([x,t],[y,s]) = d_{\Susp}([x,t],\proj_\gamma([x,t])) \quad \text{$\pi$-a.e. $([x,t],[y,s])$}
\]
which in turn implies that $\pi = (\id,\proj_\gamma)_\#\mu$. Since $\pi$ is an optimal plan between $\mu$ and $\eta$, we conclude that $\eta = \proj_{\gamma\#}\mu$.

Finally, by lemma~\ref{lemma.projectionpointsintogeodesics} and the fact that $t <\pi/2$ and $\mu\in\PP_p(X\times\{t\})$, we have
\[
\proj_{\gamma\#}\mu (\{\gamma_t\})= \mu (\proj_{\gamma}^{-1}(\{\gamma_t\})\cap X\times\{t\}) = \mu(\{\gamma_t\}),
\]
and item (2) follows.
\end{proof}

\begin{lemma}\label{lem:invariance.atomic.measures.equator}
If $\mu \in \Delta_n(X\times \lbrace \pi/2\rbrace)$  and $\Phi \in K$, then $\Phi(\mu) = \mu$.
\end{lemma}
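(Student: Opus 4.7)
We can assume the atoms are distinct and write $\mu=\sum_{i=1}^{n}\lambda_i\delta_{[x_i,\pi/2]}$. Since $\Phi\in K$ fixes $\delta_{\mathbf{0}}$, Lemma \ref{prop.invarianceXtimespi2} gives $\Phi(\mu)\in\PP_p(X\times\{\pi/2\})$, so it suffices to show that $\Phi(\mu)$ has an atom of mass $\lambda_i$ at $[x_i,\pi/2]$ for every $i$; as these masses sum to $1$, no other mass can be present and the conclusion $\Phi(\mu)=\mu$ will follow.

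The obstacle is that Lemma \ref{lemma.projectionintogeodics} only discriminates atoms on parallels $X\times\{t\}$ with $t<\pi/2$, while $\mu$ lives on the equator. To get around this, the plan is to push $\mu$ down to a sub-equatorial parallel via the maps $L_s([x,t])=[x,st]$ of Lemma \ref{lemma.uniquegeodesicfromvertex}. Fix $s\in(0,1)$, set $t=s\pi/2<\pi/2$, and let $\nu_s=(L_s)_\#\mu=\sum_i\lambda_i\delta_{[x_i,t]}\in\PP_p(X\times\{t\})$. By Lemma \ref{lemma.uniquegeodesicfromvertex}, $r\mapsto(L_r)_\#\mu$ is the \emph{unique} geodesic in $\PP_p(\Susp(X))$ joining $\delta_{\mathbf{0}}$ to $\mu$, and similarly $r\mapsto(L_r)_\#\Phi(\mu)$ is the unique such geodesic from $\delta_{\mathbf{0}}$ to $\Phi(\mu)$. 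Since $\Phi$ is an isometry fixing $\delta_{\mathbf{0}}$, it sends the first geodesic to the second, giving the key commutation
\[
\Phi(\nu_s)=\Phi\bigl((L_s)_\#\mu\bigr)=(L_s)_\#\Phi(\mu).
\]

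For each $i$, let $\gamma^i$ denote the half-meridian $r\mapsto[x_i,r\pi/2]$, so that $\gamma^i_s=[x_i,t]$. Because $\gamma^i([0,1])$ is contained in the full meridian $r\mapsto[x_i,r\pi]$, Proposition \ref{thm.fixinggeodesics} ensures that $\Phi$ fixes every element of $\PP_p(\gamma^i)$ pointwise; in particular $\Phi(\PP_p(\gamma^i))=\PP_p(\gamma^i)$ as sets. Applying Lemma \ref{lemma.projectionintogeodics}(1) both to $\nu_s$ and to $\Phi(\nu_s)\in\PP_p(X\times\{t\})$, and using that $\Phi$ is an isometry together with the pointwise invariance of $\PP_p(\gamma^i)$, we obtain
\[
\{(\proj_{\gamma^i})_\#\Phi(\nu_s)\}=\argmin\bigl(\W_p(\cdot,\Phi(\nu_s))|_{\PP_p(\gamma^i)}\bigr)=\Phi\bigl(\argmin(\W_p(\cdot,\nu_s)|_{\PP_p(\gamma^i)})\bigr)=\{(\proj_{\gamma^i})_\#\nu_s\}.
\]
Invoking Lemma \ref{lemma.projectionintogeodics}(2) (here one uses $t<\pi/2$), this yields $\Phi(\nu_s)(\{[x_i,t]\})=(\proj_{\gamma^i})_\#\nu_s(\{[x_i,t]\})=\lambda_i$ for every $i$.

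Summing over $i$ gives $\Phi(\nu_s)=\sum_i\lambda_i\delta_{[x_i,t]}=\nu_s$. Combined with the commutation identity above, we deduce $(L_s)_\#\Phi(\mu)=(L_s)_\#\mu$. Since $L_s$ restricts to a bijection $X\times\{\pi/2\}\to X\times\{t\}$ with continuous inverse $L_{1/s}$ on $X\times\{t\}$, and both $\Phi(\mu)$ and $\mu$ are supported in $X\times\{\pi/2\}$, pushing forward by $L_{1/s}$ produces $\Phi(\mu)=\mu$, as required. The crux of the argument is thus the double use of the uniqueness of the geodesic from $\delta_{\mathbf{0}}$: once to propagate $\Phi$ through the scaling $L_s$, and once (via the earlier lemmas) to ensure that projection onto a half-meridian is a well-defined and isometry-equivariant operation capable of reading off atomic masses.
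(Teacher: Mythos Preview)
Your proof is correct and follows essentially the same approach as the paper: both push $\mu$ down to a sub-equatorial parallel along the unique geodesic from $\delta_{\mathbf{0}}$ (the paper uses the midpoint $\mu_{1/2}$, you use a general $\nu_s=(L_s)_\#\mu$), project onto the half-meridians $\gamma^i$, invoke Proposition~\ref{thm.fixinggeodesics} and Lemma~\ref{lemma.projectionintogeodics} to read off the atomic masses, and then transfer the conclusion back up to the equator. The only cosmetic difference is in the final step, where the paper cites Corollary~\ref{cor:non-branching.geodesics.starting.at.pole} while you explicitly invert $L_s$ via $L_{1/s}$; both encode the same non-branching/uniqueness principle.
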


\begin{proof}
Let $\mu = \sum_{i=1}^n \lambda_i\delta_{[x_i,\pi/2]}$ be an atomic measure supported on $X\times \lbrace\pi/2 \rbrace.$ There exists a unique geodesic joining $\delta_{\mathbf{0}}$ with $\mu$. Let $\mu_{1/2}$ be the midpoint of this geodesic, and observe that $\supp(\mu_{1/2})\subset X\times \lbrace\pi/4\rbrace.$
Denote by $\gamma_i$ the unique geodesic in $\Susp(X)$ joining ${\mathbf{0}}$ with $[x_i,\pi/2].$  We will now consider the projection of the measure $\mu_{1/2}$ to each $\PP_p(\gamma_i)$. By theorem~\ref{thm.fixinggeodesics}, the isometry $\Phi$ restricted to $\PP_p(\gamma_i)$ is the identity. Therefore, 
\[
\W_p(\mu_{1/2},\PP_p(\gamma_i)) =\W_p(\Phi(\mu_{1/2}),\PP_p(\gamma_i)),
\]
which implies
\begin{align*}
\{\proj_{\gamma_i\#}\mu_{1/2}\}&=\argmin\left(\W_p(\mu_{1/2},\cdot)|_{\PP_p(\gamma_i)}\right)\\
&=\argmin\left(\W_p(\Phi(\mu_{1/2}),\cdot)|_{\PP_p(\gamma_i)}\right)\\
&=\{\proj_{\gamma_i\#}\Phi(\mu_{1/2})\}.
\end{align*}
Hence, $\mu_{1/2}$ and $\Phi(\mu_{1/2})$ have the same atoms with the same weights, by lemma~\ref{lemma.projectionintogeodics}. In other words, $\Phi(\mu_{1/2})=\mu_{1/2}$ and, by corollary~\ref{cor:non-branching.geodesics.starting.at.pole}, $\Phi(\mu)=\mu$.
\end{proof}

\begin{proposition}\label{lemma.fixslice}
If $t \in [0,\pi]$ and $\Phi \in K,$ then $\Phi(\mu)=\mu$ for all $\mu \in \PP_p(X\times \lbrace t \rbrace).$
\end{proposition}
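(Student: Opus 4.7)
The plan is to reduce the claim to two already-established ingredients: the invariance of atomic equatorial measures (lemma \ref{lem:invariance.atomic.measures.equator}) and the uniqueness of geodesics from the poles (lemma \ref{lemma.uniquegeodesicfromvertex}). I would first handle the equator $X\times\{\pi/2\}$ by a density argument. Since $X$ is compact, so is $X\times\{\pi/2\}$, and consequently the set of atomic measures $\bigcup_{n\geq 1}\Delta_n(X\times\{\pi/2\})$ is $\W_p$-dense in $\PP_p(X\times\{\pi/2\})$. By lemma \ref{lem:invariance.atomic.measures.equator} each of these atomic measures is fixed by any $\Phi\in K$, and since $\Phi$ is continuous (being an isometry), $\Phi$ must fix every $\mu\in\PP_p(X\times\{\pi/2\})$.

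Next I would transfer this to an arbitrary parallel $X\times\{t\}$ with $t\in(0,\pi/2)$. Given $\mu\in\PP_p(X\times\{t\})$, set $\tilde\mu=(L_{\pi/(2t)})_{\#}\mu\in\PP_p(X\times\{\pi/2\})$. Lemma \ref{lemma.uniquegeodesicfromvertex} asserts that $s\mapsto(L_s)_{\#}\tilde\mu$ is the unique geodesic from $\delta_{\mathbf{0}}$ to $\tilde\mu$, and a direct computation shows that it passes through $\mu$ at parameter $s=2t/\pi$. Since $\Phi\in K$ fixes $\delta_{\mathbf{0}}$ and, by the previous paragraph, also fixes $\tilde\mu$, the image of this geodesic under $\Phi$ is a geodesic joining the same two endpoints; uniqueness then forces $\Phi(\mu)=\mu$. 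For $t\in(\pi/2,\pi)$ the same argument applies after conjugating $\Phi$ by the push-forward of the antipodal isometry $R\colon[x,s]\mapsto[x,\pi-s]$ of $\Susp(X)$; a short check shows $R_{\#}^{-1}KR_{\#}=K$, so this conjugation reduces the southern hemisphere case to the northern one. The boundary values $t\in\{0,\pi\}$ are trivial because $\PP_p(X\times\{0\})=\{\delta_{\mathbf{0}}\}$ and $\PP_p(X\times\{\pi\})=\{\delta_{\ppi}\}$ are singletons pointwise fixed by every element of $K$.

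The main conceptual step is the geodesic-uniqueness argument on a single hemisphere; once this is in place, it combines cleanly with the density step on the equator and the obvious symmetry between the hemispheres to cover all parallels. No genuine obstacle is expected beyond keeping track of the parametrisation $s=2t/\pi$ and verifying that conjugation by $R_{\#}$ preserves $K$.
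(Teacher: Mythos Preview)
Your proposal is correct and follows essentially the same route as the paper: density of atomic measures handles the equator, and then uniqueness of the geodesic from $\delta_{\mathbf{0}}$ to the equatorial lift $\widetilde\mu$ pins down every interior slice. The only cosmetic differences are that the paper cites the non-branching corollary rather than lemma~\ref{lemma.uniquegeodesicfromvertex} for the uniqueness step, and for $t\in(\pi/2,\pi)$ it simply says ``a similar argument'' (using $\delta_{\ppi}$) instead of your explicit conjugation by $R_{\#}$; your version is arguably cleaner on both counts.
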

\begin{proof}
The result for $t\in\{0,\pi\}$ holds trivially by the very definition of $K$. For $t = \pi/2$, the result follows from lemma~\ref{lem:invariance.atomic.measures.equator} and the fact that atomic measures are dense in $\PP_p(X\times\{\pi/2\})$ by the compactness of $X$. Finally, for arbitrary $t\in(0,\pi/2)$ and $\mu\in\PP_p(X\times\{t\})$, there is $\widetilde\mu\in \PP_p(X\times \{\pi/2\})$ with $[x,\pi/2]\in\supp(\widetilde\mu)$ if and only if $[x,t]\in\supp(\mu)$. Moreover, $\mu$ is in the interior of a geodesic joining $\delta_{\mathbf{0}}$ and $\widetilde\mu$. Since $\Phi(\widetilde\mu) = \widetilde\mu$ and $\Phi(\delta_{\mathbf{0}})=\delta_{\mathbf{0}}$, it follows that $\Phi(\mu)=\mu$ by corollary~\ref{cor:non-branching.geodesics.starting.at.pole}. A similar argument yields the result for $t\in(\pi/2,\pi)$.
\end{proof}

\subsection{Isometric rigidity of suspensions.} In this final subsection we show that atomic measures are invariant under $\Isom(\PP_p(\Susp(X)))$ and, appealing to the density of these measures, we obtain theorem~\ref{t:rigidity of compact spaces}. We follow a strategy analogous to the one used for compact rank one symmetric spaces in \cite{Santos2022}. Since we are working in more generality, we will assume the following technical condition.

\begin{cond}\label{cond:general position}
    For any $\{x_1,\dots,x_k\}\subset X$ and $\{t_1,\dots,t_n\}\subset (0,\pi/2)\cup(\pi/2,\pi)$ such that $x_l\neq x_m$ and $t_i\neq t_j$ for $l\neq m$, $i\neq j$, there exists $x_o\in X$ such that all the numbers 
    \[
    \tan(t_j)\cos(d(x_o,x_m)) \quad (\text{with}\ 1\leq m\leq k,\ 1\leq j\leq n)
    \]
    are different.
\end{cond}

\begin{remark}\label{rem:manifolds and condition b}
Condition \ref{cond:general position} holds when $X$ is a finite-dimensional closed Riemannian manifold or Alexandrov space (with curvature bounded below) with $\diam(X) < \pi/2$. Indeed, let 
\[
S_{ijlm} = \left\{x\in X : \tan(t_i)\cos(d(x,x_l))=\tan(t_j)\cos(d(x,x_m))\right\} = f_{lm}^{-1}(c_{ij}),
\]
where 
\begin{equation}\label{eq:flm}
f_{lm}(x) = \frac{\cos(d(x,x_l))}{\cos(d(x,x_m))}
\end{equation}
and $c_{ij} = \frac{\tan(t_j)}{\tan(t_i)}$. In particular, Condition \ref{cond:general position} holds for some $x_o\in X$ if and only if 
\[
x_o \in \bigcap_{\substack{i\neq j\\ l\neq m}} X\setminus S_{ijlm}.
\] 

When $X$ is a Riemannian manifold, each $f_{lm}$ is smooth outside $\Cut(x_l)\cup \Cut(x_m) \cup \{x_l,x_m\}$, and a direct computation shows that $x$ is a critical point of $f_{lm}$ only if $d(x,x_l)=d(x,x_m)$. In particular, $f_{lm}(x) = 1$ for any critical point $x$ of $f_{lm}$. Since $c_{ij} \neq 1$ for any $i\neq j$, each $c_{ij}$ is a regular value of $f_{lm}$, which in turn implies that $S_{ijlm}$ is either empty or a codimension 1 submanifold in $X$, for every choice of $i,j,l,m$ with $i\neq j$, $l\neq m$. Therefore, $X\setminus S_{ijlm}$ is open and dense for any choice of $i\neq j$ and $l\neq m$ and, since the finite intersection of open and dense sets is open and dense itself, the claim follows.

When $X$ is an Alexandrov space, $f_{lm}$ defined as in \eqref{eq:flm} is semiconcave away from $\Cut(x_l)\cup \Cut(x_m)\cup\{x_l,x_m\}$, which in turn is a set of $n$-dimensional Hausdorff measure zero (see \cite[Proposition 3.1]{OtsuShioya1994}). Moreover, a direct computation using the first variation formula (see \cite[Theorem 3.5]{OtsuShioya1994}) proves that $f_{lm}=1$ at any critical point. Finally, the Morse lemma (see \cite{Perelman1993}) applied to regular points of $f_{lm}$ proves that $X\setminus S_{ijlm}$ is open and dense for any choice of $i\neq j$ and $l\neq m$. Therefore, the set
\[
\left(\bigcap_{\substack{i\neq j\\l\neq m}}X\setminus S_{ijlm}\right) \cap \left(\bigcap_{l\neq m} X\setminus(\Cut(x_l)\cup \Cut(x_m)\cup \{x_l,x_m\})\right),
\]
being a finite intersection of dense sets, is dense itself, and in particular it is non-empty.
\end{remark}

\begin{proposition}\label{prop:invariance of atomic measures in general position}
Let $n\in\NN$ and set
\begin{equation}\label{eq:atomic measure}
\mu = \sum_{j=1}^{n}\sum_{i=1}^{k_j}\lambda_{ij} \delta_{[x_{ij},t_j]} \in \PP_p(\Susp(X))
\end{equation}
with $0<t_1< \dots< t_n<\pi$ and $x_{i_1j_1}\neq x_{i_2j_2}$ for any $(i_1,j_1)\neq (i_2,j_2)$. Then, for any $\Phi\in K$, $\Phi(\mu)=\mu$.
\end{proposition}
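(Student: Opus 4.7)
The plan is to combine two projection maps that are respected by $\Phi$: onto a full meridian $\gamma$ (on which $\Phi$ fixes measures pointwise, by Proposition~\ref{thm.fixinggeodesics}) and onto parallels $X\times\{t\}$ (on which $\Phi$ is the identity, by Proposition~\ref{lemma.fixslice}). First, I would apply Condition~\ref{cond:general position} to the sets $\{x_{ij}\}$ and $\{t_j : t_j\neq\pi/2\}$ to obtain $x_o\in X$ such that the products $\tan(t_j)\cos(d(x_o,x_{ij}))$ are pairwise distinct, and set $\gamma_s = [x_o,s\pi]$ for $s\in[0,1]$. An explicit computation analogous to Lemma~\ref{lemma.projectionpointsintogeodesics} shows that the closest-point projection $\proj_\gamma\colon\Susp(X)\to\gamma([0,1])$ is well-defined and continuous, sending each non-equatorial atom $[x_{ij},t_j]$ to a distinct interior point $\gamma_{s_{ij}}$ with $\tan(s_{ij}\pi) = \tan(t_j)\cos(d(x_o,x_{ij}))$, while any equatorial atoms all go to $\gamma_{1/2}$.

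Reasoning as in Lemma~\ref{lemma.projectionintogeodics}, I would show that $\proj_{\gamma\#}\mu$ is the unique minimizer of $\W_p(\mu,\cdot)$ over $\PP_p(\gamma)$. Since $\Phi$ is an isometry fixing $\PP_p(\gamma)$ pointwise, $\proj_{\gamma\#}\Phi(\mu) = \proj_{\gamma\#}\mu$; in particular, $\Phi(\mu)$ has no mass at the poles. Then, using Lemma~\ref{lem.projectionintospheres}(2) together with the unique-minimizer characterization (valid precisely because $\Phi(\mu)$ has no pole mass) and the invariance of $\PP_p(X\times\{t\})$ under $\Phi$, I would obtain $T_{t\#}\Phi(\mu) = T_{t\#}\mu = \sum_{ij}\lambda_{ij}\delta_{[x_{ij},t]}$ for every $t\in(0,\pi)$. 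This forces $\Phi(\mu)$ to be supported on $\bigcup_{ij}V_{ij}$, where $V_{ij} = \{[x_{ij},t] : t\in[0,\pi]\}$, with total $\Phi(\mu)$-mass exactly $\lambda_{ij}$ on each $V_{ij}$.

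Combining both constraints, I would deduce that the $\Phi(\mu)$-mass on $V_{ij}$ must be concentrated at finitely many values $\tau_{(ij)(kl)}\in(0,\pi)$ determined by $\tan(\tau_{(ij)(kl)})\cos(d(x_o,x_{ij})) = \tan(t_l)\cos(d(x_o,x_{kl}))$ as $(k,l)$ ranges over the atom indices of $\mu$; the diagonal value is $\tau_{(ij)(ij)} = t_j$. To pin the mass down uniquely, I would choose a second point $x_o'\in X$ generically, via another application of Condition~\ref{cond:general position} to an enlarged data set (exploiting the codimension-one nature of the ``coincidence loci'' underpinning Condition~\ref{cond:general position}, as discussed in Remark~\ref{rem:manifolds and condition b}), so that the analogous set $\{\tau'_{(ij)(kl)}\}$ arising from the second meridian intersects the first only at $t_j$. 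It then follows that $\Phi(\mu)|_{V_{ij}}$ is concentrated at $[x_{ij},t_j]$, giving $\Phi(\mu)|_{V_{ij}} = \lambda_{ij}\delta_{[x_{ij},t_j]}$; summing over $(i,j)$ yields $\Phi(\mu)=\mu$. The main obstacle will be this last step: securing two meridians whose allowed-value sets intersect only at the diagonal, which rests on the genericity arguments behind Condition~\ref{cond:general position} rather than a single direct application of it.
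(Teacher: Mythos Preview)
Your approach is genuinely different from the paper's. The paper argues by induction on the number of heights $n$: given $\mu$ supported on $n$ levels, it constructs an atomic measure $\widetilde{\mu}$ supported on $n-1$ levels and a pole-supported measure $\nu$ with $\mu\in\Int(\widetilde{\mu},\nu)$; since $\Phi$ fixes both $\widetilde{\mu}$ (induction) and $\nu$ (Proposition~\ref{thm.fixinggeodesics}), one gets $\Phi(\mu)\in\Int(\widetilde{\mu},\nu)$, and an explicit description of $\Int(\widetilde{\mu},\nu)$ together with a \emph{single} application of Condition~\ref{cond:general position} then forces $\Phi(\mu)=\mu$. Your direct, non-inductive route via the parallel projections $T_t$ and two meridian projections is a legitimate alternative, and your reduction of $\Phi(\mu)$ to a measure supported on finitely many points $[x_{ij},\tau_{(ij)(kl)}]$ with prescribed marginals is correct.

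There is, however, a genuine gap in your final step as written. You propose securing the second meridian by ``exploiting the codimension-one nature of the coincidence loci \ldots\ as discussed in Remark~\ref{rem:manifolds and condition b}''. That remark establishes Condition~\ref{cond:general position} for manifolds and Alexandrov spaces, but it is not part of the hypotheses of the proposition; invoking it would restrict your argument to those settings. What you actually need can be obtained from Condition~\ref{cond:general position} itself, applied a second time with the height set enlarged to include the values $\tau_{(ij)(kl)}$ produced by the first meridian (these are all $\neq\pi/2$ and, by the first application, distinct from the original $t_l$). This yields $x_o'$ for which the projections of the points $[x_{ij},\tau_{(ij)(kl)}]$ onto the second meridian are pairwise distinct; comparing with $\proj_{\gamma'\#}\mu$ then forces $a_{(ij)(kl)}=0$ for $(k,l)\neq(i,j)$ directly, without needing the ``intersection of allowed-value sets'' formulation. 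With that correction, your argument goes through; the trade-off is that the paper's induction uses Condition~\ref{cond:general position} once per inductive step in a structurally simpler way, while your argument front-loads two applications with a larger data set.
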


\begin{proof}
We will proceed by induction over $n$. The result holds for $n=1$ by lemma \ref{lemma.fixslice}. Suppose now that $n\geq 2$ and the result holds for $n-1$. We will prove that it is also true for $n$. To do this, consider $\mu$ of the form \eqref{eq:atomic measure}, and assume that $t_j\neq \pi/2$ for all $j\in\{1,\dots,n\}$. Set
\[
\nu = \left(\sum_{i=1}^{k_1} \lambda_{i1}\right)\delta_{{\mathbf{0}}} + \left(\sum_{j=2}^{n}\sum_{i=1}^{k_j} \lambda_{ij}\right)\delta_{{\ppi}}.
\]
Hence, there exist $0<\widetilde{t}_1<\dots<\widetilde{t}_{n-1}<\pi$ and an atomic measure $\widetilde{\mu}$ supported on $\bigcup_{j=1}^{n-1} X\times \{\widetilde{t}_j\}$ such that $\mu\in\Int(\widetilde{\mu},\nu)$, i.e.\ $\mu$ is in the interior of a geodesic joining $\widetilde{\mu}$ and $\nu$ (see subsection~\ref{ss:geodesics and curvature bounds}). Indeed, we may define $\tilde{\mu}$ explicitly by letting 
\[
\widetilde{t}_j = \begin{cases}
    \frac{\pi t_1}{\pi+t_1-t_2}, & j = 1,\\
    \frac{\pi(t_{j+1}+t_1-t_2)}{\pi+t_1-t_2}, & 2\leq j\leq n-1,
\end{cases}
\]
\[
\widetilde{k}_j = \begin{cases}
    k_1 + k_2, & j = 1,\\
    k_{j+1}, & 2\leq j \leq n-1,
\end{cases}
\]
\[
\widetilde{\lambda}_{ij} = \begin{cases}
    \lambda_{i1} & j = 1,\ 1\leq i\leq k_1,\\
    \lambda_{(i-k_1)2} & j = 1,\ k_1+1\leq i\leq \widetilde{k}_1,\\
    \lambda_{i(j+1)} & 2\leq j \leq n-1,\ 1\leq i\leq \widetilde{k}_j,
\end{cases}
\]
\[
\widetilde{x}_{ij} = \begin{cases}
x_{i1}, & j=1,\ 1\leq i\leq k_1,\\
x_{(i-k_1)2}, & j=1,\ k_1+1\leq i\leq \widetilde{k}_1,\\
x_{i(j+1)}, & 2\leq j\leq n-1, 1\leq i\leq \widetilde{k}_j,
\end{cases}
\]
and setting
\[
\widetilde{\mu} = \sum_{j=1}^{n-1}\sum_{i=1}^{\widetilde{k}_j}\widetilde{\lambda}_{ij} \delta_{[\widetilde{x}_{ij},\widetilde{t}_j]}.
\]
Since $\nu$ is supported on $\{{\mathbf{0}},{\ppi}\}$,  proposition~\ref{thm.fixinggeodesics} implies that $\Phi(\nu)=\nu$. By the induction hypothesis, $\Phi(\widetilde{\mu})=\widetilde{\mu}$. Therefore, $\Phi(\mu)\in \Int(\widetilde{\mu},\nu)$.

By cyclical monotonicity, $\mu'\in \Int(\widetilde{\mu},\nu)$ if and only if 
\begin{equation}\label{eq:intermediate-atomic-measure}
    \mu' = \sum_{i=1}^{k_1} (\lambda'_{i1}\delta_{[x_{i1},t_1]} + \lambda''_{i1}\delta_{[x_{i1},t_2]}) + \sum_{i=1}^{k_2} (\lambda'_{i2}\delta_{[x_{i2},t_1]} + \lambda''_{i2}\delta_{[x_{i2},t_2]}) + \sum_{j=3}^{n}\sum_{i=1}^{k_j} \lambda_{ij}\delta_{[x_{ij},t_j]}
\end{equation}
with $\lambda'_{i1}+\lambda''_{i1}=\lambda_{i1}$ and $\lambda'_{i2}+\lambda''_{i2}=\lambda_{i2}$ (see Figure \ref{fig:rigidity of suspensions}).

\begin{figure}
    \centering
    \def\svgwidth{0.9\linewidth}
\begingroup%
  \makeatletter%
  \providecommand\color[2][]{%
    \errmessage{(Inkscape) Color is used for the text in Inkscape, but the package 'color.sty' is not loaded}%
    \renewcommand\color[2][]{}%
  }%
  \providecommand\transparent[1]{%
    \errmessage{(Inkscape) Transparency is used (non-zero) for the text in Inkscape, but the package 'transparent.sty' is not loaded}%
    \renewcommand\transparent[1]{}%
  }%
  \providecommand\rotatebox[2]{#2}%
  \newcommand*\fsize{\dimexpr\f@size pt\relax}%
  \newcommand*\lineheight[1]{\fontsize{\fsize}{#1\fsize}\selectfont}%
  \ifx\svgwidth\undefined%
    \setlength{\unitlength}{499.94998962bp}%
    \ifx\svgscale\undefined%
      \relax%
    \else%
      \setlength{\unitlength}{\unitlength * \real{\svgscale}}%
    \fi%
  \else%
    \setlength{\unitlength}{\svgwidth}%
  \fi%
  \global\let\svgwidth\undefined%
  \global\let\svgscale\undefined%
  \makeatother%
  \begin{picture}(1,0.36196608)%
    \lineheight{1}%
    \setlength\tabcolsep{0pt}%
    \put(0,0){\includegraphics[width=\unitlength,page=1]{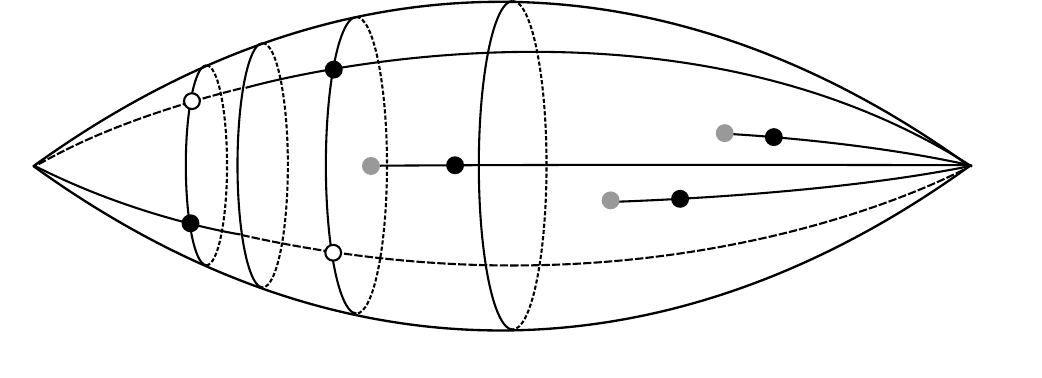}}%
    \put(-0.00036171,0.19831596){\color[rgb]{0,0,0}\makebox(0,0)[lt]{\lineheight{1.25}\smash{\begin{tabular}[t]{l}$\bf 0$\end{tabular}}}}%
    \put(0.94181194,0.19831596){\color[rgb]{0,0,0}\makebox(0,0)[lt]{\lineheight{1.25}\smash{\begin{tabular}[t]{l}$\ppi$\end{tabular}}}}%
    \put(0.442191,0.00462215){\color[rgb]{0,0,0}\makebox(0,0)[lt]{\lineheight{1.25}\smash{\begin{tabular}[t]{l}$X\times\{\pi/2\}$\end{tabular}}}}%
    \put(0.11035449,0.06414078){\color[rgb]{0,0,0}\makebox(0,0)[lt]{\lineheight{1.25}\smash{\begin{tabular}[t]{l}$X\times\{t_1\}$\end{tabular}}}}%
    \put(0.27008218,0.0162113){\color[rgb]{0,0,0}\makebox(0,0)[lt]{\lineheight{1.25}\smash{\begin{tabular}[t]{l}$X\times\{t_2\}$\end{tabular}}}}%
    \put(0.16598233,0.34308028){\color[rgb]{0,0,0}\makebox(0,0)[lt]{\lineheight{1.25}\smash{\begin{tabular}[t]{l}$X\times\{\widetilde{t}_1\}$\end{tabular}}}}%
    \put(0,0){\includegraphics[width=\unitlength,page=2]{rigidity_of_suspensions.pdf}}%
  \end{picture}%
\endgroup%

    \caption{In this diagram, $\mu$ is supported on the black dots, $\widetilde{\mu}$ is supported on the grey ones, and $\nu$ is supported on the endpoints of the suspension, $\mathbf{0}$ and $\ppi$, in such a way that $\mu\in\Int(\widetilde{\mu},\nu)$. Any other measure $\mu'\in\Int(\widetilde{\mu},\nu)$ is atomic, and only differs from $\mu$ by the mass it gives to the points $\{[x_{i1},t_1]\}_{i=1}^{k_1}$, $\{[x_{i1},t_2]\}_{i=1}^{k_1}$, $\{[x_{i2},t_1]\}_{i=1}^{k_2}$, and $\{[x_{i2},t_2]\}_{i=1}^{k_2}$, i.e.\ $\mu'$ may give positive mass to the white dots.}
    \label{fig:rigidity of suspensions}
\end{figure}

Observe now that, if $\gamma$ is a geodesic in $\Susp(X)$ with $\gamma_0={\mathbf{0}}$, $\gamma_1={\ppi}$, then $\proj_{\gamma\#}\Phi(\mu)=\proj_{\gamma\#}\mu$ by lemma \ref{lemma.projectionintogeodics}. Suppose that $\Phi(\mu)\neq\mu$ and we choose $x_o\in X$ satisfying Condition \ref{cond:general position} with respect to $\{t_1,\dots,t_n\}$ and $\{x_{ij}:1\leq j\leq n,\ 1\leq i\leq k_j\}$. Let us verify that $\proj_{\gamma^o\#}\Phi(\mu)\neq\proj_{\gamma^o\#}\mu$ for the geodesic $\gamma^o$ given by $\gamma^o_t= [x_o,t]$. Note first that both $\proj_{\gamma^o\#}\mu$ and $\proj_{\gamma^o\#}\Phi(\mu)$ are atomic measures supported on
\[
\{[x_o,\tan^{-1}(\tan(t_j)\cos(d(x_o,x_{ij})))]:1\leq j\leq n,\ 1\leq i\leq n,\  1\leq i\leq k_{j}\},
\]
giving the same mass to points $[x_o,\tan^{-1}(\tan(t_j)\cos(d(x_o,x_{ij'})))]$ with $3\leq j\leq n$ and $1\leq i\leq k_{j}$. 
On the other hand,
\[
\proj_{\gamma^o\#}\mu([x_o,\tan^{-1}(\tan(t_j)\cos(d(x_o,x_{ij})))]) = \mu([x_{ij},t_j])
\]
and 
\[
\proj_{\gamma^o\#}\Phi(\mu)([x_o,\tan^{-1}(\tan(t_j)\cos(d(x_o,x_{ij})))]) = \Phi(\mu)([x_{ij},t_j])
\]
are different for some $j\in \{1,2\}$ and $1\leq i\leq k_j$, by equation \eqref{eq:intermediate-atomic-measure}. This is a contradiction, and therefore $\Phi(\mu)=\mu$.

For the general case, where $t_j=\pi/2$ is allowed, simply observe that any $\mu$ of the form \eqref{eq:atomic measure} can be approximated with measures of the form \eqref{eq:atomic measure} with $t_j\neq \pi/2$ for all $1\leq j\leq n$. The conclusion then follows from the continuity of $\Phi$.
\end{proof}

Theorem~\ref{t:rigidity of compact spaces} now follows.

\begin{proof}[Proof of theorem~\ref{t:rigidity of compact spaces}]
 The result follows from lemmas \ref{prop.invarianceDiracdeltas} and \ref{prop:invariance of atomic measures in general position}, and the fact that totally atomic measures of the form \eqref{eq:atomic measure} are dense in $\PP_p(\Susp(X))$.
\end{proof}

\medskip
\printbibliography 

\end{document}